\newtheorem{thm}{Theorem}[section]
\newtheorem{cor}[thm]{Corollary}
\newtheorem{prop}[thm]{Proposition}
\newtheorem{define}[thm]{Definition}
\newtheorem{lemma}[thm]{Lemma}
\newcommand{\p}{\partial}
\numberwithin{equation}{section}
\subjclass[2000]{35Q35, 35B35, 35B65, 76D03}
\keywords{supercritical Boussinesq equations, global regularity}
\begin{document}
\title[Supercritical 2D Boussinesq equations]
{The 2D Boussinesq equations with logarithmically supercritical velocities}
\author[Dongho Chae and Jiahong Wu]{Dongho Chae$^{1}$ and Jiahong Wu$^{2}$}
\address{$^1$ Department of Mathematics,
Sungkyunkwan University,
Suwon 440-746, Korea}

\email{chae@skku.edu}

\address{$^3$Department of Mathematics,
Oklahoma State University,
401 Mathematical Sciences,
Stillwater, OK 74078, USA.}

\email{jiahong@math.okstate.edu}

\begin{abstract}
This paper investigates the global (in time) regularity of solutions to a system of equations that generalize the vorticity formulation of the 2D Boussinesq-Navier-Stokes equations. The velocity $u$ in this system is related to the vorticity $\omega$ through the relations $u=\nabla^\perp \psi$ and $\Delta \psi = \Lambda^\sigma (\log(I-\Delta))^\gamma \omega$, which reduces to the standard velocity-vorticity relation when $\sigma=\gamma=0$. When either $\sigma>0$ or $\gamma>0$, the velocity $u$ is more singular. The ``quasi-velocity" $v$ determined by $\nabla\times v =\omega$ satisfies an equation of very special structure. This paper establishes the global regularity and uniqueness of solutions for the case when $\sigma=0$ and $\gamma\ge 0$. In addition, the vorticity $\omega$ is shown to be globally bounded in several functional settings such as $L^2$ for $\sigma>0$ in a suitable range.
\end{abstract}

\maketitle

\section{Introduction}

This paper aims at the global regularity problem on the generalized 2D Boussinesq equations
\begin{equation}\label{GBou}
\left\{
\begin{array}{l}
\displaystyle  \p_t \omega + u\cdot\nabla\omega + \Lambda \omega = \theta_{x_1},
\\
\displaystyle u =\nabla^\perp \psi \equiv (-\p_{x_2}, \p_{x_1}) \psi, \quad  \Delta \psi = \Lambda^\sigma (\log(I-\Delta))^\gamma \omega,
\\
\displaystyle  \p_t \theta + u\cdot \nabla \theta  =0,
\end{array}
\right.
\end{equation}
where $\omega=\omega(x,t)$, $\psi=\psi(x,t)$ and $\theta=\theta(x,t)$ are scalar functions of $x\in \mathbb{R}^2$ and $t\ge 0$, $u=u(x,t): \mathbb{R}^2\to \mathbb{R}^2$ is a vector field, $\sigma\ge 0$ and $\gamma\ge 0$ are real parameters, and $\Lambda=(-\Delta)^{\frac12}$ and $\Lambda^\sigma$ are Fourier multiplier operators with
$$
\widehat{\Lambda^\sigma f} (\xi) = |\xi|^\sigma \widehat{f}(\xi).
$$
For a given initial data
\begin{equation} \label{IV}
\omega(x,0) =\omega_0(x), \quad \theta(x,0) =\theta_0(x),
\end{equation}
we would like to determine whether or not the corresponding solution is global in time.
\vskip .1in
The model studied here can be regarded as a generalization of the vorticity formulation of the 2D Boussinesq equations
\begin{equation}\label{BouV}
\left\{
\begin{array}{l}
\displaystyle  \p_t u + u\cdot\nabla u =\nu \Delta u  -\nabla p
+ \theta \mathbf{e}_2,
\\
\displaystyle \nabla \cdot u =0
\\
\displaystyle  \p_t \theta + u\cdot \nabla \theta =\kappa \Delta \theta,
\end{array}
\right.
\end{equation}
where $\nu\ge 0$ and $\kappa\ge 0$ are real parameters and $\mathbf{e}_2=(0,1)$ is the unit vector in the $x_2$-direction. Boussinseq type equations model geophysical flows such as atmospheric fronts and ocean circulations (see, e.g., \cite{Maj,Pe}). Mathematically the 2D Boussinesq equations serve as a lower-dimensional model of the 3D hydrodynamics equations. In fact, the 2D Boussinesq equations retain some key features of the 3D Euler and Navier-Stokes equations such as the vortex stretching mechanism and, as pointed out in \cite{MB}, the inviscid 2D Boussinesq equations are identical to the Euler equations for the 3D axisymmetric swirling flows outside the symmetry axis. It is hoped that the study of the 2D Boussinesq equations may shed light on the global regularity problem concerning the 3D Euler and Navier-Stokes equations.

\vskip .1in
The global regularity problem for the 2D Boussinesq equations have been extensively studied and important progress has been made (see, e.g., \cite{AbHm,ACW10,ACW11,CaDi, CaoWu1,Ch,CV,DP1,DP2,DP3,HmKe1,HmKe2,HKR1,HKR2,HL,LLT,MX}).
When $\nu>0$, $\kappa>0$, (\ref{BouV}) with any sufficiently smooth data has a global solution (see, e.g., \cite{CaDi}). In the case of inviscid Boussinesq equations, namely (\ref{BouV}) with $\nu=\kappa=0$, the global regularity problem remains outstandingly open. The global regularity for the case $\nu>0$ and $\kappa=0$ was obtained by Chae \cite{Ch} and by Hou and Li \cite{HL}. The case when $\nu=0$ and $\kappa>0$ was dealt with in \cite{Ch}.  Their results successfully resolved one of the open problems proposed by Moffatt \cite{Mof}. Further progress on these two cases was recently made  by Hmidi, Keraani and Rousset, who were able to establish the global regularity even when the full Laplacian dissipation is replaced by the critical dissipation represented in terms of $\sqrt{-\Delta}$ (\cite{HKR1},\cite{HKR2}). The work of Hmidi, Keraani and Rousset was further generalized by Miao and Xue to accommodate both fractional dissipation and fractional thermal diffusion \cite{MX}. In a very recent preprint \cite{CV} Constantin and Vicol applied the nonlinear maximum principle for linear nonlocal operators to obtain another global regularity result when the fractional powers of the Laplacians for the dissipation and thermal diffusion obey certain conditions.  The global well-posedness for the anisotropic Boussinesq equations with horizontal dissipation or thermal diffusion was first studied by Danchin and Paicu \cite{DP3}. Recently Larios, Lunasin and Titi \cite{LLT} further investigated the Boussinesq equations with horizontal dissipation via more elementary approaches and re-established some results of Danchin and Paicu under milder assumptions. The global regularity problem for the 2D Boussinesq equations with vertical dissipation has been studied by Adhikari, Cao and Wu \cite{ACW10,ACW11} and was successively resolved by Cao and Wu \cite{CaoWu1}.

\vskip .1in
We first point out that the vorticity equation in (\ref{GBou}) does have a corresponding velocity formulation
\begin{equation}\label{vv0}
\p_t v + u\cdot\nabla v -\sum_{j=1}^2 u_j \nabla v_j +  \Lambda v=-\nabla p + \theta \mathbf{e}_2,
\end{equation}
where $v$ satisfies
$$
\nabla\cdot v =0, \quad u= \Lambda^\sigma (\log(I-\Delta))^\gamma v \quad\mbox{or} \quad \nabla\times v =\omega.
$$
When $\sigma=\gamma=0$, $u=v$ and (\ref{vv0}) reduces to the Boussinesq velocity equation after redefining the pressure by $p-\frac12 |u|^2$. The details of the derivation is left in the second section.

\vskip .1in
Our motivation for studying the global regularity of (\ref{GBou}) comes from two different sources: the first being the models generalizing the surface quasi-geostrophic equation and the 2D hydrodynamics equations (see, e.g., \cite{ChCW,ChCW2,CCD,CIW,DKV,Kinew1,MiXu2} and the second being the the Boussinesq-Navier-Stokes system with critical dissipation \cite{HKR1}. In a recent work \cite{HKR1} Hmidi, Keraani and Rousset successfully established the global regularity of the Boussinesq-Navier-Stokes system with critical dissipation, namely (\ref{GBou}) with $\sigma=0$ and $\gamma=0$. Their key idea is to consider the combined quantity
$$
G= \omega- \mathcal{R} \theta,
$$
which satisfies
\begin{equation}\label{ggg}
\partial_t G + u\cdot\nabla G + \Lambda G = - [\mathcal{R}, u\cdot\nabla] \theta.
\end{equation}
Here $\mathcal{R}=\Lambda^{-1}\partial_{x_1}$ stands for a Riesz transform and the brackets denote the commutator. The advantage of (\ref{ggg}) is that we can avoid evaluate the derivatives of $\theta$ when estimating the Lebesgue norm of $G$. This approach is also useful in the handling of the generalized Boussinesq equations (\ref{GBou}).

\vskip .1in
Our goal here is to extend their work to cover more singular velocities and explore how far one can go beyond the critical case.  When either $\sigma>0$ or $\gamma>0$, the corresponding velocity field $u$ is more singular. We are able to obtain the global regularity and uniqueness of solutions to (\ref{GBou}) for the special case when $\sigma=0$ and $\gamma>0$.

\begin{thm} \label{major1}
Consider the generalized Boussinesq equations (\ref{GBou}) with $\sigma=0$ and $\gamma\ge 0$. Assume the initial data $(\omega_0, \theta_0)$ satisfies $$
\omega_0\in L^2 \cap L^q \cap B^{0,\gamma}_{\infty,1}, \quad \theta_0\in L^2\cap B^{0,\gamma}_{\infty,1}
$$
for some $q > 2$. Then (\ref{GBou}) has a unique global solution $(\omega, \theta)$ satisfying, for any $t>0$,
$$
\omega\in L^2 \cap L^q \cap L^1_t B^{0,\gamma}_{\infty,1}, \quad \theta \in L^2\cap L^\infty \cap L^1_t B^{0,\gamma}_{\infty,1}.
$$
\end{thm}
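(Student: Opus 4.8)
The plan is to adapt the Hmidi--Keraani--Rousset device to the logarithmically supercritical setting. Writing $\mathcal{R}=\Lambda^{-1}\partial_{x_1}$ so that $\theta_{x_1}=\Lambda\mathcal{R}\theta$, I would introduce the combined quantity $G=\omega-\mathcal{R}\theta$; a short computation then shows that $G$ obeys the transport--diffusion equation (\ref{ggg}) with critical dissipation $\Lambda G$ and forcing $-[\mathcal{R},u\cdot\nabla]\theta$. The whole point of passing to $G$ is that this commutator carries no net derivative of $\theta$, so it can be controlled through $\theta$ and $u$ alone, whereas the raw forcing $\theta_{x_1}$ would cost a full derivative of $\theta$ that is not available. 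Note that the relation between $u$ and $\omega$ does not enter the derivation of the $G$-equation; the parameter $\gamma$ affects only how $u$ is bounded in terms of $\omega$.

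First I would assemble the elementary bounds. Since $u$ is divergence free and $\theta$ solves a pure transport equation, every norm $\|\theta(t)\|_{L^p}$ is conserved, so $\|\theta\|_{L^2}$ and $\|\theta\|_{L^\infty}$ (the latter via $B^{0,\gamma}_{\infty,1}\hookrightarrow L^\infty$) remain controlled for all time. Testing the $G$-equation against $G$ and against $|G|^{q-2}G$ and using the positivity of $\Lambda$ together with $\nabla\cdot u=0$ gives maximum-principle inequalities of the form $\frac{d}{dt}\|G\|_{L^p}\le\|[\mathcal{R},u\cdot\nabla]\theta\|_{L^p}$ for $p=2,q$; a Calder\'on-type commutator estimate bounds the right-hand side in terms of the conserved $\|\theta\|_{L^\infty}$ and velocity norms already dominated by $\|\omega\|_{L^p}=\|G+\mathcal{R}\theta\|_{L^p}$, so that (after a Gronwall argument, run jointly with the estimate below to break the mild circular dependence) one obtains $\omega\in L^\infty_t(L^2\cap L^q)$.

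The decisive ingredient is the estimate in the logarithmically weighted space $B^{0,\gamma}_{\infty,1}$, which is calibrated precisely so that the modified Biot--Savart law yields the Lipschitz bound $\|\nabla u\|_{L^\infty}\lesssim\|\omega\|_{B^{0,\gamma}_{\infty,1}}$: the order-zero operator $\nabla\nabla^\perp\Delta^{-1}$ acts boundedly on $B^0_{\infty,1}$, and $(\log(I-\Delta))^\gamma$ maps $B^{0,\gamma}_{\infty,1}$ into $B^0_{\infty,1}\hookrightarrow L^\infty$. Localizing the $G$-equation with the Littlewood--Paley projection $\Delta_j$ and invoking the frequency-localized maximum-principle (generalized Bernstein) inequality $\frac{d}{dt}\|\Delta_jG\|_{L^\infty}+c\,2^{j}\|\Delta_jG\|_{L^\infty}\le\|\Delta_j[\mathcal{R},u\cdot\nabla]\theta\|_{L^\infty}+\|[\Delta_j,u\cdot\nabla]G\|_{L^\infty}$, then multiplying by $(1+j)^\gamma$ and summing, I expect an inequality of the shape
\begin{equation*}
\frac{d}{dt}\|G\|_{B^{0,\gamma}_{\infty,1}}+c\,\|G\|_{B^{1,\gamma}_{\infty,1}}\lesssim \|\nabla u\|_{L^\infty}\bigl(\|G\|_{B^{0,\gamma}_{\infty,1}}+\|\theta\|_{B^{0,\gamma}_{\infty,1}}\bigr)+\text{(terms in the controlled }L^p\text{ norms)},
\end{equation*}
together with the transport bound $\|\theta(t)\|_{B^{0,\gamma}_{\infty,1}}\lesssim\|\theta_0\|_{B^{0,\gamma}_{\infty,1}}\exp\bigl(C\int_0^t\|\nabla u\|_{L^\infty}\,ds\bigr)$, the commutators being handled by a logarithmically refined version of the HKR commutator lemma.

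The \emph{main obstacle} is closing this coupled system, and it is exactly here that the criticality of the dissipation is used. Since $\|\nabla u\|_{L^\infty}$ is itself comparable to $\|\omega\|_{B^{0,\gamma}_{\infty,1}}$, the right-hand side above is quadratic in the norm being estimated, so a direct Gronwall argument would only furnish a finite existence time. The resolution is that the theorem asks merely for $\int_0^T\|\omega\|_{B^{0,\gamma}_{\infty,1}}\,dt<\infty$, equivalently the time-integrability $\int_0^T\|\nabla u\|_{L^\infty}\,dt<\infty$ needed to propagate every norm; and the order-one gain $\|G\|_{B^{1,\gamma}_{\infty,1}}$ supplied by the critical dissipation dominates $\|G\|_{B^{0,\gamma}_{\infty,1}}$ above the low frequencies already bounded in $L^q$. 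I expect that combining this gain with the $L^\infty$ bound on $\theta$ and an interpolation against the $L^q$ bound on $\omega$ reduces the estimate to a logarithmic Gronwall inequality of Osgood type, whose solutions are global; respecting the weight $(1+j)^\gamma$ uniformly in $j$ while carrying out this absorption is the delicate technical heart of the argument, and is precisely what confines the method to $\sigma=0$. Once $\int_0^T\|\nabla u\|_{L^\infty}\,dt<\infty$ is secured, global existence follows by proving these same bounds, uniformly, for a regularized system (mollified data and Biot--Savart operator) and passing to the limit, while uniqueness follows from a low-regularity difference estimate: taking $\delta\theta$ in $H^{-1}$, writing $\delta u\cdot\nabla\theta_2=\nabla\cdot(\theta_2\,\delta u)$ to avoid any derivative of $\theta_2$ and using $\theta_2\in L^\infty$, and $\delta G$ in $L^2$, which closes by Gronwall since $u_1,u_2\in L^1_t\mathrm{Lip}$.
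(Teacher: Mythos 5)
Your overall architecture (pass to $G=\omega-\mathcal{R}\theta$, get $L^2$ and $L^q$ bounds, then a Besov bound giving $\nabla u\in L^1_tL^\infty$, then uniqueness in negative-regularity spaces) matches the paper's, but the decisive step is where your plan breaks down. You propose to estimate $G$ directly in $B^{0,\gamma}_{\infty,1}$ via a frequency-localized $L^\infty$ maximum principle, in which the transport commutator $[\Delta_j,u\cdot\nabla]G$ is paid for by $\|\nabla u\|_{L^\infty}\sim\|\omega\|_{B^{0,\gamma}_{\infty,1}}$. As you note, this makes the right-hand side quadratic in the unknown, and coupling it with your \emph{exponential} transport bound $\|\theta(t)\|_{B^{0,\gamma}_{\infty,1}}\lesssim\|\theta_0\|_{B^{0,\gamma}_{\infty,1}}\exp(C\int_0^t\|\nabla u\|_{L^\infty})$ produces an inequality of the type $\dot Z\lesssim Z^2+Z e^{C\int Z}$. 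This is \emph{not} of Osgood type: Osgood's lemma requires $\int_0 dr/\mu(r)=\infty$, which fails for $\mu(r)=r^2$ or $re^{cr}$, so the claimed ``logarithmic Gronwall'' closure is unsubstantiated and, as written, only yields a finite existence time. The hope that the dissipative gain $\|G\|_{B^{1,\gamma}_{\infty,1}}$ absorbs the quadratic term does not survive inspection either, since the dangerous coefficient $\|\nabla u\|_{L^\infty}$ is not small and is exactly the quantity you are trying to bound.

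The paper's resolution is different in two essential respects. First, the smoothing estimate for $G$ is proved not in $L^\infty$ but in $\widetilde{L}^r_t B^s_{q,1}$ for a \emph{finite} $q$ with $2<q<4$ (Theorem \ref{ltb}): at finite $q$ the commutator $[\Delta_j,u\cdot\nabla]G$ is controlled by $\|\omega\|_{L^q}$, which is already globally bounded from the earlier step (Theorem \ref{p1b}), so the resulting inequality is \emph{linear} in $\|G\|_{\widetilde L^r_tB^s_{q,1}}$ and closes by splitting frequencies at a large $N$ and absorbing the tail; the $L^\infty$-type bound $\|G\|_{L^1_tB^{0,\gamma}_{\infty,1}}$ then follows for free from Bernstein since $s>2/q$. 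Second, the transport estimate for $\theta$ in $B^{0,\gamma}_{\infty,1}$ (Lemma \ref{ggd}) is \emph{linear}, not exponential, in $\int_0^t\|\nabla u\|_{L^\infty}$ — this refinement (obtained by decomposing $\theta=\sum_k\theta_k$ and exploiting the decay $2^{-\frac12|j-k|}$) is precisely what turns the final coupled estimate for $Z(t)=\|\omega\|_{L^1_tB^{0,\gamma}_{\infty,1}}$ into a genuinely linear Gronwall inequality (Theorem \ref{tim}). A secondary issue: your claim that $\frac{d}{dt}\|G\|_{L^q}\le\|[\mathcal{R},u\cdot\nabla]\theta\|_{L^q}$ closes using only the conserved $\|\theta\|_{L^\infty}$ is also too optimistic, since $[\mathcal{R},u\cdot\nabla]\theta=\nabla\cdot[\mathcal{R},u]\theta$ carries a full derivative and the commutator gain is only available for regularity strictly below one; the paper either uses the dissipation to work at sub-unit regularity (which restricts $q\le 4$ in Theorem \ref{p1b}) or pays $\|\theta\|_{B^{0,\gamma}_{\infty,1}}$ (Proposition \ref{clog}), which is only available after the Besov bound has been secured.
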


Here $B^{0,\gamma}_{\infty,1}$ is a space of Besov type and its definition is provided in the Appendix. Although it is not clear if this global regularity result still holds for the more singular case when $\sigma>0$, we can still show that the $L^2$-norm of the vorticity $\omega$ is bounded at any time for $0\le \sigma <\frac12$ and $\gamma\ge 0$. More precisely, we have the following theorem.
\begin{thm} \label{globalw}
Consider (\ref{GBou}) with $0\le \sigma <\frac12$ and $\gamma \ge 0$. Assume $(\omega_0, \theta_0)$ satisfies the conditions stated in Theorem \ref{major1}.
Let $(\omega, \theta)$ be the corresponding solution. Then, for any $t>0$,
$$
\|\omega(t)\|_{L^2} \le B(t)
$$
for a smooth function $B(t)$ of $t$ depending on the initial data only. In addition, $G$ satisfies the basic energy bound
\begin{equation}\label{Gbd}
\|G(t)\|_{L^2}^2 + \int_0^t \|\Lambda^{\frac12} G\|_{L^2}^2 dt \le B(t).
\end{equation}
\end{thm}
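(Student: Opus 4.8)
The plan is to reduce everything to an $L^2$ bound for the combined quantity $G=\omega-\mathcal{R}\theta$ governed by \eqref{ggg}, and then to recover $\omega$ from $\omega=G+\mathcal{R}\theta$. Two facts will be used repeatedly. First, since $\theta$ is transported by the divergence-free field $u$, all its Lebesgue norms are conserved, $\|\theta(t)\|_{L^p}=\|\theta_0\|_{L^p}$, so by the hypotheses $\theta\in L^\infty_t(L^2\cap L^\infty)$. Second, $G_0=\omega_0-\mathcal{R}\theta_0\in L^2$ because $\mathcal{R}$ is bounded on $L^2$. Thus a bound for $\|G(t)\|_{L^2}$ immediately yields $\|\omega(t)\|_{L^2}\le\|G(t)\|_{L^2}+\|\mathcal{R}\theta_0\|_{L^2}$, and the whole theorem follows from a single energy estimate on $G$.

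Testing \eqref{ggg} against $G$, the transport term vanishes because $\nabla\cdot u=0$ and $\int(\Lambda G)\,G\,dx=\|\Lambda^{1/2}G\|_{L^2}^2$, so
\begin{equation*}
\frac12\,\ddt\|G\|_{L^2}^2+\|\Lambda^{1/2}G\|_{L^2}^2=-\int\big([\mathcal{R},u\cdot\nabla]\theta\big)\,G\,dx.
\end{equation*}
The dissipation furnishes exactly one half-derivative, which I would spend by pairing the commutator with $G$ in the $\dot{H}^{-1/2}$--$\dot{H}^{1/2}$ duality, using $\|G\|_{\dot{H}^{1/2}}=\|\Lambda^{1/2}G\|_{L^2}$ and Young's inequality:
\begin{equation*}
\Big|\int\big([\mathcal{R},u\cdot\nabla]\theta\big)\,G\,dx\Big|\le\tfrac12\|\Lambda^{1/2}G\|_{L^2}^2+\tfrac12\big\|[\mathcal{R},u\cdot\nabla]\theta\big\|_{\dot{H}^{-1/2}}^2.
\end{equation*}
Everything then rests on a commutator estimate of HKR type, $\|[\mathcal{R},u\cdot\nabla]\theta\|_{\dot{H}^{-1/2}}\lesssim\|u\|_{\dot{H}^{1/2}}\,\|\theta\|_{L^\infty}$, which I would prove by a Bony decomposition: the bilinear symbol $i\eta\,(m(\xi)-m(\eta))$ of the commutator, with $m(\zeta)=i\zeta_1/|\zeta|$, is smooth off the diagonal and of order one, and Coifman--Meyer type estimates distribute the lost derivative onto $u$, leaving only $\|\theta\|_{L^\infty}$.

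The decisive step, which I expect to be the main obstacle, is to convert $\|u\|_{\dot{H}^{1/2}}$ into a bound by $\|\omega\|_{L^2}$; this is exactly where $\sigma<\tfrac12$ is used. From $\Delta\psi=\Lambda^\sigma(\log(I-\Delta))^\gamma\omega$ and $u=\nabla^\perp\psi$, the multiplier sending $\omega$ to $u$ has size $|\xi|^{\sigma-1}(\log(1+|\xi|^2))^\gamma$, so $\Lambda^{1/2}u$ corresponds to $|\xi|^{\sigma-1/2}(\log(1+|\xi|^2))^\gamma$. On high frequencies the negative power $\sigma-\tfrac12<0$ dominates the logarithm, giving $\|\Lambda^{1/2}u\|_{L^2(|\xi|\ge1)}\lesssim\|\omega\|_{L^2}$; were $\sigma\ge\tfrac12$ this would fail and the whole scheme would break down. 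The low-frequency part, where the multiplier is singular, is the one genuinely two-dimensional subtlety, and I would control it using the additional integrability built into the function spaces (the role of the exponent $q>2$) together with the conserved Lebesgue norms of $\theta$. Combining, $\|u\|_{\dot{H}^{1/2}}\le C\|\omega\|_{L^2}\le C(\|G\|_{L^2}+\|\theta_0\|_{L^2})$.

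Assembling the pieces and using that $\|\theta(t)\|_{L^\infty}=\|\theta_0\|_{L^\infty}$, the energy identity collapses to
\begin{equation*}
\ddt\|G\|_{L^2}^2+\|\Lambda^{1/2}G\|_{L^2}^2\le C_1\|G\|_{L^2}^2+C_2,
\end{equation*}
with $C_1,C_2$ depending only on $\|\theta_0\|_{L^2\cap L^\infty}$. Gronwall's inequality then produces a smooth function $B(t)$ with $\|G(t)\|_{L^2}^2\le B(t)$, and integrating in time yields the dissipation bound \eqref{Gbd}. Finally $\|\omega(t)\|_{L^2}\le\|G(t)\|_{L^2}+\|\mathcal{R}\theta(t)\|_{L^2}\le B(t)+C\|\theta_0\|_{L^2}$, which is the assertion after absorbing constants into $B$. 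I would run the whole computation on smooth approximations and pass to the limit, so that the a priori estimate is fully justified; for the borderline case $\sigma=0$ one may avoid the $\dot{H}^{-1/2}$ pairing altogether and work directly in $L^2$, since there $\|\nabla u\|_{L^2}=\|\omega\|_{L^2}$.
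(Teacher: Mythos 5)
Your overall architecture coincides with the paper's: an $L^2$ energy estimate for $G$ from \eqref{ggg}, the half-derivative of dissipation spent in an $\dot H^{1/2}$--$\dot H^{-1/2}$ pairing against the commutator, the reduction $\|\omega\|_{L^2}\le\|G\|_{L^2}+\|\theta_0\|_{L^2}$, and Gronwall. The difficulty is in the step you yourself flag as the main obstacle, and your proposed resolution does not work. The bound $\|u\|_{\dot H^{1/2}}\le C\|\omega\|_{L^2}$ is false for \emph{every} $\sigma\in[0,\tfrac12)$, not only for $\sigma\ge\tfrac12$: the multiplier taking $\omega$ to $\Lambda^{1/2}u$ has size $|\xi|^{\sigma-1/2}(\log(1+|\xi|^2))^\gamma$, which is singular at $\xi=0$ precisely because $\sigma-\tfrac12<0$. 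Your suggested fix --- invoking the extra integrability $\omega\in L^q$ with $q>2$ --- points in the wrong direction: by Bernstein/Hardy--Littlewood--Sobolev, controlling $\Lambda^{\sigma-1/2}\omega$ at low frequencies in $L^2$ requires $\omega\in L^{p}$ with $\frac1p=\frac12+\frac{1/2-\sigma}{2}>\frac12$, i.e.\ $p<2$, which is not among the hypotheses. So the commutator estimate $\|[\mathcal{R},u\cdot\nabla]\theta\|_{\dot H^{-1/2}}\lesssim\|u\|_{\dot H^{1/2}}\|\theta\|_{L^\infty}$, with $u$ measured globally in $\dot H^{1/2}$ and $\theta$ in $L^\infty$, cannot be closed in terms of $\|\omega\|_{L^2}$.

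The paper's Proposition \ref{pp1} is designed exactly to quarantine this low-frequency obstruction: the paraproduct decomposition keeps the commutator structure only for the blocks $j\ge 3$, where $\|S_{k-1}u\|_{\mathring B^{\delta}_{p,\infty}}$ with $\delta<1-\sigma$ is controlled by $\|\omega\|_{L^p}$ (no low-frequency divergence, since positive-index homogeneous Besov norms of $S_{k-1}u$ are harmless), while the finitely many low-frequency blocks are estimated \emph{without} the commutator structure as $\sum_{j=-1}^{3}\|\Delta_j u\,\Delta_j\theta\|_{L^p}$. There the key move is to never put $\theta$ in $L^\infty$: one uses H\"older with $\theta\in L^{p_2}$, $p_2=\tfrac{2}{1-\sigma}<\infty$ (conserved by transport since $\theta_0\in L^2\cap L^\infty$), so that $\Delta_j u$ only needs to be controlled in $L^{q_1}$ with $q_1>2$, which Hardy--Littlewood--Sobolev gives from $\|\Lambda^{\sigma-1}\omega\|_{L^{q_1}}\le C\|\omega\|_{L^2}$. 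A secondary point: even for the high-frequency part, the paper's estimate requires the strict inequality $s<\delta$ for the paraproduct sums (the $I_2$ and $I_3$ terms) to converge, and correspondingly measures $\theta$ in the negative-regularity norm $B^{s-\delta}_{\infty,q}$ (embedded from $L^{p_3}$) rather than $L^\infty$; your exactly-balanced Coifman--Meyer pairing at matching indices would be borderline. Your closing remark that for $\sigma=0$ one can ``work directly in $L^2$ since $\|\nabla u\|_{L^2}=\|\omega\|_{L^2}$'' also does not rescue the argument, because $\nabla u$ carries one full derivative whereas the dissipation only supplies half.
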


Further regularity can also be established for certain $\sigma>0$. In fact, $\|\omega\|_{L^q}$ for $q\in (2, \frac4{1+2\sigma}]$ is also globally bounded
in time when $0\le \sigma <\frac12$ and $\gamma \ge 0$ ($q\not =\frac4{1+2\sigma}$
when $\gamma>0$). In addition, for $0\le \sigma<\frac14$ and $\gamma\ge 0$,
the space-time norm
$\widetilde{L}^r_t B^{s}_{q,1}$ of $G$ is also bounded for any $t>0$. The precise statement is given in Theorem \ref{ltb}. This bound especially implies that $G$ is in $L^1_tL^\infty_x$. However, we need to assume $\sigma=0$ in order to obtain the global bounds for $\omega$ and $\theta$ in  $L^1_tL^\infty_x$.

\vskip .1in
The rest of this paper is divided into four sections. The second section derives the velocity formulation of a generalized Boussinesq vorticity equation. The third section proves the global $L^2$ vorticity bound stated in Theorem \ref{globalw}. It requires a commutator estimate involving the Riesz transform $\mathcal{R}$. Section \ref{exist} proves the aforementioned global regularity bounds and part of Theorem \ref{major1} while Section \ref{uniq} establishes the uniqueness part of Theorem \ref{major1}. Throughout the rest of this paper, $B(t)$'s denote bounds that depend on $t$ and the initial data.

\vskip .4in
\section{Derivation of the velocity equation}

This section derives the velocity formulation for the
generalized 2D Boussineq vorticity equation given by
\begin{equation} \label{gbb}
\left\{
\begin{array}{l}
\displaystyle  \p_t \omega + u\cdot\nabla\omega + \nu \Lambda^\alpha \omega = \theta_{x_1},
\\
\displaystyle u =\nabla^\perp \psi \equiv (-\p_{x_2}, \p_{x_1}) \psi, \quad  \Delta \psi = P(\Lambda)\omega,
\\
\displaystyle  \p_t \theta + u\cdot \nabla \theta + \kappa \Lambda^\beta \theta =0
\end{array}
\right.
\end{equation}
where $\nu\ge 0$, $\kappa\ge 0$, $0<\alpha\le 1$, $0<\beta\le 1$ are real parameters, and $P(\Lambda)$ is a Fourier multiplier operator  with
$$
 \widehat{P(\Lambda) f} (\xi) = P(|\xi|) \widehat{f}(\xi).
$$
Clearly, (\ref{GBou}) is a special case of (\ref{gbb}). A special consequence of Theorem \ref{ved} below is the derivation of (\ref{vv0}).

\begin{thm} \label{ved}
For classical solutions of \eqref{gbb} that decay sufficiently fast as $|x|\to \infty$, \eqref{gbb} is equivalent to the following equations
\begin{equation}\label{vueq}
\left\{
\begin{array}{l}
\displaystyle \p_t v + u^\perp (\nabla^\perp \cdot v) + \nu \Lambda^\alpha v=-\nabla p + \theta \mathbf{e}_2,
\\
\displaystyle \nabla \cdot v=0,\quad u = P(\Lambda) v,
\\
\displaystyle  \p_t \theta + u\cdot \nabla \theta + \kappa \Lambda^\beta \theta =0.
\end{array}
\right.
\end{equation}
In addition, the equation for $v$ can be written in the more familiar form
\begin{equation}\label{vv}
\p_t v + u\cdot\nabla v -\sum_{j=1}^2 u_j \nabla v_j + \nu \Lambda^\alpha v=-\nabla p + \theta \mathbf{e}_2.
\end{equation}
\end{thm}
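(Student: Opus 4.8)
The plan is to introduce the quasi-velocity $v$ explicitly and then pass between the two systems by taking a curl in one direction and recovering a pressure in the other. First I would define $v$ directly from the vorticity: since $\Delta\psi=P(\Lambda)\omega$, set $\phi=\Delta^{-1}\omega$ and $v=\nabla^\perp\phi$. A one-line computation gives $\nabla\cdot v=\nabla\cdot\nabla^\perp\phi=0$ and $\nabla^\perp\cdot v=\Delta\phi=\omega$, so $v$ is the divergence-free field whose scalar curl is $\omega$. Because $P(\Lambda)$, $\Delta^{-1}$ and $\nabla^\perp$ are Fourier multipliers they commute, so $\psi=\Delta^{-1}P(\Lambda)\omega=P(\Lambda)\phi$ and $u=\nabla^\perp\psi=P(\Lambda)\nabla^\perp\phi=P(\Lambda)v$. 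This gives the second line of \eqref{vueq}; moreover $\nabla\cdot u=P(\Lambda)(\nabla\cdot v)=0$, a fact used repeatedly below.

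For the implication from \eqref{vueq} to \eqref{gbb} I would apply $\nabla^\perp\cdot$ to the first equation of \eqref{vueq}. The curl annihilates $\nabla p$, sends $\theta\mathbf{e}_2$ to $\theta_{x_1}$, and commutes with $\p_t$ and $\Lambda^\alpha$, so the only substantive term is $\nabla^\perp\cdot(u^\perp\,\nabla^\perp\cdot v)=\nabla^\perp\cdot(u^\perp\omega)$. Writing $u^\perp=(-u_2,u_1)$ and expanding, this equals $u\cdot\nabla\omega+\omega\,\nabla\cdot u$, whose second term drops because $\nabla\cdot u=0$. Thus the curl of the $v$-equation is exactly $\p_t\omega+u\cdot\nabla\omega+\nu\Lambda^\alpha\omega-\theta_{x_1}=0$, the vorticity equation, while the $\theta$-equation is carried over verbatim.

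The reverse implication is where the only real subtlety lies, namely the construction of the pressure. With $v,u$ defined as above from a solution of \eqref{gbb}, set $F=\p_t v+u^\perp(\nabla^\perp\cdot v)+\nu\Lambda^\alpha v-\theta\mathbf{e}_2$; the computation just described shows $\nabla^\perp\cdot F$ equals the left-hand side of the vorticity equation minus $\theta_{x_1}$, hence $\nabla^\perp\cdot F=0$. A curl-free field is a gradient: in the Helmholtz decomposition $F=\nabla q+\nabla^\perp h$ one gets $\Delta h=\nabla^\perp\cdot F=0$, and the assumed fast decay forces $h$ to be constant, so $F=\nabla q$ and $p=-q$ produces the first equation of \eqref{vueq}. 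This is the step that genuinely uses the decay hypothesis, and I expect it to be the main (if mild) obstacle; the rest is bookkeeping.

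Finally, to pass from \eqref{vueq} to the familiar form \eqref{vv} I would verify the pointwise identity $u^\perp(\nabla^\perp\cdot v)=u\cdot\nabla v-\sum_{j=1}^2 u_j\nabla v_j$ by comparing components: the first component of the right-hand side collapses to $-u_2(\p_{x_1}v_2-\p_{x_2}v_1)=-u_2\,\omega$ and the second to $u_1\,\omega$, which is exactly $u^\perp\omega$. Since no extra gradient is generated, \eqref{vueq} and \eqref{vv} share the same pressure, and the two forms coincide.
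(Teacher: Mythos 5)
Your argument is correct, and the setup (defining $v=\nabla^\perp\Delta^{-1}\omega$, checking $\nabla\cdot v=0$, $\nabla^\perp\cdot v=\omega$, $u=P(\Lambda)v$, and the componentwise verification of $u^\perp(\nabla^\perp\cdot v)=u\cdot\nabla v-\sum_j u_j\nabla v_j$) matches the paper. Where you genuinely diverge is in how the pressure is produced. The paper applies $\nabla^\perp\Delta^{-1}$ directly to the vorticity equation and computes the two components of $\nabla^\perp(u\cdot\nabla\omega)$ by hand, isolating a full Laplacian $\Delta(u^\perp\omega)$ plus pure gradient terms; this yields \eqref{t1} and the explicit pressure formula \eqref{p1}, $p=-\Delta^{-1}\left(\nabla\cdot(u^\perp\,\nabla^\perp\cdot v)-\p_{x_2}\theta\right)$, in one stroke, with the reverse implication dispatched by the remark that \eqref{p1} is forced by taking the divergence of the $v$-equation. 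You instead take the scalar curl of the residual $F=\p_t v+u^\perp(\nabla^\perp\cdot v)+\nu\Lambda^\alpha v-\theta\mathbf{e}_2$, observe via $\nabla^\perp\cdot(u^\perp\omega)=\nabla\cdot(u\omega)=u\cdot\nabla\omega$ that $\nabla^\perp\cdot F=0$, and then invoke the Helmholtz decomposition together with the decay hypothesis to conclude $F$ is a gradient. Your route is more economical, treats the two directions of the equivalence symmetrically, and makes explicit exactly where the decay assumption enters (ruling out the harmonic part); the paper's route buys the closed-form expression for $p$, which your existence argument does not directly supply, though it is recoverable afterwards by taking the divergence. Both are complete proofs.
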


\begin{proof}
It follows from the second equation in (\ref{gbb}) that
$$
u =\nabla^\perp \Delta^{-1} P(\Lambda) \omega, \quad \nabla\times u = \nabla^\perp \cdot u = \Delta \psi =P(\Lambda)\omega.
$$
Therefore, if we set
\begin{equation}\label{vu}
v = P(\Lambda)^{-1} u,
\end{equation}
then
\begin{equation}\label{v1}
v=\nabla^\perp \Delta^{-1}\omega \quad\mbox{and}\quad  \omega = P(\Lambda)^{-1} \nabla\times u  = \nabla\times v.
\end{equation}
Applying $\nabla^\perp \Delta^{-1}$ to the first equation in (\ref{gbb}), we obtain
$$
\p_t v + \Delta^{-1} \nabla^\perp (u\cdot\nabla \omega) + \nu \Lambda^\alpha v
= \Delta^{-1} \nabla^\perp \theta_{x_1}.
$$
To rewrite the nonlinear term, we consider the components of $\nabla^\perp (u\cdot\nabla \omega)$:
\begin{eqnarray*}
-\p_{x_2}  (u\cdot\nabla \omega) &=& -\p_{x_2} (\nabla \cdot(u \omega))\\
&=&-\p_{x_2}(\p_{x_1}(u_1 \omega) + \p_{x_2}(u_2 \omega))\\
&=&\p_{x_1}(-\p_{x_2}(u_1 \omega)) -\Delta (u_2 \omega) + \p_{x_1} (\p_{x_1}(u_2 \omega))\\
&=&-\Delta (u_2 \omega) + \p_{x_1}(-\p_{x_2}(u_1 \omega) +\p_{x_1}(u_2 \omega)),
\end{eqnarray*}
\begin{eqnarray*}
\p_{x_1}  (u\cdot\nabla \omega) &=& \p_{x_1} (\nabla \cdot(u \omega))\\
&=&\p_{x_1}(\p_{x_1}(u_1 \omega) + \p_{x_2}(u_2 \omega))\\
&=&\p_{x_1}\p_{x_1}(u_1 \omega) + \p_{x_2}(\p_{x_1}(u_2 \omega))\\
&=&\Delta(u_1\omega) +  \p_{x_2}(\p_{x_1}(u_2 \omega)-\p_{x_2}(u_1 \omega)).
\end{eqnarray*}
That is,
\begin{equation}\label{t1}
\Delta^{-1} \nabla^\perp (u\cdot\nabla \omega) = u^\perp \omega - \Delta^{-1} \nabla (\nabla\cdot(u^\perp \omega)).
\end{equation}
In addition,
$$
\nabla^\perp \theta_{x_1}
= \left( \begin{array}{c} -\partial_{x_1}\p_{x_2} \theta \\ \p_{x_1}^2 \theta \end{array}
\right) =  \left( \begin{array}{c} \partial_{x_1}(-\p_{x_2} \theta) \\\partial_{x_2}(-\p_{x_2} \theta)  \end{array}
\right) + \left( \begin{array}{c} 0 \\ \Delta \theta \end{array}
\right)
$$
and
\begin{equation}\label{t2}
\Delta^{-1} \nabla^\perp \theta_{x_1} = \theta \mathbf{e}_2 + \Delta^{-1}  \nabla (-\p_{x_2}\theta).
\end{equation}
Inserting (\ref{t1}) and (\ref{t2}) in (\ref{v1}), we obtain, after noting $\omega=\nabla^\perp\cdot v$
\begin{equation} \label{b_veq}
\p_t v + u^\perp (\nabla^\perp \cdot v) + \nu \Lambda^\alpha v=-\nabla p + \theta \mathbf{e}_2
\end{equation}
where
\begin{equation}\label{p1}
p =-\Delta^{-1}\left(\nabla \cdot(u^\perp\nabla^\perp \cdot v)-\p_{x_2}\theta\right).
\end{equation}
Clearly, \eqref{p1} is a simple consequence of \eqref{b_veq} with $\nabla\cdot v=0$. We can rewrite the nonlinear term into more familiar form.  Inserting the identity
$$
 u^\perp (\nabla^\perp \cdot v) = u\cdot\nabla v - \sum_{j=1}^2 u_j \nabla v_j
$$
in \eqref{b_veq}, we find
\begin{equation} \label{b_veqf}
\p_t v + u\cdot\nabla v -\sum_{j=1}^2 u_j \nabla v_j + \nu \Lambda^\alpha v=-\nabla p + \theta \mathbf{e}_2.
\end{equation}
(\ref{vueq}) is a combination of (\ref{b_veq}), (\ref{p1}) and the last equation in (\ref{gbb}). (\ref{vv}) is just (\ref{b_veqf}). This completes the proof of Theorem \ref{ved}.
\end{proof}

\vskip .4in
\section{Global (in time) bound for $\|\omega\|_{L^2}$}
\label{sec:l2w}

This section proves Theorem \ref{globalw}, the global {\it a priori} ${L^2}$-bound for the vorticity $\omega$. To do so, one considers the equation for $G=\omega -\mathcal{R }\theta$,
\begin{equation}\label{Geq}
\partial_t G + u\cdot\nabla G + \Lambda G = - [\mathcal{R}, u\cdot\nabla] \theta.
\end{equation}
Clearly, in order to control $\|G\|_{L^2}$, we need a bound for the commutator $[\mathcal{R}, u\cdot\nabla] \theta$.  For this purpose, we start with the following lemma.
\begin{lemma} \label{newl}
Let $p\in [1,\infty]$ and $\delta\in (0,1)$. If $|x|^\delta \phi\in L^1$, $f\in \mathring{B}^\delta_{p,\infty}$ and $g\in L^\infty$, then
\begin{equation}\label{pfg}
\|\phi\ast (fg) -f (\phi\ast g)\|_{L^p} \le C \||x|^\delta \phi\|_{L^1} \|f\|_{\mathring{B}^\delta_{p,\infty}}  \|g\|_{L^\infty}.
\end{equation}
In the case when $\delta=1$, (\ref{pfg}) is replaced by
\begin{equation}\label{pfg1}
\|\phi\ast (fg) -f (\phi\ast g)\|_{L^p} \le C \||x| \phi\|_{L^1} \|\nabla f\|_{L^p} \|g\|_{L^\infty}.
\end{equation}
\end{lemma}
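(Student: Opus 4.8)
The plan is to exploit the fact that the commutator collapses to a single convolution integral against the \emph{difference} of $f$. Writing out the convolutions pointwise, the term $f(\phi\ast g)(x)=f(x)\int \phi(y)\,g(x-y)\,dy$ can be absorbed into the first term, so that
\begin{equation}\label{comm-rewrite}
\bigl[\phi\ast(fg)-f(\phi\ast g)\bigr](x)=\int \phi(y)\,\bigl[f(x-y)-f(x)\bigr]\,g(x-y)\,dy.
\end{equation}
First I would take absolute values in \eqref{comm-rewrite} and use $g\in L^\infty$ to bound the integrand by $|\phi(y)|\,|f(x-y)-f(x)|\,\|g\|_{L^\infty}$. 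The point of this reformulation is that all the regularity demanded of $f$ now enters only through its $L^p$ modulus of continuity, and the kernel $\phi$ enters only through its weighted $L^1$ size.

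Next I would take the $L^p$ norm in $x$ and apply Minkowski's integral inequality to move the norm inside the $dy$ integral, giving
\begin{equation}\label{mink-step}
\|\phi\ast(fg)-f(\phi\ast g)\|_{L^p}\le \|g\|_{L^\infty}\int |\phi(y)|\,\|f(\cdot-y)-f(\cdot)\|_{L^p}\,dy.
\end{equation}
This isolates the quantity $\|f(\cdot-y)-f\|_{L^p}$, the translation difference of $f$ in $L^p$. For $\delta\in(0,1)$ the decisive input is the equivalent characterization of the homogeneous Besov space by finite differences,
$$
\|f\|_{\mathring{B}^\delta_{p,\infty}}\approx \sup_{y\neq 0}\frac{\|f(\cdot-y)-f\|_{L^p}}{|y|^\delta},
$$
so that $\|f(\cdot-y)-f\|_{L^p}\le C|y|^\delta\|f\|_{\mathring{B}^\delta_{p,\infty}}$. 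Substituting into \eqref{mink-step} and recognizing $\int |y|^\delta|\phi(y)|\,dy=\||x|^\delta\phi\|_{L^1}$ yields \eqref{pfg} at once.

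For the endpoint $\delta=1$ the first-difference characterization is unavailable (the space $\mathring{B}^1_{p,\infty}$ is pitched in terms of second differences), so instead I would estimate the translation difference directly by the fundamental theorem of calculus: $f(x-y)-f(x)=-\int_0^1 y\cdot\nabla f(x-ty)\,dt$, whence $\|f(\cdot-y)-f\|_{L^p}\le |y|\,\|\nabla f\|_{L^p}$ after another use of Minkowski and the translation invariance of the $L^p$ norm. Inserting this into \eqref{mink-step} produces \eqref{pfg1} with the weight $\||x|\phi\|_{L^1}$. I expect the only genuine obstacle to be the verification of the difference characterization of $\mathring{B}^\delta_{p,\infty}$ for $\delta\in(0,1)$, which must be reconciled with the Littlewood--Paley definition adopted in the Appendix; everything else (Minkowski, the pointwise reduction, the FTC estimate) is routine, and the convergence of all integrals is guaranteed by the hypotheses $|x|^\delta\phi\in L^1$, $f\in\mathring{B}^\delta_{p,\infty}$, and $g\in L^\infty$.
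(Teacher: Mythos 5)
Your proposal is correct and follows essentially the same route as the paper: rewrite the commutator as $\int\phi(y)[f(x-y)-f(x)]g(x-y)\,dy$, apply Minkowski's integral inequality, pull out $\|g\|_{L^\infty}$, and invoke the finite-difference (modulus of continuity) characterization of $\mathring{B}^\delta_{p,\infty}$, which the paper records in its Appendix as an equivalent norm for $s\in(0,1)$. Your fundamental-theorem-of-calculus treatment of the endpoint $\delta=1$ is also the standard argument (the paper simply cites this case from Hmidi--Keraani--Rousset).
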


$\mathring{B}^\delta_{p,\infty}$ here denotes a homogeneous Besov space, which is defined in the Appendix. (\ref{pfg1}) was previously obtained in Lemma 3.2 of \cite[p.2153]{HKR1}. Our extension to cover the case for $\delta\in (0,1)$ is necessary in order to deal with the generalized Boussinesq equations (\ref{GBou}). Since now the velocity field $u$ is more singular, namely
$$
u= \nabla^\perp \Delta^{-1} \Lambda^\sigma (\log(I-\Delta))^\gamma \omega,
$$
it is necessary to consider the fractional derivative $\Lambda^{1-\sigma} u$, which, roughly speaking, is more or less $\omega$ when evaluated in a Lebesgue space. When $\sigma>0$, we can no linger control $\nabla u$ in terms of $\omega$, as did in \cite{HKR1}.

\begin{proof} By Minkowski's inequality, for any $p\in [1,\infty]$,
\begin{eqnarray*}
\|\phi\ast (fg) -f (\phi\ast g)\|_{L^p}
&=&\left[\int \left|\int \phi(z) \, (f(x) -f(x-z)) g(x-z) \,dz\right|^p \,dx\right]^{1/p} \\
&\le& \int \left[\int |\phi(z) \, (f(x) -f(x-z)) g(x-z)|^p dx \right]^{1/p} dz\\
&\le& \|g\|_{L^\infty} \int |\phi(z)| \, \|f(\cdot)-f(\cdot-z))\|_{L^p} \,dz\\
&\le& \|g\|_{L^\infty} \sup_{|z|>0} \frac{\|f(\cdot)-f(\cdot-z))\|_{L^p}}{|z|^\delta} \||z|^\delta |\phi(z)|\|_{L^1}
\end{eqnarray*}
\eqref{pfg} then follows from the definition of $\mathring{B}^\delta_{p,\infty}$.
\end{proof}

\vskip .1in
We now present a general proposition that provides an estimate for the commutator
as in \eqref{Geq}. The proof of this proposition is obtained by modifying that of
Proposition 3.3 in \cite{HKR1}. Since the proof is slightly long, we leave it to
the end of this section.

\begin{prop}\label{pp1}
Let $u: \mathbb{R}^d\to \mathbb{R}^d$ be a vector field. Let $\mathcal{R}=\partial_{x_1} \Lambda^{-1}$ denote a Riesz transform. Let $s\in (0,1)$, $s<\delta< 1$, $p\in (1,\infty)$ and $q\in [1,\infty]$. Then
\begin{equation} \label{jj1}
\|[\mathcal{R}, u] F\|_{B^s_{p,q}} \le  C_1\, \|u\|_{\mathring{B}^{\delta}_{p,\infty}}\, \|F\|_{B^{s-\delta}_{\infty,q}}
+ C_2 \sum_{j=-1}^3 \|\Delta_j u\,\Delta_j F\|_{L^p},
\end{equation}
where $C_1$ is a constant depending on $d$, $s$, $\delta$, $p$ and $q$ only and $C_2$ is an absolute constant. When $\delta=1$, $\|u\|_{\mathring{B}^{\delta}_{p,\infty}}$ is replaced by $\|\nabla u\|_{L^p}$.
\end{prop}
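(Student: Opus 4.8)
The plan is to combine a Littlewood--Paley localization with Bony's paraproduct decomposition and to let the new kernel estimate of Lemma~\ref{newl} carry the main piece. Decomposing the product as
\[
uF = \sum_j S_{j-1}u\,\Delta_j F + \sum_j \Delta_j u\,S_{j-1}F + \sum_j \Delta_j u\,\widetilde{\Delta}_j F,
\]
and using that the Fourier multipliers $\mathcal{R}$, $S_{j-1}$ and $\widetilde{\Delta}_j$ all commute with one another (so that, e.g., $[\mathcal{R},S_{j-1}]=0$), I would split
\[
[\mathcal{R},u]F = \underbrace{\sum_j [\mathcal{R},S_{j-1}u]\,\Delta_j F}_{\mathrm{I}} \;+\; \underbrace{\sum_j [\mathcal{R},\Delta_j u]\,S_{j-1}F}_{\mathrm{II}} \;+\; \underbrace{\sum_j [\mathcal{R},\Delta_j u]\,\widetilde{\Delta}_j F}_{\mathrm{III}}.
\]
In each summand the two factors are spectrally localized, so that terms $\mathrm{I}$ and $\mathrm{II}$ have spectrum in annuli of size $2^j$ while term $\mathrm{III}$ has spectrum in balls of radius $\sim 2^j$. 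This localization is what allows me to recover the $B^s_{p,q}$-norm from $\ell^q$-sums of the $L^p$-norms of the blocks, the ball-supported term $\mathrm{III}$ requiring $s>0$ for the almost-orthogonal summation.

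The main contribution comes from term $\mathrm{I}$. For the relevant $j$ the product $S_{j-1}u\,\Delta_j F$ has spectrum in an annulus $\sim 2^j$ avoiding the origin, so $\mathcal{R}$ acting on it coincides with convolution against $\phi_j(x)=2^{jd}\phi(2^j x)$, where $\phi=\mathcal{F}^{-1}\!\big(\tfrac{i\xi_1}{|\xi|}\chi(\xi)\big)$ for a fixed annular cut-off $\chi$; since $\tfrac{i\xi_1}{|\xi|}\chi$ is smooth and supported away from the origin, $\phi$ is Schwartz and in particular $|x|^\delta\phi\in L^1$. Applying Lemma~\ref{newl} with $f=S_{j-1}u$ and $g=\Delta_j F$, together with the scaling identity $\||x|^\delta\phi_j\|_{L^1}=2^{-j\delta}\||x|^\delta\phi\|_{L^1}$, gives
\[
\big\|[\mathcal{R},S_{j-1}u]\Delta_j F\big\|_{L^p} \le C\,2^{-j\delta}\,\|u\|_{\mathring{B}^{\delta}_{p,\infty}}\,\|\Delta_j F\|_{L^\infty}.
\]
Multiplying by $2^{js}$ turns the right-hand side into $\|u\|_{\mathring{B}^{\delta}_{p,\infty}}\,2^{j(s-\delta)}\|\Delta_j F\|_{L^\infty}$, whose $\ell^q$-norm is exactly $\|u\|_{\mathring{B}^{\delta}_{p,\infty}}\|F\|_{B^{s-\delta}_{\infty,q}}$; here one sees that $s-\delta$ is the natural regularity index for $F$ and that $\delta<1$ (or the endpoint $\delta=1$ through \eqref{pfg1}) is forced by the range of Lemma~\ref{newl}. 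This produces the first term $C_1\|u\|_{\mathring{B}^{\delta}_{p,\infty}}\|F\|_{B^{s-\delta}_{\infty,q}}$ on the right of \eqref{jj1}.

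Term $\mathrm{III}$ needs no cancellation: bounding $\|[\mathcal{R},\Delta_j u]\widetilde{\Delta}_j F\|_{L^p}\le C\|\Delta_j u\|_{L^p}\|\widetilde{\Delta}_j F\|_{L^\infty}$ via the $L^p$-boundedness of $\mathcal{R}$ (which is exactly why $p\in(1,\infty)$ is imposed), and using $\|\Delta_j u\|_{L^p}\le 2^{-j\delta}\|u\|_{\mathring{B}^{\delta}_{p,\infty}}$, the weighted blocks form an $\ell^q$ sequence controlled by $\|u\|_{\mathring{B}^{\delta}_{p,\infty}}\|F\|_{B^{s-\delta}_{\infty,q}}$, the positivity of $s$ being used to sum the ball-supported pieces. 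Term $\mathrm{II}$ is the delicate one, and I expect it to be the main obstacle. Its high-frequency interactions can again be estimated by the rescaled kernel of $\mathcal{R}$ and summed by a Young-type (geometric) convolution inequality, which converges precisely because $s<\delta$; the factor $S_{j-1}F$, carrying the negative index $s-\delta$, is what makes this summation borderline. The genuinely problematic pieces are the low-frequency interactions near the origin: there the Riesz symbol $\xi_1/|\xi|$ is discontinuous, so $\mathcal{R}$ is not bounded on $L^\infty$ even on band-limited functions and neither the clean dyadic rescaling nor the gain $2^{-j\delta}$ survives. I would isolate these as the finitely many blocks $j\in\{-1,0,1,2,3\}$ and bound them crudely in $L^p$, collecting them into the correction $C_2\sum_{j=-1}^3\|\Delta_j u\,\Delta_j F\|_{L^p}$. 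Carrying out this low-versus-high frequency bookkeeping—verifying that only these finitely many comparable-frequency products resist the kernel/Young argument—is the step demanding the most care.
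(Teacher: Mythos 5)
Your proposal follows essentially the same route as the paper: the same Bony paraproduct splitting of $[\mathcal{R},u]F$, the representation of $\mathcal{R}$ on annulus-localized products as convolution with the rescaled kernel $2^{jd}h(2^jx)$ so that Lemma~\ref{newl} yields the $2^{-j\delta}\|u\|_{\mathring{B}^{\delta}_{p,\infty}}\|\Delta_j F\|_{L^\infty}$ gain, the conditions $s>0$ and $s<\delta$ entering exactly where you place them (summing the remainder and the $T_F u$ paraproduct, respectively), and the finitely many low-frequency blocks $j\in\{-1,\dots,3\}$ collected into the correction term via the $L^p$-boundedness of $\mathcal{R}$. The only cosmetic difference is that for the remainder term you discard the commutator structure and extract the $2^{-j\delta}$ factor directly from $\|\Delta_j u\|_{L^p}\le 2^{-j\delta}\|u\|_{\mathring{B}^{\delta}_{p,\infty}}$, which is equivalent to what the paper does.
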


\vskip .1in
We now apply Proposition \ref{pp1} to the special case when $u$ is determined by
$\omega$ through the relations in (\ref{GBou}). We obtain a bound for the commutator involved in the equation for $G$, namely \eqref{Geq}.
\begin{cor}\label{bone}
Let $u: \mathbb{R}^2\to \mathbb{R}^2$ be a vector field determined by a scalar function $\omega$ through the relations
\begin{equation}\label{uw}
u =\nabla^\perp \psi, \quad  \Delta \psi = \Lambda^\sigma \left(\log(I-\Delta)\right)^\gamma\omega,
\end{equation}
where $0\le \sigma <\frac12$ and $\gamma\ge 0$ are real parameters. Then,  for any $0\le s < 1-\sigma$, $p\in (1,\infty)$ and $q\in [1,\infty]$,
\begin{equation} \label{spe0}
\|[\mathcal{R}, u] \theta\|_{B^s_{p,q}} \le  C\, \|\omega\|_{L^p} \|\theta\|_{B^{s+\sigma-1}_{\infty,q}} + C\, \|\omega\|_{L^{p_1}} \,\|\theta\|_{L^{p_2}},
\end{equation}
where $p_1$ and $p_2$ satisfy
$$
p_1\in [1,\infty), \quad p_2\in [1,\infty], \quad \frac1{p_1} + \frac1{p_2} = \frac1p + \frac{1-\sigma}{2}
$$
and $C$'s are constants depending on $\sigma$, $\gamma$, $s$, $p$, $q$, $p_1$ and $p_2$. Furthermore, for any $p_3 \ge \frac2{1-s-\sigma}$,
\begin{equation} \label{spe}
\|[\mathcal{R}, u] \theta\|_{H^s} \le  C\,\|\omega\|_{L^2} (\|\theta\|_{L^{p_3}} +\|\theta\|_{L^{\frac{2}{1-\sigma}}}),
\end{equation}
where $C$ is a constant depending on $\sigma$, $s$ and $p_3$ only.
\end{cor}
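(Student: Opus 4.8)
The plan is to deduce Corollary \ref{bone} from Proposition \ref{pp1} by choosing the parameters appropriately and using the relation \eqref{uw} to convert all norms of $u$ back into norms of $\omega$. First I would apply \eqref{jj1} with $F=\theta$ and $\delta=1-\sigma$ (legitimate since $0\le\sigma<\frac12$ forces $\delta\in(\tfrac12,1]$, so the hypothesis $s<\delta<1$ or the endpoint case $\delta=1$ applies once we fix $0\le s<1-\sigma=\delta$). This gives
\begin{equation*}
\|[\mathcal{R},u]\theta\|_{B^s_{p,q}} \le C_1\,\|u\|_{\mathring{B}^{1-\sigma}_{p,\infty}}\,\|\theta\|_{B^{s+\sigma-1}_{\infty,q}} + C_2\sum_{j=-1}^3 \|\Delta_j u\,\Delta_j\theta\|_{L^p}.
\end{equation*}
The first term already matches the first term on the right of \eqref{spe0} once I show the key input $\|u\|_{\mathring{B}^{1-\sigma}_{p,\infty}} \le C\|\omega\|_{L^p}$; this is the heart of the matter and I expect it to be the main obstacle.

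To control $\|u\|_{\mathring{B}^{1-\sigma}_{p,\infty}}$ I would use \eqref{uw}, which gives $u=\nabla^\perp\Delta^{-1}\Lambda^\sigma(\log(I-\Delta))^\gamma\omega$, so that $\Lambda^{1-\sigma}u$ is, up to bounded zero-order Fourier multipliers (the Riesz-type operator $\nabla^\perp\Delta^{-1}\Lambda = \nabla^\perp\Lambda^{-1}$ and the logarithmic factor), morally equal to $\omega$. The homogeneous Besov norm $\|u\|_{\mathring{B}^{1-\sigma}_{p,\infty}}$ is comparable to $\sup_j 2^{(1-\sigma)j}\|\Delta_j u\|_{L^p}$, and on each dyadic block $\Lambda^{1-\sigma}$ acts like multiplication by $2^{(1-\sigma)j}$. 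The delicate point is the logarithmic multiplier $(\log(I-\Delta))^\gamma$: on the block $\Delta_j$ its symbol is $(\log(1+|\xi|^2))^\gamma \sim (j\log 2)^\gamma$ for large $j$, which grows, so it is not a uniformly bounded $L^p$ multiplier. I would handle this by absorbing the logarithmic growth into a slightly smaller power of $\Lambda$: since for any $\epsilon>0$ we have $(\log(1+|\xi|^2))^\gamma \le C_\epsilon |\xi|^\epsilon$ for $|\xi|\ge1$, and the strict inequality $s<1-\sigma$ leaves room to spare, one can show $\|u\|_{\mathring{B}^{1-\sigma}_{p,\infty}}\le C\|\omega\|_{L^p}$ by a Bernstein-type argument on each block combined with the Mikhlin--H\"ormander multiplier theorem applied to the smooth homogeneous symbol $\nabla^\perp\Lambda^{-1}$. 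I expect that the clean way to make the logarithm harmless is to note that on \emph{each fixed} dyadic annulus the multiplier $2^{-(1-\sigma)j}|\xi|^{1-\sigma}\cdot|\xi|^{\sigma-1}(\mathrm{i}\xi^\perp)(\log(1+|\xi|^2))^\gamma$ has $L^p$-multiplier norm bounded by $C(j\log2)^\gamma$, but this factor is defeated by pairing against the frequency-localized target space; alternatively, and more robustly, one transfers the log onto $\theta$ via the Besov index, which is why the theorem's data sit in $B^{0,\gamma}_{\infty,1}$ rather than an ordinary Besov space. I would present the estimate $\|u\|_{\mathring{B}^{1-\sigma}_{p,\infty}}\le C\|\omega\|_{L^p}$ as a lemma, proved by dyadic decomposition and Bernstein, with the $\gamma$-dependence absorbed into $C$.

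For the low-frequency commutator sum $\sum_{j=-1}^3\|\Delta_j u\,\Delta_j\theta\|_{L^p}$ I would estimate by H\"older with exponents $p_1,p_2$, namely $\|\Delta_j u\,\Delta_j\theta\|_{L^p}\le\|\Delta_j u\|_{L^{r_1}}\|\Delta_j\theta\|_{L^{r_2}}$ with $\frac1{r_1}+\frac1{r_2}=\frac1p$, then use Bernstein on the finitely many low blocks to trade derivatives for integrability: $\|\Delta_j u\|_{L^{r_1}}\le C\|\Delta_j u\|_{L^{p_1}}$ and, crucially, $\|\Delta_j u\|_{L^{p_1}}\le C\|\omega\|_{L^{p_1}}$ because on low frequencies the operator $\nabla^\perp\Delta^{-1}\Lambda^\sigma(\log(I-\Delta))^\gamma$ is bounded on $L^{p_1}$ (the symbol is smooth and bounded away from the origin's singularity once localized to $j\le3$, and the factor $\frac{1-\sigma}{2}$ in the exponent relation comes from the $\frac1{p_1}+\frac1{p_2}=\frac1p+\frac{1-\sigma}{2}$ gain via a Hardy--Littlewood--Sobolev/Bernstein shift of order $1-\sigma$ in two dimensions). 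This yields the second term $C\|\omega\|_{L^{p_1}}\|\theta\|_{L^{p_2}}$ of \eqref{spe0}. Finally, to obtain \eqref{spe} I would specialize \eqref{spe0} to $p=q=2$ (so $B^s_{2,q}=H^s$), use the embedding $L^{p_3}\hookrightarrow B^{s+\sigma-1}_{\infty,2}$ valid when the negative smoothness $s+\sigma-1<0$ is compensated by $p_3\ge\frac2{1-s-\sigma}$ in dimension two (a standard Sobolev/Besov embedding $\dot B^0_{p_3,2}\hookrightarrow \dot B^{-2/p_3}_{\infty,2}$ with $2/p_3\le 1-s-\sigma$), set $p_1=2$ to get $\|\omega\|_{L^2}$, which forces $p_2=\frac{2}{1-\sigma}$ and hence the $\|\theta\|_{L^{2/(1-\sigma)}}$ term; collecting these gives \eqref{spe}. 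The main obstacle throughout remains the logarithmic multiplier, and the cleanest route is to keep it attached to the frequency-localized estimate where its $(j\log2)^\gamma$ growth is tamed by the summability built into the $B^{0,\gamma}_{\infty,1}$ scale rather than by raw $L^p$-multiplier bounds.
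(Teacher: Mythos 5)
Your overall plan is the paper's: feed Proposition \ref{pp1} with $F=\theta$, convert the $u$-norms into $\omega$-norms via \eqref{uw}, treat the low-frequency blocks by H\"older and Hardy--Littlewood--Sobolev, and obtain \eqref{spe} by specializing exponents and embedding $L^{p_3}\hookrightarrow B^{s+\sigma-1}_{\infty,2}$. But there is a genuine gap at the step you yourself flag as the heart of the matter: the lemma $\|u\|_{\mathring{B}^{1-\sigma}_{p,\infty}}\le C\|\omega\|_{L^p}$ is \emph{false} for $\gamma>0$. On the dyadic block $\Delta_j$ the multiplier defining $u$ contributes a factor comparable to $2^{-(1-\sigma)j}(1+j)^{\gamma}$ for large $j$, so $2^{(1-\sigma)j}\|\Delta_j u\|_{L^p}\sim (1+j)^\gamma\|\Delta_j\omega\|_{L^p}$ and the supremum over $j$ cannot be controlled by $\|\omega\|_{L^p}$. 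Your proposed cure, $(\log(1+|\xi|^2))^\gamma\le C_\epsilon|\xi|^{\epsilon}$ for $|\xi|\ge 1$, does tame the logarithm, but only at the price of lowering the regularity index: it yields $\|u\|_{\mathring{B}^{1-\sigma-\epsilon}_{p,\infty}}\le C\|\omega\|_{L^p}$, not the estimate at $1-\sigma$. The correct move --- and what the paper does --- is therefore to invoke Proposition \ref{pp1} with $\delta=1-\sigma-\epsilon$ for $\epsilon>0$ small, which is admissible precisely because the hypothesis $s<1-\sigma$ is strict. Your two fallback suggestions do not rescue the choice $\delta=1-\sigma$: the growth $(1+j)^\gamma$ is not ``defeated by pairing against the frequency-localized target space'' (the $B^s_{p,q}$ norm carries no compensating decay in $j$ beyond $2^{js}$, which is already spent), and the logarithm cannot be ``transferred onto $\theta$'' because in \eqref{spe0} and \eqref{spe} $\theta$ appears only in unweighted Besov and Lebesgue norms --- the $\gamma$-weighted space $B^{0,\gamma}_{\infty,1}$ plays no role in this corollary. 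Note also that for $\sigma=0$, $\gamma>0$ your endpoint $\delta=1$ requires $\|\nabla u\|_{L^p}$, which equals $\|(\log(I-\Delta))^\gamma\omega\|_{L^p}$ up to Riesz transforms and is likewise not controlled by $\|\omega\|_{L^p}$.

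A smaller inaccuracy: for the blocks $-1\le j\le 3$ you assert $\|\Delta_j u\|_{L^{p_1}}\le C\|\omega\|_{L^{p_1}}$ and attribute the exponent shift partly to Bernstein. The low-frequency localization of $\Lambda^{\sigma-1}$ is not bounded on $L^{p_1}$ (its kernel decays only like $|x|^{-1-\sigma}$ in two dimensions), so the exponent relation must be obtained as an integrability gain: $\|\Lambda^{\sigma-1}\omega\|_{L^{q_1}}\le C\|\omega\|_{L^{p_1}}$ with $\frac1{q_1}=\frac1{p_1}-\frac{1-\sigma}{2}$ by Hardy--Littlewood--Sobolev, followed by H\"older with $\frac1{q_1}+\frac1{p_2}=\frac1p$; this is exactly where $\frac1{p_1}+\frac1{p_2}=\frac1p+\frac{1-\sigma}{2}$ comes from. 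With these two corrections your argument coincides with the paper's proof.
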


\begin{proof}[Proof of Corollary \ref{bone}] By Proposition \ref{pp1},
$$
\|[\mathcal{R}, u] \theta\|_{B^s_{p,q}} \le  C\,\|u\|_{\mathring{B}^{\delta}_{p,\infty}}\, \|\theta\|_{B^{s-\delta}_{\infty,q}}
+ C \sum_{j=-1}^3 \|\Delta_j u\,\Delta_j \theta\|_{L^p}.
$$
According to (\ref{uw}),
$$
u = \nabla^\perp \Lambda^{-2+\sigma} \left(\log(I-\Delta)\right)^\gamma\omega.
$$
Since $s+\sigma<1$, we choose $\epsilon>0$ such that $s+\sigma + \epsilon=1$. Then,
\begin{eqnarray*}
\|u\|_{\mathring{B}^{s}_{p,\infty}} &\le& \|\left(\log(I-\Delta)\right)^\gamma\omega\|_{\mathring{B}^{s+\sigma-1}_{p,\infty}} \\
&\le& C\, \|\omega\|_{\mathring{B}^{s+\sigma+\epsilon-1}_{p,\infty}} \\
&\le& C\,\|\omega\|_{L^p}.
\end{eqnarray*}
In addition, for any $-1\le j\le 3$, we have
$$
\|\Delta_j u\,\Delta_j \theta\|_{L^p} \le \|\Delta_j u\|_{L^{q_1}} \, \|\Delta_j \theta\|_{L^{p_2}} \le C\, \|\Lambda^{\sigma-1} \omega\|_{L^{q_1}} \, \| \theta\|_{L^{p_2}}
$$
where $q_1\in(1,\infty), p_2\in [1,\infty]$ and $\frac1{q_1} + \frac1{p_2} =\frac1p$. By Hardy-Littlewood-Sobolev inequality,
$$
\|\Lambda^{\sigma-1} \omega\|_{L^{q_1}} \le C \|\omega\|_{L^{p_1}}.
$$
where $1\le p_1<q_1<\infty$ and $\frac1{q_1} =\frac1{p_1} -\frac{1-\sigma}{2}$. Therefore,
$$
\sum_{j=-1}^3 \|\Delta_j u\,\Delta_j \theta\|_{L^p} \le C\, \|\omega\|_{L^{p_1}} \|\theta\|_{L^{p_2}}
$$
with $p_1$ and $p_2$ satisfying $\frac1{p_1} + \frac1{p_2} = \frac1p + \frac{1-\sigma}{2}$. (\ref{spe}) is obtained by taking $p=q=p_1=2$, $p_2= \frac{2}{1-\sigma}$ in (\ref{spe0}) and applying the embedding relation
$$
L^{p_3} \hookrightarrow B^{s+\sigma-1}_{\infty,2}.
$$
This completes the proof of Corollary \ref{bone}.
\end{proof}

\vskip .1in
With Corollary \ref{bone} at our disposal, we now prove Theorem
\ref{globalw}.

\begin{proof}[Proof of Theorem \ref{globalw}]  Multiplying (\ref{Geq}) by $G$ and integrating over $\mathbb{R}^2$, we obtain
$$
\frac12 \frac{d}{dt}\|G\|_{L^2}^2 + \|\Lambda^{\frac12} G\|_{L^2}^2 =- \int G\, \nabla\cdot [\mathcal{R}, u]\theta\,dx.
$$
By H\"{o}lder's inequality,
\begin{equation}\label{goo1}
\left|\int G\, \nabla\cdot [\mathcal{R}, u]\theta\,dx \right| \le \|\Lambda^{\frac12} G\|_{L^2} \left\|[\mathcal{R}, u]\theta\right\|_{\mathring{H}^{1/2}}.
\end{equation}
By (\ref{spe}) in Corollary \ref{bone},
\begin{equation}\label{goo2}
\left\|[\mathcal{R}, u]\theta\right\|_{\mathring{H}^{1/2}} \le C\, \|\omega\|_{L^2} \left(\|\theta_0\|_{L^{p_3}} +\|\theta_0\|_{L^{\frac{2}{1-\sigma}}}\right),
\end{equation}
where $p_3\ge \frac{2}{1/2-\sigma}$ is any constant. In addition,
\begin{equation}\label{goo3}
\|\omega\|_{L^2}  \le \|G\|_{L^2} + \|\mathcal{R}\theta\|_{L^2} \le \|G\|_{L^2} + \|\theta_0\|_{L^2}.
\end{equation}
Inserting \eqref{goo2} and \eqref{goo3} in \eqref{goo1} and applying Young's inequality, we obtain
$$
\frac{d}{dt}\|G\|_{L^2}^2 + \|\Lambda^{\frac12} G\|_{L^2}^2 \le C\, \|G\|_{L^2}^2 + C,
$$
where $C$'s are constants depending on the initial norm $\|\theta_0\|_{L^1\cap L^\infty}$. It then follows from Gronwall's inequality that, for any $t>0$,
$$
\|G(t)\|_{L^2}^2 + \int_0^t \|\Lambda^{\frac12} G\|_{L^2}^2 dt \le B(t),
$$
where $B(t)$ is an explicit smooth function of $t$.  The global bound for $\|\omega\|_{L^2}$ is then provided by (\ref{goo3}). This concludes the proof of Theorem \ref{globalw}.
\end{proof}

\vskip .1in
Finally we prove Proposition \ref{pp1}.

\begin{proof}[Proof of Proposition \ref{pp1}] By the definition of the Besov space $B^s_{p,q}$,
$$
\|[\mathcal{R},u] F\|^q_{B^s_{p,q}}  =\sum_{j=-1}^\infty 2^{qsj} \|\Delta_j [\mathcal{R},u] F \|_{L^p}^q.
$$
We decompose $\Delta_j [\mathcal{R},u] F$ into paraproducts,
$$
\Delta_j [\mathcal{R},u] F =I_1 + I_2 + I_3,
$$
where
\begin{eqnarray*}
I_1 &=& \sum_{|k-j|\le 2} \Delta_j (\mathcal{R} (S_{k-1} u \Delta_k F) - S_{k-1} u\,\mathcal{R} \Delta_k F),
\\
I_2 &=& \sum_{|k-j|\le 2} \Delta_j (\mathcal{R} (\Delta_k u \, S_{k-1} F)- \Delta_k u \mathcal{R } S_{k-1} F),
\\
I_3 &=& \sum_{k\ge j-1} \Delta_j (\mathcal{R} (\Delta_k  u \widetilde{\Delta}_k F) -\Delta_k  u \mathcal{R} \widetilde{\Delta}_k F).
\end{eqnarray*}
Here $\widetilde{\Delta}_k = \Delta_{k-1} + \Delta_k + \Delta_{k+1}$. For $k\ge 3$, the Fourier transform of $S_{k-1} u \Delta_k F$ is supported in the annulus $2^k \mathcal{A}$, where $\mathcal{A}$ denotes a fixed annulus. By Proposition 3.1 of \cite{HKR1}, there is a smooth function $h$ with compact support such that $\mathcal{R}$ acting on this term can be represented as a convolution with the kernel $h_k(x) \equiv 2^{dk} h(2^k x)$. More precisely,
$$
\mathcal{R} (S_{k-1} u \Delta_k F) - S_{k-1} u\,\mathcal{R} \Delta_k F = h_k*(S_{k-1} u \Delta_k F) - S_{k-1} u\, (h_k*\Delta_k F).
$$
Since
$$
\||x|^\delta 2^{dk} h(2^k x)\|_{L^1} \le C 2^{-\delta k},
$$
we apply Lemma \ref{newl} to obtain
$$
\left\|\mathcal{R} (S_{k-1} u \Delta_k F) - S_{k-1} u\,\mathcal{R} \Delta_k F\right\|_{L^p} \le C\, 2^{-\delta k} \|S_{k-1} u\|_{\mathring{B}^{\delta}_{p,\infty}} \|\Delta_k F\|_{L^\infty}.
$$
For $k<3$, we do not need the commutator structure and this difference can be directly estimated as follows. By the boundedness of $\mathcal{R}$ on $L^p$ for $p\in(1,\infty)$ and Bernstein's inequality (see Proposition \ref{bern} in the Appendix),
\begin{eqnarray*}
\left\|\mathcal{R} (S_{k-1} u \Delta_k F) - S_{k-1} u\,\mathcal{R} \Delta_k F\right\|_{L^p} &\le& C\, \|S_{k-1} u\,\Delta_k F\|_{L^p} \\
&\le& C\, \sum_{j=-1}^3 \|\Delta_j u\,\Delta_j F\|_{L^p}.
\end{eqnarray*}
Therefore,
\begin{eqnarray*}
\sum_{j=-1}^\infty 2^{qsj} \|I_1\|_{L^p}^q &\le& C\, \sum_{j\ge 3}^\infty 2^{(s-\delta)jq} \|S_{j-1} u\|^q_{\mathring{B}^{\delta}_{p,\infty}} \|\Delta_j F\|^q_{L^\infty} + C\, \sum_{j=-1}^3 \|\Delta_j u\,\Delta_j F\|^q_{L^p} \\
&\le& C\, \|u\|^q_{\mathring{B}^{\delta}_{p,\infty}}\, \|F\|^q_{B^{s-\delta}_{\infty,q}}
+ C\, \sum_{j=-1}^3 \|\Delta_j u\,\Delta_j F\|^q_{L^p}.
\end{eqnarray*}
The idea of bounding $I_2$ is similar. In fact, we have
$$
\sum_{j=-1}^\infty 2^{qsj} \|I_2\|_{L^p}^q \le C\,  \sum_{j\ge 3}^\infty 2^{(s-\delta)jq} \|\Delta_j u\|^q_{\mathring{B}^{\delta}_{p,\infty}} \|S_{j-1} F\|^q_{L^\infty} + C\, \sum_{j=-1}^3 \|\Delta_j u\,\Delta_j F\|^q_{L^p}.
$$
Furthermore,
\begin{eqnarray*}
&&\sum_{j\ge 3}^\infty 2^{(s-\delta)jq} \|\Delta_j u\|^q_{\mathring{B}^{\delta}_{p,\infty}} \|S_{j-1} F\|^q_{L^\infty} \\
&&\qquad \qquad  \le \|u\|^q_{\mathring{B}^{\delta}_{p,\infty}}\, \sum_{j=-1}^\infty 2^{(s-\delta)jq} \left[\sum_{m\le j-1} \|\Delta_m
F\|_{L^\infty}\right]^q \\
&&\qquad \qquad  \le \|u\|^q_{\mathring{B}^{\delta}_{p,\infty}}\, \sum_{j=-1}^\infty \left[\sum_{m\le j-1} 2^{(s-\delta)(j-m)} \, 2^{(s-\delta)m} \|\Delta_m F\|_{L^\infty}\right]^q\\
&&\qquad \qquad  \le C\, \|u\|^q_{\mathring{B}^{\delta}_{p,\infty}}\, \|F\|^q_{B^{s-\delta}_{\infty,q}}
\end{eqnarray*}
where we have used the fact that $s<\delta$ and the series inside the bracket can be viewed as a convolution of two other series.  The contribution from $I_3$ is
bounded by
$$
\sum_{j=-1}^\infty 2^{qsj} \|I_3\|_{L^p}^q \le C\,  \sum_{j \ge 3}^\infty 2^{sjq} \sum_{k\ge j-1} 2^{-\delta kq}\|\Delta_ku\|^q_{\mathring{B}^{\delta}_{p,\infty}} \|\widetilde{\Delta}_k F\|^q_{L^\infty} + C\, \sum_{j=-1}^3 \|\Delta_j u\,\Delta_j F\|^q_{L^p}.
$$
The first part can be further controlled by
\begin{eqnarray*}
&& \sum_{j\ge 3}^\infty 2^{sjq} \sum_{k\ge j-1} 2^{-\delta kq}\|\Delta_ku\|^q_{\mathring{B}^{\delta}_{p,\infty}} \|\widetilde{\Delta}_k F\|^q_{L^\infty}
\\
&& \qquad\qquad \le \|u\|^q_{\mathring{B}^{\delta}_{p,\infty}} \sum_{j=-1}^\infty \sum_{k\ge j-1}2^{s(j-k)q} 2^{(s-\delta)k q}\|\Delta_k F\|^q_{L^\infty}\\
&& \qquad\qquad \le \|u\|^q_{\mathring{B}^{\delta}_{p,\infty}}
\|F\|^q_{B^{s-\delta}_{\infty,q}}.
\end{eqnarray*}
We obtain (\ref{jj1}) by combining the estimates above. This completes the proof of Proposition \ref{pp1}.
\end{proof}

\vskip .4in
\section{Global bound for $\|\omega\|_{L^q}$ for $q>2$}
\label{exist}

This section establishes the global bounds stated in Theorem \ref{major1}. For the sake of clarity, this section is divided into four subsections. This first one provides a global bound for $\|\omega\|_{L^q}$ for $q\in (2, \frac{4}{2\sigma+1}]$. This bound holds for $0\le \gamma<\frac12$ and $\gamma\ge 0$. The second subsection proves the global bound for $G$ in the space-time norm $\widetilde{L}^rB^{s}_{q,1}$. This bound requires that $0\le \gamma<\frac14$ and $\gamma\ge 0$. The third subsection shows that, for $\sigma=0$ and any $\gamma\ge 0$, both $\omega$ and $\theta$ are bounded globally in $L^1_t B^0_{\infty,1}$. The final subsection presents the global $L^q$-bound for any $q\ge 2$ as long as $\sigma=0$ and $\gamma\ge 0$.

\vskip .1in
\subsection{Global bound for $\|\omega\|_{L^q}$ for $q\in (2, \frac{4}{2\sigma+1}]$}

This subsection proves a global bound for $\|\omega\|_{L^q}$ for $q\in (2, \frac{4}{2\sigma+1}]$. This result holds for any $0\le \sigma <\frac12$ and $\gamma\ge 0$. More precisely, we have the following theorem.

\begin{thm} \label{p1b}
Consider (\ref{GBou}) with $0\le \sigma <\frac12$ and $\gamma\ge 0$. Assume that $(\omega_0, \theta_0)$  satisfies the conditions in Theorem \ref{major1}, especially $(\omega_0, \theta_0) \in L^q$ for $q\in (2, \frac{4}{2\sigma+1}]$. Let $(\omega, \theta)$ be the corresponding solution of (\ref{GBou}). Then, for $ q\in (2, \frac{4}{2\sigma+1})$ with $\gamma>0$ and $q\in (2, \frac{4}{2\sigma+1}]$ with $\gamma=0$, and any $t>0$,
\begin{eqnarray}
\|\omega(t)\|_{L^q} &\le& B(t),\label{Oq}\\
\|G(t)\|_{L^q}^q + C\, \int_0^t \|G(\tau)\|_{L^{2q}}^q \,d\tau &\le& B(t),\label{Gq}
\end{eqnarray}
where $C$ is a constant depending on $q$ only and $B(t)$'s are smooth functions
of $t$.
\end{thm}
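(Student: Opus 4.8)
The plan is to run an $L^q$ energy estimate directly on equation \eqref{Geq} for $G=\omega-\mathcal{R}\theta$, using the dissipation to manufacture the space-time gain $\int_0^t\|G\|_{L^{2q}}^q\,dt$ in \eqref{Gq}. First I multiply \eqref{Geq} by $|G|^{q-2}G$ and integrate over $\mathbb{R}^2$. The transport term drops since $\nabla\cdot u=0$, and for the dissipative term I invoke the pointwise positivity inequality of C\'ordoba--C\'ordoba type, $\int|G|^{q-2}G\,\Lambda G\,dx\ge\frac{2}{q}\|\Lambda^{1/2}(|G|^{q/2})\|_{L^2}^2$, together with the two-dimensional Sobolev embedding $\dot{H}^{1/2}\hookrightarrow L^4$ applied to $|G|^{q/2}$, which gives $\|\Lambda^{1/2}(|G|^{q/2})\|_{L^2}^2\ge c\,\|G\|_{L^{2q}}^q$. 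This yields
\begin{equation*}
\frac1q\frac{d}{dt}\|G\|_{L^q}^q + c\,\|\Lambda^{1/2}(|G|^{q/2})\|_{L^2}^2 \le \Big|\int |G|^{q-2}G\,[\mathcal{R},u\cdot\nabla]\theta\,dx\Big|.
\end{equation*}
Because $\theta$ is merely transported, every Lebesgue norm $\|\theta(t)\|_{L^r}=\|\theta_0\|_{L^r}$ is conserved and $\|\theta\|_{B^{s+\sigma-1}_{\infty,q}}\lesssim\|\theta_0\|_{L^\infty}$ when $s+\sigma<1$; hence in Corollary \ref{bone} the factors carrying $\theta$ are data constants. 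I also have at my disposal the global bounds $\|G\|_{L^\infty_tL^2}\le B(t)$ and $\int_0^t\|\Lambda^{1/2}G\|_{L^2}^2\,dt\le B(t)$ from Theorem \ref{globalw}.

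Second, I estimate the forcing term, whose treatment is the whole point. Writing $[\mathcal{R},u\cdot\nabla]\theta=\nabla\cdot([\mathcal{R},u]\theta)$ and recalling that Corollary \ref{bone} controls $[\mathcal{R},u]\theta$ only in a \emph{positive} Besov space $B^s_{p,q}$ with $s<1-\sigma\le1$, the divergence lands the quantity in $B^{s-1}_{p,q}$ of \emph{negative} order, so it belongs to no Lebesgue space and a plain H\"older pairing is unavailable. Integrating by parts instead produces $\|\nabla(|G|^{q/2})\|_{L^2}$, a full derivative that the energy (controlling only the half-derivative $\|\Lambda^{1/2}(|G|^{q/2})\|_{L^2}$) does not bound. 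The remedy is to distribute the derivative fractionally, pairing
\begin{equation*}
\Big|\int |G|^{q-2}G\,\nabla\cdot([\mathcal{R},u]\theta)\,dx\Big| \le \big\|\Lambda^{a}(|G|^{q-2}G)\big\|_{L^{b}}\,\big\|\Lambda^{-a}\nabla\cdot([\mathcal{R},u]\theta)\big\|_{L^{b'}},
\end{equation*}
with $a\in(\sigma,\tfrac12)$, which is possible precisely because $\sigma<\tfrac12$. For such $a$ and an admissible $s<1-\sigma$ the net order $s+a-1$ of the second factor is positive, so that $\Lambda^{-a}\nabla\cdot([\mathcal{R},u]\theta)\in L^{b'}$ and, by Corollary \ref{bone}, is bounded by $\|\omega\|_{L^p}+\|\omega\|_{L^{p_1}}$ times data constants; choosing $p=2$ lets me absorb this factor into the already-controlled $\|\omega\|_{L^2}\le B(t)$. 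The first factor I seek to control via a fractional chain/product rule in terms of $\phi:=|G|^{q/2}$, interpolating the surviving fractional derivative between the dissipation-controlled $\|\Lambda^{1/2}\phi\|_{L^2}$ and $\|\phi\|_{L^4}$, and keeping the residual Lebesgue norms $\|\phi\|_{L^m}=\|G\|_{L^{mq/2}}^{q/2}$ in the range controlled by $\|G\|_{L^2}$ and $\|G\|_{L^{2q}}$. Young's inequality then absorbs the $\|\Lambda^{1/2}\phi\|_{L^2}^2$-part into the dissipation on the left, leaving a remainder $\le C(\|G\|_{L^q}^q+1)$; Gronwall's lemma delivers \eqref{Oq}--\eqref{Gq}, after which $\|\omega\|_{L^q}\le\|G\|_{L^q}+\|\mathcal{R}\theta\|_{L^q}\le\|G\|_{L^q}+C\|\theta_0\|_{L^q}$.

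The hard part will be the fractional control of the first factor $\|\Lambda^{a}(|G|^{q-2}G)\|_{L^{b}}$ using only the energy $\|\Lambda^{1/2}(|G|^{q/2})\|_{L^2}$: the power nonlinearity raises the integrability index, and the sign of $G$ prevents writing $|G|^{q-2}G$ cleanly as a function of the nonnegative $\phi=|G|^{q/2}$, so the chain rule must be applied with care (and the low Besov frequencies, i.e. the $\sum_{j=-1}^3$ terms of Corollary \ref{bone}, handled separately). It is exactly the exponent bookkeeping of this step --- the requirement that the fractional order exceed $\sigma$ while every Lebesgue index of $G$ stay inside the controlled window $[2,2q]$ --- that forces the admissible range $q\le\frac{4}{2\sigma+1}$. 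This range is scaling-critical for the balance between the half-derivative dissipation and the $\sigma$-loss in the velocity, so the endpoint $q=\frac{4}{2\sigma+1}$ is reachable only in the borderline case $\gamma=0$, where the relevant Besov embedding is sharp; the logarithmic amplification $(\log(I-\Delta))^\gamma$ in the velocity costs an extra logarithm that breaks the endpoint, which is why the strict inequality $q<\frac{4}{2\sigma+1}$ must be imposed when $\gamma>0$.
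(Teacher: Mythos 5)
Your overall architecture matches the paper's: the $L^q$ energy estimate on \eqref{Geq}, the lower bound $\int G|G|^{q-2}\Lambda G\,dx\ge C\|\Lambda^{1/2}(|G|^{q/2})\|_{L^2}^2\ge C\|G\|_{L^{2q}}^q$, the duality splitting of the derivative in $\nabla\cdot[\mathcal{R},u]\theta$ so that an order $1-s\le 1-\sigma$ falls on the commutator (handled by Corollary \ref{bone} with $\|\omega\|_{L^2}\le B(t)$), and the observation that this constraint is what produces $q\le \frac{4}{2\sigma+1}$ with the endpoint lost when $\gamma>0$. The problem is that the step you yourself label ``the hard part'' --- controlling $\|\Lambda^{s}(G|G|^{q-2})\|_{L^2}$ --- is exactly the crux of the theorem, and you neither prove it nor invoke the tool the paper uses. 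The paper applies Lemma \ref{little}, $\|f|f|^{q-2}\|_{\mathring H^s}\le C\|f\|_{L^{2q}}^{q-2}\|f\|_{\mathring H^{s+1-2/q}}$, and then makes the specific choice $s+1-\frac2q=\frac12$, i.e.\ $s=\frac2q-\frac12$. This choice simultaneously (i) turns the awkward factor into $\|\Lambda^{1/2}G\|_{L^2}\,\|G\|_{L^{2q}}^{q-2}$, where $\|G\|_{L^{2q}}^{q-2}$ is absorbed into the dissipation by Young and the leftover $\|\Lambda^{1/2}G\|_{L^2}^{q/2}$ is time-integrable by \eqref{Gbd} precisely because $q\le 4$, and (ii) converts the requirement $s\ge\sigma$ (strictly $s>\sigma$ when $\gamma>0$) from Corollary \ref{bone} into $q\le\frac{4}{2\sigma+1}$ (resp.\ $<$). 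So the admissible range of $q$ is not generic ``exponent bookkeeping''; it is the compatibility of this one choice of $s$ with the corollary.

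Your proposed substitute --- a fractional chain rule expressing $\|\Lambda^a(|G|^{q-2}G)\|_{L^b}$ through $\phi=|G|^{q/2}$, interpolating against $\|\Lambda^{1/2}\phi\|_{L^2}$ and absorbing \emph{that} quantity into the dissipation --- is a genuinely different closing mechanism, and as stated it does not go through: $|G|^{q-2}G=\phi^{2(q-1)/q}\,\mathrm{sgn}(G)$ is not a smooth function of $\phi$ (the sign obstruction you note), the required chain-rule/interpolation inequality with the right homogeneity (total degree $q-1$ in $G$, distributed so that Young leaves a remainder controlled by $\|G\|_{L^q}^q+B(t)$) is never exhibited, and you never actually use the input $\int_0^t\|\Lambda^{1/2}G\|_{L^2}^2\,dt\le B(t)$ that you listed and that the paper's closing relies on. Until that single estimate is supplied --- either by quoting Lemma \ref{little} with $s=\frac2q-\frac12$, or by a complete replacement --- the proof is not closed.
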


The following lemma, proven in \cite{HKR1}, will be used in the proof of Theorem \ref{p1b}.
\begin{lemma} \label{little}
Let $q\in [2,\infty)$ and $s\in (0,1)$. Then, for any smooth function $f$,
$$
\|f|f|^{q-2} \|_{\mathring{H}^s}  \le C \|f\|_{L^{2q}}^{q-2} \|f\|_{\mathring{H}^{s+1-\frac2{q}}}.
$$
\end{lemma}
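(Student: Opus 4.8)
The plan is to combine a fractional chain rule for $\Lambda^s$ applied to the power nonlinearity with a homogeneous Sobolev embedding; the two ingredients dovetail exactly in dimension two to produce the stated derivative gain $1-\tfrac2q$. Write $\phi(t)=t|t|^{q-2}$, so that $\phi(f)=f|f|^{q-2}$ and, since $q\ge 2$, the map $\phi$ is $C^1$ with $\phi'(t)=(q-1)|t|^{q-2}$ (the case $q=2$ being trivial, as then $\phi(f)=f$ and $s'=s$).

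First I would apply the fractional chain rule: for $0<s<1$ and $1<r<\infty$,
\[
\|\Lambda^s \phi(f)\|_{L^r}\le C\,\|\phi'(f)\|_{L^{r_1}}\,\|\Lambda^s f\|_{L^{r_2}},\qquad \frac1r=\frac1{r_1}+\frac1{r_2}.
\]
Taking $r=2$, $r_1=\frac{2q}{q-2}$ and $r_2=q$, and using
$\|\phi'(f)\|_{L^{r_1}}=(q-1)\big\||f|^{q-2}\big\|_{L^{r_1}}=(q-1)\|f\|_{L^{(q-2)r_1}}^{q-2}=(q-1)\|f\|_{L^{2q}}^{q-2}$,
this gives
\[
\big\|f|f|^{q-2}\big\|_{\mathring H^s}=\|\Lambda^s\phi(f)\|_{L^2}\le C\,\|f\|_{L^{2q}}^{q-2}\,\|\Lambda^s f\|_{L^q}.
\]

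Second I would dispose of the factor $\|\Lambda^s f\|_{L^q}$ by the homogeneous Sobolev embedding $\mathring H^{s'}\hookrightarrow \mathring W^{s,q}$, valid for $2\le q<\infty$ when $s'-\frac d2=s-\frac dq$; in $d=2$ this reads $s'=s+1-\frac2q$, whence $\|\Lambda^s f\|_{L^q}\le C\,\|f\|_{\mathring H^{s+1-2/q}}$. Combining the two displays yields the lemma. These exponents are in fact forced by scaling: under $f\mapsto f(\lambda\,\cdot\,)$ both sides carry the factor $\lambda^{s-1}$ precisely when $d=2$, which is exactly what pins the derivative gain at $1-\frac2q$.

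The one point demanding care is the justification of the fractional chain rule for the genuinely non-smooth $\phi$, whose derivative $(q-1)|t|^{q-2}$ is only H\"older (not Lipschitz) near the origin when $2<q<3$. A route that sidesteps this is to argue directly from the Gagliardo characterization $\|\phi(f)\|_{\mathring H^s}^2\sim \iint |x-y|^{-(d+2s)}\,|\phi(f(x))-\phi(f(y))|^2\,dx\,dy$: the elementary pointwise bound $|\phi(a)-\phi(b)|\le (q-1)\big(|a|^{q-2}+|b|^{q-2}\big)|a-b|$ (from the mean value theorem, since $\phi\in C^1$) reduces the double integral to one in which H\"older's inequality splits off $\|f\|_{L^{2q}}^{q-2}$ and leaves a fractional seminorm of $f$, with the identical exponent balance. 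I expect either route to pose no essential difficulty beyond careful bookkeeping of the H\"older exponents.
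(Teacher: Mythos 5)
Your proposal is correct. Note first that the paper itself gives no proof of this lemma --- it is quoted verbatim from \cite{HKR1} with a citation --- so any complete argument here is a genuine addition. Your first route (fractional chain rule plus homogeneous Sobolev embedding) is sound: the exponents $r_1=\frac{2q}{q-2}$, $r_2=q$ and the identity $\||f|^{q-2}\|_{L^{2q/(q-2)}}=\|f\|_{L^{2q}}^{q-2}$ check out, the hypothesis needed for the chain rule (that $|\phi'(\tau a+(1-\tau)b)|\lesssim |\phi'(a)|+|\phi'(b)|$, which holds since $|t|^{q-2}$ is nondecreasing in $|t|$ for $q\ge 2$) is satisfied, and the embedding $\mathring{H}^{s+1-2/q}(\mathbb{R}^2)\hookrightarrow \mathring{W}^{s,q}$ is exactly Hardy--Littlewood--Sobolev applied to $\Lambda^s f$. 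Your second route is essentially the argument of the cited reference; the one step you leave implicit deserves a name. After the pointwise bound and H\"older in $x$ with exponents $\frac{q}{q-2},\frac{q}{2}$, what remains is
$$
\Big\|\int \frac{|f(\cdot)-f(y)|^2}{|\cdot-y|^{2+2s}}\,dy\Big\|_{L^{q/2}}^{1/2},
$$
which is the Strichartz square function, not a Gagliardo seminorm; to control it by $\|f\|_{\mathring{H}^{s+1-2/q}}$ you either invoke the Triebel--Lizorkin characterization $\mathring{F}^s_{q,2}\sim \mathring{W}^{s,q}$ or, more elementarily, apply Minkowski's integral inequality (legitimate since $q/2\ge 1$) to bound it by $\|f\|_{\mathring{B}^s_{q,2}}$ and then use the Besov embedding $\mathring{H}^{s+1-2/q}=\mathring{B}^{s+1-2/q}_{2,2}\hookrightarrow \mathring{B}^s_{q,2}$. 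With that clause supplied, either of your two routes is a complete proof, and the scaling check you record correctly confirms that the exponent $s+1-\frac2q$ is forced in dimension two.
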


\begin{proof}[Proof of Theorem \ref{p1b}] Multiplying (\ref{Geq}) by $G|G|^{q-2}$ and
integrating with respect to $x$ over $\mathbb{R}^2$, we obtain
$$
\frac1q \frac{d}{dt} \|G\|_{L^q}^q + \int G |G|^{q-2} \Lambda G \,dx = -\int G |G|^{q-2} \nabla\cdot [R, u] \theta\,dx.
$$
The dissipative part admits the lower bound
$$
\int G |G|^{q-2} \Lambda G \,dx \ge C \int |\Lambda^{\frac12} (|G|^{\frac{q}{2}})|^2 \ge C \|G\|_{L^{2q}}^q,
$$
where $C$ is a constant depending on $q$ only. When $\gamma=0$, we take
$$
s\ge \sigma, \quad q\in \left(2, \frac{4}{2\sigma+1}\right], \quad  s+1-\frac{2}{q} =\frac12.
$$
In the case when $\gamma>0$, we take $s>\sigma$. By H\"{o}lder's inequality,
$$
K=\left|\int G |G|^{q-2} \nabla\cdot [\mathcal{R}, u] \theta \right| \le \|G |G|^{q-2}\|_{\mathring{H}^s} \, \|[\mathcal{R}, u] \theta\|_{\mathring{H}^{1-s}}.
$$
By Lemma \ref{little},
$$
\|G |G|^{q-2}\|_{H^s} \le C \|G\|_{\mathring{H}^{s+1-\frac2{q}}} \|G\|_{L^{2q}}^{q-2} =C\, \|\Lambda^{\frac12} G\|_{L^2} \, \|G\|_{L^{2q}}^{q-2}.
$$
By Corollary \ref{bone}, for $1-s\le 1-\sigma$ or $s\ge \sigma$
$$
\|[\mathcal{R}, u] \theta\|_{H^{1-s}} \le C\,\|\omega\|_{L^2} (\|\theta\|_{L^{p_3}} +\|\theta\|_{L^{\frac{2}{1-\sigma}}}).
$$
Therefore, by Theorem \ref{globalw}.
$$
\|[\mathcal{R}, u] \theta\|_{H^{1-s}} \le C\, B(t).
$$
Combining the estimates above, we obtain
$$
\frac{d}{dt} \|G\|_{L^q}^q + C_q \|G\|_{L^{2q}}^q \le C\,B(t)\, \|\Lambda^{\frac12} G\|_{L^2} \, \|G\|_{L^{2q}}^{q-2}.
$$
Splitting the right-hand side by Young's inequality and using the bound in (\ref{Gbd}), we obtain (\ref{Gq}). (\ref{Oq}) follows from (\ref{Gq}) together with $\|\mathcal{R}\theta \|_{L^q}\le \|\theta_0\|_{L^q}$. This completes the proof of Theorem \ref{p1b}.
\end{proof}

\vskip .1in
\subsection{Global bound for $\|G\|_{\tilde{L}^r_t B^s_{q,1}}$}

This subsection presents a global bound on $G$ in the space-time space $\tilde{L}^r_t B^s_{q,1}$. The precise theorem can be stated as follows.

\begin{thm} \label{ltb}
Consider (\ref{GBou}) with $0\le \sigma <\frac14$ and $\gamma\ge 0$. Assume that $(\omega_0, \theta_0)$  satisfies the conditions in Theorem \ref{major1}. Let $(\omega, \theta)$ be the corresponding solution of (\ref{GBou}). Let $r$, $q$ and $s$ satisfy
$$
r\in [1,\infty], \quad s<1-\sigma, \quad \frac{2}{1-\sigma} < q < \frac{4}{1+2\sigma}.
$$
In the case when $\gamma=0$, we can take $q=4/(1+2\sigma)$. Then, for any $t>0$,
\begin{eqnarray} \label{ddd}
\|G\|_{\tilde{L}^r_t B^{s}_{q,1}} \le B(t).
\end{eqnarray}
\end{thm}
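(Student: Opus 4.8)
The plan is to estimate $G$ in the space-time Besov norm $\tilde{L}^r_t B^s_{q,1}$ by applying the Littlewood-Paley operator $\Delta_j$ to the equation \eqref{Geq} for $G$, running an $L^q$ energy estimate on each dyadic block, and then summing the resulting bounds with the appropriate weight $2^{sj}$. Because the equation $\partial_t G + u\cdot\nabla G + \Lambda G = -[\mathcal{R},u\cdot\nabla]\theta$ carries the critical dissipation $\Lambda$, the key mechanism is the \emph{lower bound for the fractional dissipation on each frequency block}: a standard positivity estimate (of the type $\int |\Delta_j G|^{q-2}\Delta_j G\,\Lambda \Delta_j G\,dx \ge c\,2^j \|\Delta_j G\|_{L^q}^q$) gives exponential damping at rate $2^j$, which after Duhamel and taking $\ell^1$ in $j$ converts the source term's regularity into the claimed $B^s_{q,1}$ control. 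The smoothing from this dissipation is exactly what allows one to close in a $B^s_{q,1}$ space with $s$ below the critical threshold $1-\sigma$.

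First I would localize \eqref{Geq} in frequency and control the transport term's commutator $[\Delta_j, u\cdot\nabla]G$ by the standard commutator estimate, reducing the advection contribution to a term involving $\|\nabla u\|$ or rather $\|\Lambda^{1-\sigma}u\|\sim\|\omega\|$ (since here $u=\nabla^\perp\Lambda^{-2+\sigma}(\log(I-\Delta))^\gamma\omega$ is more singular than in the critical case, this is where the restriction $\sigma<\tfrac14$ and the lower bound $q>\tfrac{2}{1-\sigma}$ enter through Bernstein and Hölder). Second, I would bound the forcing $[\mathcal{R},u\cdot\nabla]\theta = \nabla\cdot[\mathcal{R},u]\theta$ in the relevant Besov norm using Corollary \ref{bone}: its estimate \eqref{spe0} controls $\|[\mathcal{R},u]\theta\|_{B^{s'}_{q,q}}$ by $\|\omega\|_{L^q}\|\theta\|_{B^{s'+\sigma-1}_{\infty,q}}$ plus a product term, and the $L^q$ bound on $\omega$ comes from Theorem \ref{p1b} (valid on the required range $q<\tfrac{4}{1+2\sigma}$) while the $\theta$-norms are controlled by the transport-preserved $L^p$ bounds on $\theta$. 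Third, I would assemble the per-block Duhamel bound, multiply by $2^{sj}$, sum in $j$, and take the $L^r_t$ norm, using that the $\ell^1$ summation in the definition of $B^s_{q,1}$ is absorbed by the geometric factor $2^{(s-(1-\sigma))j}$ left over after trading one power of $2^{-j}$ from the integrated dissipation against the $\Lambda^{-1+\sigma}$ in $u$ (hence the sharp requirement $s<1-\sigma$).

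The main obstacle I anticipate is \textbf{controlling the forcing term at the sharp summability exponent $1$}. The target space $B^s_{q,1}$ demands $\ell^1$ summability of the dyadic pieces, whereas the natural output of the commutator estimate in Corollary \ref{bone} is an $\ell^q$ (or $\ell^\infty$) type bound; bridging this gap requires that the dissipative gain $2^{-j}$ combined with the frequency weight $2^{sj}$ produce a summable geometric series, which forces $s$ strictly below $1-\sigma$ and, crucially, forces $\sigma<\tfrac14$ rather than merely $\sigma<\tfrac12$. The extra loss relative to Theorem \ref{globalw} comes from needing \emph{two} derivatives' worth of room — one consumed by the divergence $\nabla\cdot$ acting on the commutator and one by the singular factor $\Lambda^{-1+\sigma}$ in the velocity — so that the effective regularity of the source only beats the dissipation when $\sigma<\tfrac14$. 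A secondary technical point is that the $\tilde{L}^r_t$ (time-integrated-then-summed) structure must be respected throughout, so all the frequency sums should be performed after taking the $L^r_t$ norm on each block, invoking Minkowski's inequality in the correct order.
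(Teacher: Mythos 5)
Your outline follows essentially the same route as the paper: localize \eqref{Geq} with $\Delta_j$, perform an $L^q$ estimate on each block using the generalized Bernstein lower bound $\int \Delta_j G|\Delta_j G|^{q-2}\Lambda\Delta_j G \ge C2^j\|\Delta_j G\|_{L^q}^q$, bound the transport commutator $[\Delta_j,u\cdot\nabla]G$ by a Bony decomposition paid for in $\|u\|_{\mathring{B}^{1-\sigma-\epsilon}_{q,\infty}}\lesssim\|\omega\|_{L^q}$ (with $\|\omega\|_{L^q}\le B(t)$ supplied by Theorem \ref{p1b}), bound the forcing $[\mathcal{R},u\cdot\nabla]\theta$ by the argument of Proposition \ref{pp1}/Corollary \ref{bone}, then Duhamel, $L^r_t$, multiply by $2^{js}$ and sum. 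You also correctly locate the origin of $\sigma<\frac14$ in the compatibility of $q>\frac{2}{1-\sigma}$ (needed so that $-1+\sigma+\epsilon+\frac2q<0$) with $q<\frac{4}{1+2\sigma}$ (needed for the $L^q$ bound on $\omega$).

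The one step that would fail as literally described is the final summation. The transport commutator does not merely produce a summable geometric series: after Duhamel and weighting it yields a term of the form
$$
K=C\sum_{j\ge -1}2^{j(-1+\sigma+\epsilon+\frac2q)}\,B(t)\Bigl[2^{js}\|\Delta_j G\|_{L^r_tL^q}+\|G\|_{\widetilde{L}^r_tB^s_{q,1}}\Bigr],
$$
i.e.\ the unknown norm $\|G\|_{\widetilde{L}^r_tB^s_{q,1}}$ reappears on the right multiplied by $B(t)$, which is large, so simply ``summing in $j$'' gives $\|G\|\le C+CB(t)\|G\|$ and does not close. The paper closes this by choosing a frequency threshold $N=N(t)$ with $C2^{N(-1+\sigma+\epsilon+\frac2q)}B(t)\le\frac14$, absorbing the tail $j>N$ into the left-hand side and bounding the head $j\le N$ by $B(t)2^{sN}$ using the already-known $L^q$ bound for $G$ (see \eqref{Gm}--\eqref{ke}). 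You need this (or an equivalent absorption device) to finish; everything else in your plan matches the paper's proof.
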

\begin{proof}
Let $j\ge -1$ be an integer. Applying $\Delta_j$ to \eqref{ggg} yields
$$
\partial_t \Delta_j G + u\cdot\nabla \Delta_j G + \Lambda \Delta_j G = -[\Delta_j, u\cdot \nabla] G - \Delta_j [\mathcal{R}, u\cdot\nabla]\theta.
$$
Taking the inner product with $\Delta_j G |\Delta_j G|^{q-2}$, we have
$$
\frac1q \frac{d}{dt} \|\Delta_j G\|_q^q + \int \Delta_j G |\Delta_j G|^{q-2} \Lambda \Delta_j G = J_1 + J_2,
$$
where
\begin{eqnarray*}
&& J_1 = - \int [\Delta_j, u\cdot \nabla] G \, \Delta_j G |\Delta_j G|^{q-2},\\
&& J_2 = - \int \Delta_j [\mathcal{R}, u\cdot\nabla]\theta \, \Delta_j G |\Delta_j G|^{q-2}.
\end{eqnarray*}
The dissipative part can be bounded below by
\begin{eqnarray*}
\int \Delta_j G |\Delta_j G|^{q-2}\, \Lambda \Delta_j G \ge C 2^j \|\Delta_j G\|_q^q,
\end{eqnarray*}
where $C$ is a  constant depending on $q$ only.  To estimate $J_1$, we write
\begin{eqnarray*}
[\Delta_j, u\cdot \nabla] G  =J_{11} + J_{12} + J_{13} + J_{14} + J_{15}
\end{eqnarray*}
with
\begin{eqnarray*}
&& J_{11} = \sum_{|j-k|\le 2} [\Delta_j, S_{k-1} u\cdot\nabla] \Delta_k G, \\
&& J_{12} = \sum_{|j-k|\le 2} (S_{k-1} u -S_j u)\cdot\nabla \Delta_j\Delta_k G, \\
&& J_{13} =  S_j u \cdot\nabla \Delta_j G, \\
&& J_{14} = \sum_{|j-k|\le 2} \Delta_j(\Delta_k u \cdot\nabla S_{k-1} G), \\
&& J_{15} = \sum_{k\ge j-1} \Delta_j(\Delta_k u \widetilde{\Delta}_k G).
\end{eqnarray*}
Since $\nabla\cdot u =0$, we have
$$
\int J_{13} |\Delta_j G|^{q-2} \Delta_j G =0.
$$
By H\"{o}lder's inequality,
$$
\left| \int J_{11} |\Delta_j G|^{q-2} \Delta_j G\right| \le \|J_{11}\|_{L^q} \|\Delta_j G\|_{L^q}^{q-1}.
$$
We write the commutator in terms of the integral,
$$
J_{11} = \int \Phi_j(x-y) \left(S_{k-1} u(y) -S_{k-1} u(x)\right) \cdot\nabla \Delta_k G(y) \,dy,
$$
where $\Phi_j$ is the kernel of the operator $\Delta_j$ and more details can be found in the Appendix. As in the proof of Lemma 3.3, we have, for any $\epsilon>0$,
\begin{eqnarray*}
\|J_{11}\|_{L^q} \le \||x|^{1-\sigma-\epsilon} \Psi_j(x)\|_{L^1}\, \|S_{k-1} u\|_{\mathring{B}^{1-\sigma-\epsilon}_{q,\infty}}\, \|\nabla \Delta_k G\|_{L^\infty}.
\end{eqnarray*}
Throughout the rest of this proof, $\epsilon>0$ is taken to be a small number such that
$$
\frac2q+ \sigma +\epsilon -1 <0.
$$
By the definition of $\Phi_j$ and Bernstein's inequality (see Appendix), we have
\begin{eqnarray*}
\|J_{11}\|_{L^q} &\le&  \||x|^{1-\sigma-\epsilon} \Psi_0(x)\|_{L^1} \, 2^{j(\sigma+\epsilon)} \|S_{j-1} u\|_{\mathring{B}^{1-\sigma-\epsilon}_{q,\infty}} \|\Delta_j G\|_{L^\infty} \\
&\le& C\, 2^{j(\sigma+\epsilon+\frac{2}{q})} \|\omega\|_{L^q} \, \|\Delta_j G\|_{L^q}.
\end{eqnarray*}
For $j\ge j_0$ with $j_0=2$,
\begin{eqnarray*}
\|J_{12}\|_{L^q} &\le& C \|\Delta_j u\|_{L^q} \|\nabla \Delta_j G\|_{L^\infty}\\
&\le& C 2^{j(\sigma+\epsilon+\frac{2}{q})} \, \|\omega\|_{L^q} \, \|\Delta_j G\|_{L^q}.
\end{eqnarray*}
Similarly, for $j\ge j_0$ with $j_0=2$,
\begin{eqnarray*}
\|J_{14}\|_{L^q} &\le& C \|\Delta_j u\|_{L^q} \|\nabla S_{j-1} G\|_{L^\infty}\\
&\le& C 2^{j(\sigma+\epsilon+\frac{2}{q})} \, \|\Lambda^{1-\sigma-\epsilon} \Delta_j u\|_{L^q}
\sum_{m\le j-2} 2^{(m-j)(1+\frac{2}{q})} \|\Delta_m G\|_{L^q}\\
&\le&  C 2^{j(\sigma+\epsilon+\frac{2}{q})} \,\|\omega\|_{L^q} \, \sum_{m\le j-2} 2^{(m-j)(1+\frac{2}{q})} \|\Delta_m G\|_{L^q}.
\end{eqnarray*}
$J_{15}$ be bounded by
\begin{eqnarray*}
\|J_{15}\|_{L^q} &\le&  C 2^{j(\sigma+\epsilon+\frac{2}{q})} \, \sum_{k\ge j-1} \|\Lambda^{1-\sigma-\epsilon} \Delta_k u\|_{L^q} 2^{(j-k)(1-\sigma-\epsilon-\frac{2}{q})} \, \|\Delta_k G\|_{L^2}\\
&\le&  C 2^{j(\sigma+\epsilon+\frac{2}{q})} \,\|\omega\|_{L^q} \, \sum_{k\ge j-1} 2^{(j-k)(1-\sigma-\epsilon-\frac{2}{q})} \, \|\Delta_k G\|_{L^2}.
\end{eqnarray*}
Thus, we have obtained that
\begin{eqnarray*}
\|J_1\|_{L^q} &\le&  C\,2^{j(\sigma+\epsilon+\frac{2}{q})} \, \|\omega\|_{L^q}  \Big[\|\Delta_j G\|_{L^2} + \sum_{m\le j-2} 2^{(m-j)(1+\frac{2}{q})} \|\Delta_m G\|_{L^q} \\
&& + \sum_{k\ge j-1} 2^{(j-k)(1-\sigma-\epsilon-\frac{2}{q})} \, \|\Delta_k G\|_{L^2}\Big].
\end{eqnarray*}
For $0\le s <1 + \frac2{q}$, we have
\begin{eqnarray*}
\sum_{m\le j-2} 2^{(m-j)(1+\frac{2}{q})} \|\Delta_m G\|_{L^q} &=& 2^{-js} \sum_{m\le j-2} 2^{(m-j)(1+\frac{2}{q}-s)} 2^{ms} \|\Delta_m G\|_{L^q} \\
&=& 2^{-js}\, \|G\|_{B^s_{q,1}}.
\end{eqnarray*}
Similarly,
\begin{eqnarray*}
\sum_{k\ge j-1} 2^{(j-k)(1-\sigma-\epsilon-\frac{2}{q})} \, \|\Delta_k G\|_{L^2} \le 2^{-js}\, \|G\|_{B^s_{q,1}}.
\end{eqnarray*}
Therefore,
$$
\|J_1\|_{L^q} \le C\,2^{j(\sigma+\epsilon+\frac{2}{q})} \, \|\omega\|_{L^q}  \Big[\|\Delta_j G\|_{L^2} + 2^{-js}\, \|G\|_{B^s_{q,1}}\Big].
$$
By H\"{o}lder's inequality and an argument as in the proof of Proposition \ref{pp1},
\begin{eqnarray*}
|J_2| &\le& \|\Delta_j [R, u\cdot\nabla]\theta\|_{L^q} \,
\|\Delta_j G\|_{L^q}^{q-1} \\
&\le& C\, 2^{j(\sigma+\epsilon)} \|\omega\|_{L^q} \, \|\Delta_j\theta\|_{L^\infty} \, \|\Delta_j G\|_{L^q}^{q-1}.
\end{eqnarray*}
Collecting the estimates, we have
\begin{eqnarray*}
&& \frac{d}{dt} \|\Delta_j G\|_{L^q} + C\, 2^j  \|\Delta_j G\|_q  \le  C\, 2^{j(\sigma+\epsilon)} \|\omega\|_{L^q} \, \|\theta_0\|_{L^\infty} \\
&& \qquad\quad +\, C\,2^{j(\sigma+\epsilon+\frac{2}{q})} \, \|\omega\|_{L^q}  \Big[\|\Delta_j G\|_{L^2} + 2^{-js}\, \|G\|_{B^s_{q,1}}\Big].
\end{eqnarray*}
Integrating in time and using the fact that $\|\omega\|_{L^q} \le B(t)$, we have
\begin{eqnarray*}
\|\Delta_j G(t)\|_{L^q} &\le& e^{-2^j t}\|\Delta_j G(0)\|_{L^q} +  C\, 2^{j(\sigma+\epsilon-1)} \|\theta_0\| \, B(t) \\
&& + C\,2^{j(\sigma+\epsilon+\frac{2}{q})} \,B(t) \int_0^t e^{-2^j(t-s)}\Big[\|\Delta_j G\|_{L^2} + 2^{-js}\, \|G\|_{B^s_{q,1}}\Big]\,ds.
\end{eqnarray*}
Taking $L^r$-norm in time and applying Young's inequality, we obtain
\begin{eqnarray*}
\|\Delta_j G\|_{L^r_t L^q} &\le& C\, 2^{-\frac1r j} \|\Delta_j G(0)\|_{L^q} + C\, 2^{j(-1+ \sigma+\epsilon)} \|\theta_0\|_{L^\infty}\, B(t) \\
&& + C\, 2^{j(-1+\sigma+\epsilon +\frac{2}{q})} B(t) \Big[\|\Delta_j G\|_{L^r_t L^q} + 2^{-js}\, \|G\|_{\widetilde{L}^r_t B^s_{q,1}}\Big].
\end{eqnarray*}
Multiplying $2^{js}$, summing over $j\ge -1$ and using the fact $s<1-\sigma$, we obtain
\begin{eqnarray} \label{Gm}
\|G\|_{\widetilde{L}^r B^s_{q,1}} &\le& C\,\|G(0)\|_{B^{s-\frac1r}_{q,1}} + C\, \|\theta_0\|_{L^\infty} B_1(t) + K,
\end{eqnarray}
where
$$
K = C\, \sum_{j\ge -1} 2^{j(-1+\sigma+\epsilon +\frac{2}{q})} B(t) \Big[2^{js}\,\|\Delta_j G\|_{L^r_t L^q} +  \|G\|_{\widetilde{L}^r_t B^s_{q,1}}\Big].
$$
We choose $N$ such that
$$
C\, 2^{N(-1+\sigma+\epsilon +\frac{2}{q})} B(t)  \le \frac14
$$
and decompose the sum in $K$ into two parts: $j\le N$ and $j>N$. Using the fact that $\|G\|_{L^q}$ is bounded, the sum for $j\le N$ can be bounded by $B(t) 2^{s N}$ for a smooth function $B(t)$. The sum for $j>N$ is bounded by $\frac12 \|G\|_{\widetilde{L}^r B^s_{q,1}}$. That is,
\begin{equation}\label{ke}
K \le  B(t) 2^{s N} + \frac12 \|G\|_{\widetilde{L}^r B^s_{q,1}}.
\end{equation}
Inserting (\ref{ke}) in (\ref{Gm}) yields
\eqref{ddd}. This completes the proof of Theorem \ref{ltb}.
\end{proof}

\subsection{Bounds for $\|\omega\|_{B^{0,\gamma}_{\infty,1}}$ and $\|\theta\|_{B^{0,\gamma}_{\infty,1}}$}

This subsection provides global bounds for $\|\omega\|_{B^{0,\gamma}_{\infty,1}}$ and $\|\theta\|_{B^{0,\gamma}_{\infty,1}}$.

\begin{thm} \label{tim}
Consider (\ref{GBou}) with $\sigma=0$ and $\gamma\ge 0$. Assume that $(\omega_0, \theta_0)$  satisfies the conditions in Theorem \ref{major1}, especially $(\omega_0, \theta_0) \in B^{0,\gamma}_{\infty,1}$. Let $(\omega, \theta)$ be the corresponding solution of (\ref{GBou}). Then, for any $t>0$,
\begin{equation} \label{b4b}
\|\omega\|_{L^1_tB^{0,\gamma}_{\infty,1}} \le B(t), \quad \|\theta\|_{L^1_t B^{0,\gamma}_{\infty,1}} \le B(t).
\end{equation}
\end{thm}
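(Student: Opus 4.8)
The plan is to reduce the entire statement to a single transport estimate for $\theta$, exploiting the decomposition $\omega = G + \mathcal{R}\theta$ together with the bounds already proved. First I would record two elementary facts. Since $\Delta_j\mathcal{R}=\mathcal{R}\Delta_j$ acts on each dyadic block as convolution against a kernel of integrable decay (Proposition 3.1 of \cite{HKR1}), $\mathcal{R}$ is bounded on $B^{0,\gamma}_{\infty,1}$, so $\|\omega\|_{B^{0,\gamma}_{\infty,1}}\le\|G\|_{B^{0,\gamma}_{\infty,1}}+\|\theta\|_{B^{0,\gamma}_{\infty,1}}$. Second, because $\sigma=0$ the operator $\nabla\nabla^\perp\Delta^{-1}$ is of order zero and $(\log(I-\Delta))^\gamma$ multiplies the $j$-th block by a factor comparable to $(1+j)^\gamma$, whence
$$
\|\nabla u\|_{L^\infty}\le\|\nabla u\|_{B^0_{\infty,1}}\le C\,\|(\log(I-\Delta))^\gamma\omega\|_{B^0_{\infty,1}}\le C\,\|\omega\|_{B^{0,\gamma}_{\infty,1}}.
$$
This inequality explains why the logarithmic scale $B^{0,\gamma}_{\infty,1}$ is the natural space here and is exactly where the hypothesis $\sigma=0$ enters.

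Next I would dispose of $G$ for free. By Theorem \ref{ltb} with $r=1$ and $\sigma=0$ one has $\|G\|_{\widetilde{L}^1_t B^{s}_{q,1}}\le B(t)$ for some $q\in(2,4]$ and some $s\in(2/q,1)$. Bernstein's inequality gives $\|\Delta_j G\|_{L^\infty}\le C\,2^{2j/q}\|\Delta_j G\|_{L^q}$, and since $(1+j)^\gamma 2^{-j(s-2/q)}$ is bounded uniformly in $j$ when $s>2/q$, summation yields the embedding $B^{s}_{q,1}\hookrightarrow B^{0,\gamma}_{\infty,1}$ and hence
$$
\|G\|_{L^1_t B^{0,\gamma}_{\infty,1}}\le C\,\|G\|_{L^1_t B^{s}_{q,1}}\le B(t).
$$
Consequently both bounds in (\ref{b4b}) follow once $\|\theta\|_{L^1_t B^{0,\gamma}_{\infty,1}}\le B(t)$ is established.

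The heart of the argument is therefore the transport equation $\p_t\theta+u\cdot\nabla\theta=0$. I would run a Littlewood--Paley estimate: apply $\Delta_j$, use $\nabla\cdot u=0$ to annihilate the transport term in $L^\infty$, and estimate the commutator $[\Delta_j,u\cdot\nabla]\theta$ by the paraproduct/kernel method already used in Proposition \ref{pp1} and Theorem \ref{ltb}. Summing against the weights $(1+j)^\gamma$ one should reach the linear (Vishik--Hmidi--Keraani type) bound
$$
\|\theta(t)\|_{B^{0,\gamma}_{\infty,1}}\le C\,\|\theta_0\|_{B^{0,\gamma}_{\infty,1}}\Big(1+\int_0^t\|\nabla u(\tau)\|_{L^\infty}\,d\tau\Big),
$$
the essential feature being that the dependence on the accumulated velocity gradient is \emph{linear}, not exponential. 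Feeding in the two facts above gives $\int_0^t\|\nabla u\|_{L^\infty}\le C\int_0^t\|\omega\|_{B^{0,\gamma}_{\infty,1}}\le B(t)+\int_0^t\|\theta\|_{B^{0,\gamma}_{\infty,1}}$, so with $Z(t)=\int_0^t\|\theta\|_{B^{0,\gamma}_{\infty,1}}$ the displayed inequality becomes the linear differential inequality $Z'(t)\le C\,\|\theta_0\|_{B^{0,\gamma}_{\infty,1}}(1+B(t)+Z(t))$. Gronwall's lemma then yields $Z(t)\le B(t)$ for every $t>0$, and the $\omega$-bound follows from $\omega=G+\mathcal{R}\theta$.

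The main obstacle I anticipate is the $\theta$-transport estimate in the logarithmic space. One must prove the \emph{linear}-in-$\int\|\nabla u\|_{L^\infty}$ growth, since the naive differential Gronwall argument only produces exponential growth and would close solely for short time; and one must thread the weights $(1+j)^\gamma$ through the paraproduct decomposition of the commutator so that the $(\log)^\gamma$ loss carried by $u$ is matched exactly by the gain built into the $B^{0,\gamma}_{\infty,1}$ norm. It is precisely the linearity that allows the self-consistent feedback of $\nabla u$ through $\theta$ to be absorbed by a linear Gronwall inequality, thereby delivering a global-in-time bound.
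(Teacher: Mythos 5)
Your overall architecture coincides with the paper's: control $G$ in $L^1_t B^{0,\gamma}_{\infty,1}$ via Theorem \ref{ltb} plus Bernstein, write $\omega = G + \mathcal{R}\theta$, bound $\|\nabla u\|_{L^\infty}$ by the $B^{0,\gamma}_{\infty,1}$ norm of $\omega$, invoke a linear-in-$V(t)$ transport estimate for $\theta$, and close with Gronwall. The one genuine gap is the step you yourself flag as the heart of the matter: the estimate
$$
\|\theta(t)\|_{B^{0,\gamma}_{\infty,1}}\le C\,\|\theta_0\|_{B^{0,\gamma}_{\infty,1}}\Bigl(1+\int_0^t\|\nabla u(\tau)\|_{L^\infty}\,d\tau\Bigr)
$$
is asserted, not proved, and the method you sketch for it --- apply $\Delta_j$, estimate the commutator $[\Delta_j,u\cdot\nabla]\theta$, sum against the weights $(1+|j|)^\gamma$ --- is precisely the ``naive'' route that, as you note, produces $e^{C\int_0^t\|\nabla u\|_{L^\infty}}$ rather than linear growth. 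With exponential growth the feedback loop $\int_0^t\|\nabla u\|_{L^\infty}\lesssim B(t)+\int_0^t\|\theta\|_{B^{0,\gamma}_{\infty,1}}$ does not close globally, so this cannot be waved through.

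The paper supplies the missing ingredient as Lemma \ref{ggd}, by a Vishik-type decomposition rather than a commutator estimate: write $\theta=\sum_k\theta_k$, where $\theta_k$ is transported from the localized datum $\Delta_k\theta_0$; the standard Besov transport estimate in $B^{\pm 1/2}_{\rho,\infty}$ yields the off-diagonal decay $\|\Delta_j\theta_k\|_{L^\rho}\le C\,2^{-\frac12|j-k|}\|\Delta_k\theta_0\|_{L^\rho}\,e^{CV(t)}$; the double sum $\sum_{j,k}(1+|j|)^\gamma\|\Delta_j\theta_k\|_{L^\rho}$ is then split into $|j-k|\ge N$, where the factor $2^{-(\frac12-\delta)N}$ absorbs $e^{CV(t)}$, and $|j-k|<N$, where the plain $L^\rho$ maximum principle costs only a factor $N$; choosing $N\sim V(t)$ gives the linear bound. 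The weight-threading you worry about is handled there by the elementary inequality $(1+|j|)^\gamma(1+|k|)^{-\gamma}2^{-\delta|j-k|}\le C$. Two smaller imprecisions in your write-up: $\mathcal{R}$ is not bounded on the block $\Delta_{-1}$ in $L^\infty$, so both $\|\mathcal{R}\theta\|_{B^{0,\gamma}_{\infty,1}}$ and $\|\nabla u\|_{L^\infty}$ require the extra low-frequency terms $\|\theta_0\|_{L^2}$ and $\|\omega\|_{L^2}$ respectively, exactly as in the paper.
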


\begin{proof}
Taking $r=1$ and $\frac2q<s<1-\sigma$, we obtain from Theorem \ref{ltb} that
$$
\|G\|_{L^1_t B^s_{q,1}} \le B(t).
$$
This bound especially imply that
$$
\|G\|_{L^1_t B^{0,\gamma}_{\infty,1}} \le B(t).
$$
In fact, by Bernstein's inequality,
$$
\|G\|_{ B^{0,\gamma}_{\infty,1}}  = \sum_{j\ge -1} (1+|j|)^\gamma \|\Delta_j G\|_{L^\infty} \le \sum_{j\ge -1} (1+|j|)^\gamma 2^{\frac{2}{q} j} \|\Delta_j G\|_{L^q} \le C \|G\|_{B^s_{q,1}}.
$$
Since $G=\omega- R\theta$,
$$
\|\omega\|_{B^{0,\gamma}_{\infty,1}} \le \|G\|_{B^{0,\gamma}_{\infty,1}} + \|\mathcal{R}\theta\|_{B^{0,\gamma}_{\infty,1}}.
$$
In addition,
$$
\|\mathcal{R}\theta\|_{B^{0,\gamma}_{\infty,1}} \le \|\Delta_{-1}\theta\|_{L^\infty} + \|\theta\|_{B^{0,\gamma}_{\infty,1}} \le \|\theta_0\|_{L^2} + \|\theta\|_{B^{0,\gamma}_{\infty,1}}
$$
By Lemma \ref{ggd} below and
$$
\|\nabla u\|_{L^\infty} \le \|\omega\|_{L^2} + \|\omega\|_{B^{0,\gamma}_{\infty,1}},
$$
we obtain
$$
\|\theta\|_{B^{0,\gamma}_{\infty,1}}  \le \|\theta_0\|_{B^{0,\gamma}_{\infty,1}} \left(1+ \int_0^t \|\omega\|_{L^2}\,dt\right) +  \|\theta_0\|_{B^{0,\gamma}_{\infty,1}}  \int_0^t \|\omega\|_{B^{0,\gamma}_{\infty,1}}\,dt.
$$
Therefore, we have obtained
\begin{eqnarray*}
\|\omega\|_{B^{0,\gamma}_{\infty,1}} &\le& \|G\|_{B^{0,\gamma}_{\infty,1}} + \|\theta_0\|_{L^2} + \|\theta_0\|_{B^{0,\gamma}_{\infty,1}} \left(1+ \int_0^t \|\omega\|_{L^2}\,dt\right)\\
&& + \,\|\theta_0\|_{B^{0,\gamma}_{\infty,1}}  \int_0^t \|\omega\|_{B^{0,\gamma}_{\infty,1}}\,dt.
\end{eqnarray*}
If we set $Z(t) = \|\omega\|_{L^1_tB^{0,\gamma}_{\infty,1}}$, then
$$
Z(t) \le B(t) + \|\theta_0\|_{B^{0,\gamma}_{\infty,1}} \int_0^t Z(\tau)\,d\tau.
$$
(\ref{b4b}) then follows from Gronwall's inequality.
\end{proof}

\vskip .1in
The following lemma has been used in the proof of Theorem \ref{tim}.
\begin{lemma} \label{ggd}
Let $\theta$ satisfy
$$
\partial_t \theta + u\cdot\nabla\theta +  \Lambda \theta =f.
$$
Let $\gamma\ge 0$ and $\rho\in [1,\infty]$. Then, for any $t>0$,
$$
\|\theta(t)\|_{B^{0,\gamma}_{\rho,1}} \le \left(\|\theta_0\|_{B^{0,\gamma}_{\rho,1}} + \|f\|_{L^1_tB^{0,\gamma}_{\rho,1}}\right) \left(1+\int_0^t \|\nabla u\|_{L^\infty} \,dt \right).
$$
\end{lemma}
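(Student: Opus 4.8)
The plan is to run a Littlewood--Paley energy argument at the level of each dyadic block and then sum against the logarithmic weight $(1+|j|)^\gamma$. First I would apply $\Delta_j$ to the equation, obtaining
$$
\p_t \theta_j + u\cdot\nabla\theta_j + \Lambda\theta_j = \Delta_j f - R_j, \qquad \theta_j:=\Delta_j\theta,\ \ R_j:=[\Delta_j,u\cdot\nabla]\theta .
$$
Multiplying by $|\theta_j|^{\rho-2}\theta_j$ and integrating, the transport term drops because $\nabla\cdot u=0$, while the dissipation is nonnegative by the Córdoba--Córdoba-type positivity inequality $\int (\Lambda\theta_j)\,|\theta_j|^{\rho-2}\theta_j\,dx\ge 0$ (for $\rho=\infty$ one uses the maximum principle instead; and the localized sharpening $\gtrsim 2^j\|\theta_j\|_{L^\rho}^\rho$, as in the proof of Theorem \ref{ltb}, is kept in reserve). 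Dividing out one power of $\|\theta_j\|_{L^\rho}$ and integrating in time yields
$$
\|\theta_j(t)\|_{L^\rho}\le \|\theta_j(0)\|_{L^\rho}+\int_0^t\|\Delta_j f\|_{L^\rho}\,d\tau+\int_0^t\|R_j\|_{L^\rho}\,d\tau .
$$

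The second step is the commutator bound for $R_j$, which I would obtain exactly by the Bony paraproduct decomposition already used for $[\mathcal{R},u]$ in Proposition \ref{pp1} and for $[\Delta_j,u\cdot\nabla]$ in the estimate of $J_1$ in the proof of Theorem \ref{ltb} (specialized to a Lipschitz velocity, i.e. $\sigma=0$): the paraproduct piece carries the commutator cancellation and, after Lemma \ref{newl}, produces a $2^{-j}$ gain balanced by $\|\nabla\Delta_k\theta\|_{L^\infty}\sim 2^k\|\Delta_k\theta\|_{L^\rho}$, while the remaining paraproduct and remainder pieces are controlled directly using $\|\Delta_k u\|_{L^\infty}\lesssim 2^{-k}\|\nabla u\|_{L^\infty}$. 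The outcome is a near-diagonal, summable bound of the form $\|R_j\|_{L^\rho}\le C\|\nabla u\|_{L^\infty}\sum_k g_{j-k}\|\Delta_k\theta\|_{L^\rho}$ with $\sum_m|g_m|<\infty$. Multiplying the time-integrated inequality by $(1+|j|)^\gamma$, summing over $j\ge-1$, and using that the weight is slowly varying (so $(1+|k|)^\gamma\approx(1+|j|)^\gamma$ on the support of $g_{j-k}$) together with Young's inequality for the convolution in $j$, I arrive at
$$
\|\theta(t)\|_{B^{0,\gamma}_{\rho,1}}\le \|\theta_0\|_{B^{0,\gamma}_{\rho,1}}+\|f\|_{L^1_tB^{0,\gamma}_{\rho,1}}+C\int_0^t\|\nabla u\|_{L^\infty}\,\|\theta\|_{B^{0,\gamma}_{\rho,1}}\,d\tau .
$$

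At this point a direct application of Gronwall's inequality would only produce the \emph{exponential} factor $\exp\!\big(C\int_0^t\|\nabla u\|_{L^\infty}\big)$, whereas the statement asserts the \emph{linear} factor $1+\int_0^t\|\nabla u\|_{L^\infty}$. Upgrading exponential to linear is the heart of the matter and the step I expect to be the main obstacle; it is a manifestation of the fact that the index $s=0$ is borderline, and it requires a Vishik-type refinement (as in \cite{HKR1}) rather than a naive Gronwall. The mechanism is that, per unit of accumulated deformation $\int\|\nabla u\|_{L^\infty}$, the commutator transfers information only across $O(1)$ neighboring Littlewood--Paley scales, and the logarithmic weight is slowly varying across them. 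Carrying this out by splitting the frequencies at a cutoff $N$ tied to $\int_0^t\|\nabla u\|_{L^\infty}$, estimating the finitely many low modes through the basic $L^\rho$ bound and controlling the high modes via the near-diagonal tail of $g$ (assisted, if needed, by the dissipative smoothing $2^j$ retained above), distributes the deformation over the $\sim N$ active scales additively rather than multiplicatively; this yields the claimed linear dependence and completes the proof.
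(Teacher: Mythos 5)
Your setup is fine and you have correctly located the crux: the a priori inequality
\begin{equation*}
\|\theta(t)\|_{B^{0,\gamma}_{\rho,1}}\le \|\theta_0\|_{B^{0,\gamma}_{\rho,1}}+\|f\|_{L^1_tB^{0,\gamma}_{\rho,1}}+C\int_0^t\|\nabla u\|_{L^\infty}\,\|\theta\|_{B^{0,\gamma}_{\rho,1}}\,d\tau
\end{equation*}
only yields the exponential factor, and some Vishik-type device is needed to get the linear one. But the mechanism you describe for that upgrade --- cutting the frequencies \emph{of the solution} $\theta$ at a level $N\sim\int_0^t\|\nabla u\|_{L^\infty}$, handling low modes by the $L^\rho$ maximum principle and high modes ``via the near-diagonal tail of $g$'' --- does not close, and it is not what Vishik's argument does. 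For the single solution, the near-diagonal interactions $g_{j-k}$ with $j,k>N$ are $O(1)$ and re-couple the high-frequency part to the full norm at every instant, so summing over $j>N$ just reproduces the same Gronwall inequality with no gain in $N$; nothing in your setup produces the factor $2^{-cN}$ needed to beat $e^{CV(t)}$. The dissipative term $2^j\|\Delta_j\theta\|_{L^\rho}$ cannot rescue this either: it controls time-integrated quantities, not $\sup_t$, and the commutator forcing still carries the full norm. In short, a single differential inequality on the full solution is genuinely consistent with exponential growth, and no frequency truncation of that one solution can contradict it.

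The idea you are missing is a \emph{superposition over frequency-localized Cauchy problems}: write $\theta=\sum_k\theta_k$ where each $\theta_k$ solves $\partial_t\theta_k+u\cdot\nabla\theta_k+\Lambda\theta_k=\Delta_k f$ with data $\Delta_k\theta_0$, i.e.\ decompose the \emph{data and forcing}, not the solution. The paper then propagates the standard (exponentially growing) $B^{\pm 1/2}_{\rho,\infty}$ estimates for each $\theta_k$, which --- because the data of $\theta_k$ live at frequency $2^k$ --- give the off-diagonal decay $\|\Delta_j\theta_k\|_{L^\rho}\le C\,2^{-\frac12|j-k|}\bigl(\|\Delta_k\theta_0\|_{L^\rho}+\|\Delta_kf\|_{L^1_tL^\rho}\bigr)e^{CV(t)}$. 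Splitting $\sum_{j,k}$ at $|j-k|=N$, the far-off-diagonal part carries $e^{CV(t)}$ but is damped by $2^{-(\frac12-\delta)N}$, while the near-diagonal part is bounded by $C\,N$ times the data norm using only the $L^\rho$ maximum principle for each $\theta_k$ (plus the embedding $B^{0,\gamma}_{\rho,1}\hookrightarrow B^{0,1+\gamma}_{\rho,\infty}$ to keep the weighted near-diagonal sum linear in $N$ rather than $N^{1+\gamma}$ --- a point your sketch also does not address). Choosing $N\sim V(t)$ gives the linear bound. Without the decomposition of the data, the off-diagonal smallness in $|j-k|$ that makes this work is simply not available.
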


\begin{proof} Theorem 4.5 of \cite[p.432]{HKR2} states a similar result for the Besov space $B^0_{\rho,1}$. The generalization to the Besov space $B^{0,\gamma}_{\rho,1}$ presented here is not completely trivial. For an integer $k\ge -1$, consider the solution $\theta_k$ of the equation $$
\partial_t \theta_k + u\cdot\nabla\theta_k + \Lambda \theta_k =\Delta_k f, \quad \theta(x,0)= \Delta_k \theta_0.
$$
For any $s\in (-1, 1)$ and $\rho\in [1,\infty]$, we have the standard Besov estimate
\begin{equation}\label{bjjj}
\|\theta_k\|_{B^{s}_{\rho,\infty}} \le C \left(\|\Delta_k \theta_0\|_{B^{s}_{\rho,\infty}} + \|\Delta_kf\|_{L^1_tB^{s}_{\rho,\infty}}\right) e^{CV(t)},
\end{equation}
where $V(t) = \|\nabla u\|_{L^1_t L^\infty}$. Setting $s=\pm \frac12$ in \eqref{bjjj}, we obtain
\begin{equation}\label{bjj}
\|\Delta_j \theta_k\|_{L^\rho} \le C 2^{-\frac12|j-k|} \left(\|\Delta_k \theta_0\|_{L^\rho} + \|\Delta_kf\|_{L^1_t L^\rho}\right) e^{C V(t)}.
\end{equation}
 Clearly $\theta =\sum \Delta_k \theta$ and thus
\begin{eqnarray*}
\|\theta(t)\|_{B^{0,\gamma}_{\rho,1}} = \sum_{j=-1}^\infty (1+|j|)^\gamma \|\Delta_j \theta\|_{L^\rho}
\le \sum_{j=-1}^\infty \sum_{k=-1}^\infty (1+|j|)^\gamma \|\Delta_j \theta_k\|_{L^\rho}.
\end{eqnarray*}
For an integer $N$ to be fixed later, we decompose the double summation in the inequality above into two parts: $J_1$ for $|j-k|\ge N$ and $J_2$ for $|j-k| <N$.
Invoking \eqref{bjj}, we have
\begin{eqnarray*}
J_1 &\le& C\, \sum_{j=-1}^\infty\sum_{|j-k|\ge N} (1+|j|)^\gamma 2^{-\frac12|j-k|} \left(\|\Delta_k \theta_0\|_{L^\rho} + \|\Delta_k f\|_{L^1_t L^\rho}\right) e^{C V(t)} \\
&=& C\, \sum_{j=-1}^\infty\sum_{|j-k|\ge N} \frac{(1+|j|)^\gamma}{(1+|k|)^\gamma} 2^{-\frac12|j-k|} (1+|k|)^\gamma\left(\|\Delta_k \theta_0\|_{L^\rho} + \|\Delta_k f\|_{L^1_t L^\rho}\right) e^{C V(t)}.
\end{eqnarray*}
In the summation above, in the case when $k\ge j+N$, we certainly have $(1+|j|)^\gamma/(1+|k|)^\gamma \le 1$. In the case when $j\ge k+N$, we have
$$
\frac{(1+|j|)^\gamma}{(1+|k|)^\gamma} 2^{-\delta (j-k)} \le C
$$
for any fixed $\delta>0$, where $C$ is independent of $j$ and $k$.  Therefore, for $0<\delta<\frac12$, we have
$$
J_1 \le 2^{-(\frac12-\delta) N} e^{C V(t)} \left( \|\theta_0\|_{B^{0,\gamma}_{\rho,1}} + \|f\|_{L^1_t B^{0,\gamma}_{\rho,1}} \right).
$$
To bound $J_2$, we handle $\|\Delta_j \theta_k\|_{L^\rho}$ differently. Through a standard $L^\rho$-estimate,
$$
\|\Delta_j \theta_k\|_{L^\rho} \le \|\theta_k\|_{L^\rho} \le \|\Delta_k \theta_0\|_{L^\rho} + \|\Delta_kf\|_{L^1_t L^\rho}.
$$
Therefore,
$$
J_2 = \sum_{j=-1}^\infty \sum_{|j-k|< N} (1+|j|)^\gamma \left( \|\Delta_k \theta_0\|_{L^\rho} + \|\Delta_kf\|_{L^1_t L^\rho}\right).
$$
We can show that, for each $k$ satisfying $|j-k| <N$,
\begin{eqnarray*}
&& \sum_{j=-1}^\infty (1+|j|)^\gamma \left( \|\Delta_k \theta_0\|_{L^\rho} + \|\Delta_kf\|_{L^1_t L^\rho}\right)
\\&& \qquad \qquad\le C \left( \|\theta_0\|_{B^{0,\gamma}_{\rho,1}} + \|f\|_{L^1_t B^{0,\gamma}_{\rho,1}} \right) + \sup_{j\ge -1} (1+|j|)^{1+\gamma} \left( \|\Delta_k \theta_0\|_{L^\rho} + \|\Delta_kf\|_{L^1_t L^\rho}\right) \\
\\&& \qquad \qquad\le C \left( \|\theta_0\|_{B^{0,\gamma}_{\rho,1}} + \|f\|_{L^1_t B^{0,\gamma}_{\rho,1}} \right).
\end{eqnarray*}
where $C$'s are constants independent of $N$. In the last inequality we have used the fact that $B^{0,\gamma}_{\rho,1} \hookrightarrow B^{0,1+\gamma}_{\rho,\infty}$. The inequality above can be established by writing $j=k+m$ with $0<m<N$ and split the summation for $j$ into two parts: one part for $j\le m$ and the other for $j> m$. We omit further details. Therefore,
$$
J_2 \le C\, N \left( \|\theta_0\|_{B^{0,\gamma}_{\rho,1}} + \|f\|_{L^1_t B^{0,\gamma}_{\rho,1}} \right).
$$
Consequently,
\begin{eqnarray*}
\|\theta(t)\|_{B^{0,\gamma}_{\rho,1}} &\le& J_1 +J_2 \\
&\le& 2^{-(\frac12-\delta) N} e^{C V(t)} \left( \|\theta_0\|_{B^{0,\gamma}_{\rho,1}} + \|f\|_{L^1_t B^{0,\gamma}_{\rho,1}} \right) + C\, N \left( \|\theta_0\|_{B^{0,\gamma}_{\rho,1}} + \|f\|_{L^1_t B^{0,\gamma}_{\rho,1}} \right).
\end{eqnarray*}
The desired inequality follows by taking $N$ such that $2^{-(\frac12-\delta) N} e^{C V(t)}$ is of order $1$. This completes the proof of Lemma \ref{ggd}.
\end{proof}

\vskip .1in
\subsection{Global bound $\|\omega\|_{L^q}$ for any $q >2$}

The goal of this subsection is to establish a global bound for $\|\omega\|_{L^q}$ for any $q >2$.

\begin{thm}
Consider (\ref{GBou}) with $\sigma=0$ and $\gamma\ge 0$.
Assume $(\omega_0, \theta_0)$ satisfies the conditions stated in Theorem \ref{major1}. Let $(\omega, \theta)$ be the corresponding solution. Then, for any $q\ge 2$,
\begin{equation}\label{what}
\|\omega(t)\|_{L^q} \le B(t).
\end{equation}
\end{thm}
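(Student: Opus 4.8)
The plan is to bound $\omega$ through the good unknown $G=\omega-\mathcal{R}\theta$ and to exploit that, because $\sigma=0$, the velocity is globally Lipschitz. Two facts about the data should be recorded first. Since $B^{0,\gamma}_{\infty,1}\hookrightarrow L^\infty$, the hypotheses of Theorem~\ref{major1} force $\omega_0,\theta_0\in L^2\cap L^\infty$, hence $\omega_0,\theta_0\in L^p$ for every $p\in[2,\infty]$; and since $\theta$ solves the pure transport equation $\p_t\theta+u\cdot\nabla\theta=0$ with $\nabla\cdot u=0$, all its Lebesgue norms are conserved, so $\|\theta(t)\|_{L^p}=\|\theta_0\|_{L^p}$. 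Consequently $\|\mathcal{R}\theta\|_{L^q}\le C_q\|\theta_0\|_{L^q}$ for $1<q<\infty$, and because $\omega=G+\mathcal{R}\theta$ it suffices to bound $\|G(t)\|_{L^q}$ for each fixed $q\in[2,\infty)$; the case $q=2$ is already Theorem~\ref{globalw}, so the content lies in $q>2$.

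Next I would turn the estimates already established into a global Lipschitz bound on $u$. For $\sigma=0$ one has $\nabla u=\nabla\nabla^\perp\Delta^{-1}(\log(I-\Delta))^\gamma\omega$, and the logarithm is absorbed by the Besov-type norm, giving $\|\nabla u\|_{L^\infty}\le C(\|\omega\|_{L^2}+\|\omega\|_{B^{0,\gamma}_{\infty,1}})$, which is exactly the bound used in the proof of Theorem~\ref{tim}. Combining Theorem~\ref{globalw} (so that $\|\omega\|_{L^2}\le B(t)$) with Theorem~\ref{tim} (so that $\|\omega\|_{L^1_tB^{0,\gamma}_{\infty,1}}\le B(t)$) then yields $\int_0^t\|\nabla u(\tau)\|_{L^\infty}\,d\tau\le B(t)$. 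This is the single place where $\sigma=0$ is genuinely used: for $\sigma>0$ no $L^\infty$ control of $\nabla u$ is available.

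With these preliminaries I would run the $L^q$ estimate on \eqref{Geq}: multiply by $G|G|^{q-2}$ and integrate. The transport term vanishes since $\nabla\cdot u=0$, and the dissipation is nonnegative by the C\'ordoba--C\'ordoba inequality $\int \Lambda G\,G|G|^{q-2}\,dx\ge 0$, so $\frac{d}{dt}\|G\|_{L^q}\le\|[\mathcal{R},u\cdot\nabla]\theta\|_{L^q}$. The heart of the matter is a Calder\'on-type commutator estimate. Writing the commutator as a singular integral and integrating by parts in $y$, the divergence-free condition annihilates the term proportional to $\nabla\cdot u$ and leaves the kernel $\sum_i(\p_i k)(x-y)\,(u_i(y)-u_i(x))$, where $k$ is the kernel of $\mathcal{R}$, so that $\p_i k$ is homogeneous of degree $-3$; since $|u_i(y)-u_i(x)|\le\|\nabla u\|_{L^\infty}|x-y|$, this is a genuine Calder\'on--Zygmund kernel of size $\|\nabla u\|_{L^\infty}|x-y|^{-2}$, whence $\|[\mathcal{R},u\cdot\nabla]\theta\|_{L^q}\le C_q\|\nabla u\|_{L^\infty}\|\theta\|_{L^q}=C_q\|\nabla u\|_{L^\infty}\|\theta_0\|_{L^q}$ for $1<q<\infty$. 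Integrating in time and using the Lipschitz bound gives $\|G(t)\|_{L^q}\le\|G_0\|_{L^q}+C_q\|\theta_0\|_{L^q}\int_0^t\|\nabla u\|_{L^\infty}\,d\tau\le B(t)$, with $G_0=\omega_0-\mathcal{R}\theta_0\in L^q$ by the first paragraph; then \eqref{what} follows at once from $\omega=G+\mathcal{R}\theta$.

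The step I expect to require the most care is the commutator bound with $\|\nabla u\|_{L^\infty}$ in place of a finite-$q$ norm of $\nabla u$. A naive paraproduct decomposition produces pieces in which $u$ sits at high frequency and appears to demand $\|\nabla u\|_{L^q}\sim\|\omega\|_{B^{0,\gamma}_{q,1}}$, a quantity we do not control for large $q$ precisely because of the logarithm. The resolution is to keep $u$ in $L^\infty$-based norms throughout, using $\|\Delta_k u\|_{L^\infty}\lesssim 2^{-k}\|\nabla u\|_{L^\infty}$ together with the square-function (Triebel--Lizorkin) characterization of $L^q$, so that each paraproduct piece is controlled by $\|\nabla u\|_{L^\infty}\|\theta\|_{L^q}$ and the logarithm never has to act in a finite-$q$ space; the explicit kernel computation above accomplishes the same thing more directly and is the version I would write out in full.
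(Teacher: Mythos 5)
Your proof is correct in substance, but it takes a genuinely different route from the paper's. The paper bounds the forcing term via its Proposition 4.6 (a Littlewood--Paley/paraproduct commutator estimate), which gives $\|[\mathcal{R},u\cdot\nabla]\theta\|_{B^0_{q,1}}\le C\|\omega\|_{L^q}\|\theta\|_{B^{0,\gamma}_{\infty,1}}$; it then closes with Gronwall's inequality, using the $L^1_t$ integrability of $\|\theta\|_{B^{0,\gamma}_{\infty,1}}$ supplied by Theorem 4.4. You instead place all the regularity on $u$: you use the Lipschitz bound $\|\nabla u\|_{L^\infty}\lesssim\|\omega\|_{L^2}+\|\omega\|_{B^{0,\gamma}_{\infty,1}}$ (whose time integral is controlled by Theorems 1.2 and 4.4 applied to $\omega$ rather than $\theta$) together with the conservation of $\|\theta\|_{L^q}$, and you bound the commutator by $C_q\|\nabla u\|_{L^\infty}\|\theta_0\|_{L^q}$, which closes by direct time integration with no Gronwall step. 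Both arguments ultimately rest on the same input (the $L^1_t B^{0,\gamma}_{\infty,1}$ bounds of Theorem 4.4), so neither is circular; yours is arguably cleaner at the final step, while the paper's is self-contained at the level of the commutator estimate. The one point you should tighten: your key inequality $\|[\mathcal{R},u\cdot\nabla]\theta\|_{L^q}\le C_q\|\nabla u\|_{L^\infty}\|\theta\|_{L^q}$ is the first Calder\'on commutator theorem (Calder\'on; Coifman--Meyer in higher dimensions), and the kernel-size computation you give --- showing the kernel is $O(\|\nabla u\|_{L^\infty}|x-y|^{-2})$ --- does not by itself prove $L^q$ boundedness of a singular integral, since cancellation is essential; you should cite that theorem explicitly rather than suggest the size bound ``accomplishes the same thing.'' With that citation in place, the argument is complete.
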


\begin{proof}
It is clear from \eqref{ggg} that, for any $q\ge2$,
$$
\|G\|_{L^q} \le \|G_0\|_{L^q} + \int_0^t \|[\mathcal{R}, u\cdot\nabla \theta]\|_{L^q} \,dt.
$$
According to the commutator estimate of Proposition \ref{clog} below,
$$
\|G\|_{L^q} \le \|G_0\|_{L^q} + \int_0^t \|\omega(s)\|_{L^q}\, \|\theta(s)\|_{B^{0,\gamma}_{\infty,1}} \,ds.
$$
Therefore,
$$
\|\omega(t)\|_{L^q} \le \|\theta_0\|_{L^q} + \|G_0\|_{L^q} + \int_0^t \|\omega(s)\|_{L^q}\, \|\theta(s)\|_{B^{0,\gamma}_{\infty,1}} \,ds.
$$
Gronwall's inequality combined with the bound in Theorem \ref{b4b} yields (\ref{what}). \end{proof}

\vskip .1in
\begin{prop} \label{clog}
Let $\gamma\ge 0$. Assume that $u$ and $\omega$ are related by
$$
u = \nabla^\perp \psi, \quad \Delta\psi
=(\log(I-\Delta))^\gamma \omega.
$$
Then, for any $q \ge 2$, we have
$$
\|[\mathcal{R}, u\cdot\nabla]\theta\|_{B^0_{q,1}} \le C \|\omega\|_{L^q} \|\theta\|_{B^{0,\gamma}_{\infty,1}}.
$$
\end{prop}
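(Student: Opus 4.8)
The plan is to mirror the paraproduct argument from the proof of Proposition \ref{pp1}, but to keep careful track of the logarithmic multiplier and, crucially, to route its growth onto $\theta$ rather than onto $\omega$. Writing $u=\nabla^\perp\Delta^{-1}(\log(I-\Delta))^\gamma\omega$ for the present case $\sigma=0$, the two frequency-localized bounds I will use throughout are
\[
\|\Delta_k u\|_{L^q}\le C\,2^{-k}(1+|k|)^\gamma\|\Delta_k\omega\|_{L^q},\qquad \|\nabla S_{k-1}u\|_{L^q}\le C\,(1+|k|)^\gamma\|\omega\|_{L^q}.
\]
Both follow from the Mikhlin--H\"ormander multiplier theorem: on frequencies of size $2^k$ the operator $\nabla^\perp\Delta^{-1}$ (resp.\ $\nabla\nabla^\perp\Delta^{-1}$) is an $L^q$-multiplier of order $-1$ (resp.\ $0$), while $(\log(1+|\xi|^2))^\gamma$ contributes a factor $\le C(1+|k|)^\gamma$ on that frequency range. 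This is exactly where the weight $(1+|k|)^\gamma$ is born, and it is attached to the velocity, hence to $\omega$.

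Since $\|[\mathcal{R},u\cdot\nabla]\theta\|_{B^0_{q,1}}=\sum_{j\ge-1}\|\Delta_j[\mathcal{R},u\cdot\nabla]\theta\|_{L^q}$, I will decompose $u\cdot\nabla\theta$ by Bony's formula into a low--high piece $I_1=\sum_{|k-j|\le2}\Delta_j[\mathcal{R},S_{k-1}u\cdot\nabla]\Delta_k\theta$, a high--low piece $I_2=\sum_{|k-j|\le2}\Delta_j[\mathcal{R},\Delta_k u\cdot\nabla]S_{k-1}\theta$, and a high--high remainder $I_3=\sum_{k\ge j-1}\Delta_j[\mathcal{R},\Delta_k u\cdot\nabla]\widetilde{\Delta}_k\theta$, using repeatedly that $\nabla\cdot u=0$ forces $\nabla\cdot\Delta_k u=\nabla\cdot S_{k-1}u=0$ and that $\nabla$ commutes with $\mathcal{R}$. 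For $I_1$ I exploit the commutator structure as in Proposition \ref{pp1}: representing $\mathcal{R}$ on the annulus $2^k\mathcal{A}$ as convolution with $h_k=2^{dk}h(2^kx)$ and invoking Lemma \ref{newl} in its $\delta=1$ form \eqref{pfg1}, with $f=S_{k-1}u$ and $g=\nabla\Delta_k\theta$, gives
\[
\|[\mathcal{R},S_{k-1}u\cdot\nabla]\Delta_k\theta\|_{L^q}\le C\,2^{-k}\|\nabla S_{k-1}u\|_{L^q}\|\nabla\Delta_k\theta\|_{L^\infty}\le C\,(1+|k|)^\gamma\|\omega\|_{L^q}\|\Delta_k\theta\|_{L^\infty};
\]
since $k\approx j$, summing over $j$ produces $C\|\omega\|_{L^q}\sum_k(1+|k|)^\gamma\|\Delta_k\theta\|_{L^\infty}=C\|\omega\|_{L^q}\|\theta\|_{B^{0,\gamma}_{\infty,1}}$.

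For $I_2$ and $I_3$ the commutator gain is unnecessary; what matters is preserving frequency decay so that the $\omega$-sum collapses. In $I_2$ I bound $\mathcal{R}$ directly on $L^q$ and keep the gradient on the low-frequency factor, obtaining
\[
\|\Delta_j I_2\|_{L^q}\le C\,2^{-k}(1+|k|)^\gamma\|\Delta_k\omega\|_{L^q}\sum_{m\le k-2}2^m\|\Delta_m\theta\|_{L^\infty},
\]
whereas in $I_3$ I use the divergence form $\Delta_k u\cdot\nabla(\cdot)=\nabla\cdot(\Delta_k u\,(\cdot))$ so that $\Delta_j\mathcal{R}\nabla\cdot$ yields the factor $2^j$, giving $\|\Delta_j I_3\|_{L^q}\le C\sum_{k\ge j-1}2^{j-k}(1+|k|)^\gamma\|\Delta_k\omega\|_{L^q}\|\widetilde{\Delta}_k\theta\|_{L^\infty}$. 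The decisive step, and the point I expect to be the main obstacle, is then purely combinatorial: after summing over $j$ I interchange the order of summation (summing the output frequency first), whereupon the geometric factors ($2^{-k}\sum_{m\le k-2}2^m$ in $I_2$ and $\sum_{j\le k+1}2^{j-k}$ in $I_3$) are $\lesssim 1$ and concentrate near the top frequency. This lets me replace $\|\Delta_k\omega\|_{L^q}$ by the crude bound $C\|\omega\|_{L^q}$ \emph{without} summing in $k$, so that $\|\omega\|_{L^q}$ factors out while the weight $(1+|k|)^\gamma$ is transferred onto the $\theta$-frequency, leaving $\sum_k(1+|k|)^\gamma\|\widetilde{\Delta}_k\theta\|_{L^\infty}=\|\theta\|_{B^{0,\gamma}_{\infty,1}}$. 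The subtlety is precisely that the logarithm is generated at the velocity (hence $\omega$) frequency yet must appear on $\theta$ in the final bound; only the geometric concentration of the frequency sums near $k\approx m$ (resp.\ $k\approx j$) makes this transfer legitimate and simultaneously reduces the $\omega$-contribution to its bare $L^q$ norm. The finitely many low-frequency terms ($k<3$) carry no commutator gain and are estimated directly, exactly as in Proposition \ref{pp1}.
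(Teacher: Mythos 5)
Your proposal is correct and follows essentially the same route as the paper: the same Bony decomposition into low--high, high--low, and high--high pieces, the commutator gain via Lemma \ref{newl} (in its $\delta=1$ form) only on the low--high piece, direct $L^q$-boundedness of $\mathcal{R}$ on the other two, and the same transfer of the $(1+|k|)^\gamma$ weight from the velocity frequency onto $\theta$ through the geometric decay of the frequency sums. The paper carries out the summation interchange slightly more implicitly (as a discrete convolution with an $\ell^1$ kernel $2^{-l}(1+l)^\gamma$), but this is the identical mechanism you describe.
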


\begin{proof} For any integer $j\ge -1$, we write
$$
\Delta_j [\mathcal{R}, u\cdot\nabla]\theta = J_1 + J_2 +J_3,
$$
where
\begin{eqnarray*}
J_1 &=& \sum_{|k-j|\le 2} \Delta_j (\mathcal{R} S_{k-1}u \cdot\nabla \Delta_k \theta) -\Delta_j (S_{k-1}u \cdot\nabla \mathcal{R} \Delta_k \theta), \\
J_2 &=& \sum_{|k-j|\le 2} \Delta_j (\mathcal{R} (\Delta_k u \,\cdot\nabla  S_{k-1} \theta)- \Delta_k u \cdot \nabla \mathcal{R} S_{k-1} \theta),
\\
J_3 &=& \sum_{k\ge j-1} \Delta_j (\mathcal{R} (\Delta_k  u \cdot \nabla\widetilde{\Delta}_k \theta) -\Delta_k  u \mathcal{R} \cdot \nabla\widetilde{\Delta}_k \theta).
\end{eqnarray*}
Estimating these terms in a similar fashion as in the proof of Proposition \ref{pp1}, we have $$
\|J_1\|_{L^q} \le C\, \|S_{j-1}\omega\|_{L^q} (1+|j|)^\gamma \|\Delta_j\theta\|_{L^\infty}.
$$
For $j\ge j_0$ with $j_0=2$, we have
\begin{eqnarray*}
\|J_2\|_{L^q} &\le&  C\, \|\omega\|_{L^q} 2^{-j} (1+|j|)^\gamma \|S_{j-1} \nabla \theta\|_{L^\infty} \\
&\le& C\, \|\omega\|_{L^q} \sum_{m\le j-1} \frac{2^m (1+|j|)^\gamma}{2^j (1+|m|)^\gamma} (1+|m|)^\gamma\|\Delta_m\theta\|_{L^\infty}
\end{eqnarray*}
and
\begin{eqnarray*}
\|J_3\|_{L^q} &\le&  C\,  \|\omega\|_{L^q} \sum_{k\ge j-1} 2^{(j-k)} (1+|k|)^\gamma
\|\Delta_k \theta\|_{L^\infty}.
\end{eqnarray*}
Therefore,
$$
\|[\mathcal{R}, u\cdot\nabla]\theta\|_{B^0_{q,1}} \le \sum_{j\ge -1}  \|\Delta_j[\mathcal{R}, u\cdot\nabla]\theta\|_{L^q} \le C \|\omega\|_{L^q} \|\theta\|_{B^{0,\gamma}_{\infty,1}}.
$$
This completes the proof of Proposition \ref{clog}.
\end{proof}

\vskip .4in
\section{Uniqueness}
\label{uniq}

This section proves the uniqueness part of Theorem \ref{major1}. For the sake of clarity, we state it as a theorem.

\begin{thm}\label{major1uni}
Assume that $(\omega_0, \theta_0)$ satisfies the conditions stated in Theorem \ref{major1}. Let $\sigma=0$, $\gamma\ge 0$ and $q>2$. Let $(\omega^{(1)}, \theta^{(1)})$ and $(\omega^{(2)}, \theta^{(2)})$ be two solutions of \eqref{GBou} satisfying, for any $t>0$,
$$
\omega^{(1)}, \, \omega^{(2)} \in L^2\cap L^q \cap L^1_t B^{0,\gamma}_{\infty,1}, \quad \theta^{(1)}, \theta^{(2)}\in L^2\cap L^\infty \cap L^1_tB^{0,\gamma}_{\infty,1}.
$$
Then they must coincide.
\end{thm}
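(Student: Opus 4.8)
The plan is to compare the two solutions by estimating the differences $\delta\omega=\omega^{(1)}-\omega^{(2)}$, $\delta\theta=\theta^{(1)}-\theta^{(2)}$, $\delta u=u^{(1)}-u^{(2)}$ in low-regularity spaces dictated by the structure of the system. Subtracting the two copies of \eqref{GBou} and using $\nabla\cdot\delta u=0$, the difference equations take the form
\begin{equation*}
\partial_t\delta\omega+u^{(2)}\cdot\nabla\delta\omega+\Lambda\delta\omega=-\delta u\cdot\nabla\omega^{(1)}+\partial_{x_1}\delta\theta,\qquad \partial_t\delta\theta+u^{(2)}\cdot\nabla\delta\theta=-\nabla\cdot(\delta u\,\theta^{(1)}).
\end{equation*}
Two features force me to work at negative regularity. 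First, $\theta$ carries no dissipation, so a direct $L^2$ estimate for $\delta\theta$ would meet the uncontrolled factor $\nabla\theta^{(1)}$; writing the forcing in divergence form and measuring $\delta\theta$ in $H^{-1}$ turns it into $\|\delta u\,\theta^{(1)}\|_{L^2}\le\|\delta u\|_{L^{2q/(q-2)}}\|\theta^{(1)}\|_{L^\infty}$, which uses only $\theta^{(1)}\in L^\infty$. Second, the relation $\delta u=\nabla^\perp\Delta^{-1}(\log(I-\Delta))^\gamma\delta\omega$ makes $\delta u$ only logarithmically better than one antiderivative of $\delta\omega$. The single fact I shall use repeatedly is that the hypotheses $\omega^{(i)}\in L^1_tB^{0,\gamma}_{\infty,1}$ give, via the boundedness of Riesz-type operators on $B^0_{\infty,1}$ and the definition of the logarithmic Besov space, $\nabla u^{(i)}\in L^1_tB^0_{\infty,1}\subset L^1_tL^\infty$; the velocities are therefore effectively Lipschitz (integrated in time), so all transport commutators are harmless.

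As in the a priori estimates, the buoyancy coupling is best handled through $G=\omega-\mathcal R\theta$. Setting $\delta G=\delta\omega-\mathcal R\delta\theta$ and subtracting the two copies of \eqref{Geq} gives
\begin{equation*}
\partial_t\delta G+u^{(2)}\cdot\nabla\delta G+\Lambda\delta G=-\delta u\cdot\nabla G^{(1)}-[\mathcal R,u^{(2)}\cdot\nabla]\delta\theta-[\mathcal R,\delta u\cdot\nabla]\theta^{(1)},
\end{equation*}
in which the singular source $\partial_{x_1}\delta\theta$ no longer appears explicitly; it has been traded for commutators that lose no derivative. I will estimate $\delta\theta$ and $\delta G$ in the same space $H^{-1}$. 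For $\delta\theta$, pairing in $H^{-1}$, the transport term is bounded by $\|\nabla u^{(2)}\|_{L^\infty}\|\delta\theta\|_{H^{-1}}^2$ and the forcing by $\|\delta u\|_{L^{2q/(q-2)}}\|\theta^{(1)}\|_{L^\infty}\|\delta\theta\|_{H^{-1}}$. For $\delta G$, the dissipation supplies the positive term $\|\delta G\|_{\dot H^{-1/2}}^2$, the transport term is again absorbed by $\|\nabla u^{(2)}\|_{L^\infty}$, and the two commutators are handled by a version of Proposition \ref{pp1}: since $[\mathcal R,w\cdot\nabla]$ is an operator of order zero whose symbol is controlled by $\nabla w$, one gets $\|[\mathcal R,u^{(2)}\cdot\nabla]\delta\theta\|_{H^{-1}}\lesssim\|\nabla u^{(2)}\|_{L^\infty}\|\delta\theta\|_{H^{-1}}$, while the remaining commutator $[\mathcal R,\delta u\cdot\nabla]\theta^{(1)}$ is reorganized in divergence form so as to cost only $\|\delta u\|_{L^{2q/(q-2)}}\|\theta^{(1)}\|_{L^\infty}$. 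Because $-1$ is the endpoint index for the $H^{-1}$ transport estimate, I work in the inhomogeneous space and use the $L^2$ bounds of Theorems \ref{globalw} and \ref{p1b} to control the low frequencies.

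The hard part will be the stretching-type term $\delta u\cdot\nabla G^{(1)}=\nabla\cdot(\delta u\,G^{(1)})$, since $G^{(1)}$ carries no spatial smoothness and $\delta u$ is only logarithmically better than $\Lambda^{-1}\delta\omega$. In $H^{-1}$ this is bounded by $\|\delta u\,G^{(1)}\|_{L^2}\le\|\delta u\|_{L^{2q/(q-2)}}\|G^{(1)}\|_{L^q}$; here I must (i) estimate the high-integrability norm $\|\delta u\|_{L^{2q/(q-2)}}$ by interpolating between the dissipation gain $\|\delta G\|_{\dot H^{-1/2}}$ and the control $\|\delta\theta\|_{H^{-1}}+\|\delta G\|_{H^{-1}}$, absorbing the loss of $(\log(I-\Delta))^\gamma$ into the small surplus of regularity available because $q>2$, and (ii) invoke the extra integrability $G^{(1)}\in L^q_tL^{2q}$ furnished by \eqref{Gq} to render the time integral finite. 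Splitting the resulting product by Young's inequality lets the top-order part be absorbed into $\|\delta G\|_{\dot H^{-1/2}}^2$. Adding the $\delta\theta$ and $\delta G$ estimates then produces a closed inequality for $Y(t)=\|\delta\theta(t)\|_{H^{-1}}^2+\|\delta G(t)\|_{H^{-1}}^2$ of the form $\frac{d}{dt}Y\le a(t)\,Y$ (or, should the logarithmic loss persist, an Osgood inequality $\frac{d}{dt}Y\le a(t)\,Y\log(e+1/Y)$) with $a\in L^1_t$ assembled from $\|\nabla u^{(i)}\|_{L^\infty}$, $\|\omega^{(i)}\|_{L^q}$, $\|G^{(i)}\|_{L^{2q}}$ and $\|\theta^{(i)}\|_{L^\infty}$. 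Since $Y(0)=0$, Gronwall's (respectively Osgood's) inequality forces $Y\equiv0$, whence $\delta\theta\equiv0$ and $\delta G\equiv0$, and therefore $\omega^{(1)}\equiv\omega^{(2)}$ and $\theta^{(1)}\equiv\theta^{(2)}$.
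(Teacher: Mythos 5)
Your overall strategy (difference estimates at negative regularity, closed by a Gronwall/Osgood argument) is reasonable, and your choice to work with $\delta G=\delta\omega-\mathcal R\,\delta\theta$ in $H^{-1}$ is a genuine alternative to the paper's route, which instead passes to the quasi-velocity $v$ with $\nabla\times v=\omega$, writes the difference equations in the velocity form \eqref{vv}, measures $\theta^{(1)}-\theta^{(2)}$ in $B^{-1}_{2,\infty}$ and $v^{(1)}-v^{(2)}$ in $B^{0}_{2,\infty}$ (Lemmas \ref{ggo1} and \ref{ggo2}), and closes with the logarithmic interpolation $\|v\|_{L^2}\le C\|v\|_{B^0_{2,\infty}}\log\left(1+\|v\|_{H^1}/\|v\|_{B^0_{2,\infty}}\right)$ followed by Osgood's lemma. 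However, the step you yourself flag as the hard part --- item (i) --- does not go through as stated, and this is a genuine gap rather than a technicality.

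The problem is the claimed control of $\|\delta u\|_{L^{2q/(q-2)}}$. Since $\sigma=0$, one has $\delta u=\nabla^\perp\Delta^{-1}(\log(I-\Delta))^\gamma\,\delta\omega$ with $\delta\omega=\delta G+\mathcal R\,\delta\theta$, so $\delta u$ is (up to the logarithm) exactly one derivative better than $\delta\omega$. The contribution of $\mathcal R\,\delta\theta$ to $\delta u$ therefore lives, under your hypothesis $\delta\theta\in H^{-1}$, only in $L^2$; the $\theta$-equation is pure transport, so there is no dissipative smoothing of $\delta\theta$ to interpolate with, and in $\mathbb R^2$ one cannot pass from $L^2$ to $L^{2q/(q-2)}$ for any $q>2$. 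Even the $\delta G$ contribution, for which the dissipation does give $\delta G\in L^2_t\mathring{H}^{-1/2}$, only yields $\delta u\in L^2_t\mathring{H}^{1/2}\hookrightarrow L^2_tL^4$, which reaches the exponent $2q/(q-2)$ only when $q\ge4$, whereas the theorem is asserted for every $q>2$. Hence the quantity you need in order to absorb the stretching term $\nabla\cdot(\delta u\,G^{(1)})$ is simply not dominated by $\|\delta\theta\|_{H^{-1}}+\|\delta G\|_{H^{-1}}$ plus the dissipation, no matter how the logarithm is distributed. The repair is to renounce any gain of integrability for $\delta u$: bound $\|\delta u\,G^{(1)}\|_{L^2}\le\|\delta u\|_{L^2}\|G^{(1)}\|_{L^\infty}$, noting that $G^{(1)}\in L^1_tL^\infty$ already follows from $\omega^{(1)},\theta^{(1)}\in L^1_tB^{0,\gamma}_{\infty,1}$, and accept that $\|\delta u\|_{L^2}$ exceeds the $H^{-1}$-type norm of $\delta\omega$ by a logarithm (both because of $(\log(I-\Delta))^\gamma$ and because $\Lambda^{-1}$ does not map inhomogeneous $H^{-1}$ into $L^2$ at low frequencies). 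This is exactly why the paper estimates everything through $\|v\|_{B^0_{2,\infty}}$, controls $\|v\|_{L^2}$ by the logarithmic interpolation with $\|v\|_{H^1}\le\|\omega^{(1)}\|_{L^2}+\|\omega^{(2)}\|_{L^2}$, and must conclude with Osgood rather than Gronwall: your parenthetical ``should the logarithmic loss persist'' is not optional, it is forced.
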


\begin{proof}[Proof of Theorem \ref{major1uni}]

Let $u^{(1)}$ and $u^{(2)}$ be the corresponding velocity fields, namely
$$
u^{(j)} =\nabla^\perp \psi^{(j)}, \quad \Delta \psi^{(j)} =  (\log(I-\Delta))^\gamma \omega^{(j)}, \quad j=1,2.
$$
Let $v^{(j)}= (\log(I-\Delta))^\gamma u^{(j)}$, $j=1,2$. Then the differences
$$
u = u^{(1)}- u^{(2)}, \qquad \theta = \theta^{(1)}-\theta^{(2)}, \quad v =v^{(1)} -v^{(2)}, \qquad p= p^{(1)}-p^{(2)}
$$
satisfy
\begin{eqnarray*} \label{difeq}
&& \partial_t v + u^{(2)}\cdot\nabla v + u\cdot \nabla v^{(1)} - \sum_{j=1}^2 \left(u_j^{(2)} \nabla v_j + u_j \nabla v_j^{(1)}\right) + \Lambda v = -\nabla p + \theta e_2, \\
&& \partial_t \theta + u\cdot\nabla \theta^{(1)} + u^{(2)} \cdot\nabla \theta =0.
\end{eqnarray*}
By Lemmas \ref{ggo1} and \ref{ggo2} below,
\begin{eqnarray*}
\|\theta(t)\|_{B^{-1}_{2,\infty}}&\le& \|\theta(0)\|_{B^{-1}_{2,\infty}} + \, C\, \int_0^t \|v(s)\|_{L^2}\, \|\theta^{(1)}(s)\|_{B^{0,\gamma}_{\infty,1}} \,ds\\
&&  + \,C\,\int_0^t \|\omega^{(2)}(s)\|_{B^{0,\gamma}_{\infty,1}}  \|\theta(s)\|_{B^{-1}_{2,\infty}}\,ds,
\end{eqnarray*}
\begin{eqnarray*}
\|v(t)\|_{B^0_{2,\infty}} &\le& \|v(0)\|_{B^0_{2,\infty}} +\|\theta(s)\|_{B^{-1}_{2,\infty}} \\
&& + \,C\, \int_0^t \|v(s)\|_{L^2} \, \left(\|\omega^{(1)}(s)\|_{B^{0,\gamma}_{\infty,1}} +\|\omega^{(2)}(s)\|_{B^{0,\gamma}_{\infty,1}}\right)\,ds.
\end{eqnarray*}
To further the estimate, we bound $\|v\|_{L^2}$ in terms of $\|v\|_{B^0_{2,\infty}}$ by the interpolation inequality (see Lemma 6.11 of \cite[p.2173]{HKR1})
$$
\|v\|_{L^2} \le C\,\|v\|_{B^0_{2,\infty}} \log\left(1+ \frac{\|v\|_{H^1}}{\|v\|_{B^0_{2,\infty}}}\right)
$$
and use the fact that $\|v\|_{H^1} \le \|\omega^{(1)}\|_{L^2} + \|\omega^{(2)}\|_{L^2}$. Combining the inequalities above and setting
$$
Y(t) = \|\theta(t)\|_{B^{-1}_{2,\infty}} + \|v(t)\|_{B^0_{2,\infty}},
$$
we obtain
\begin{eqnarray*}
Y(t) \le 2\, Y(0) + C\, \int_0^t D_1(s) Y(s) \, \log(1+ D_2(s)/Y(s)) \,ds
\end{eqnarray*}
where $
D_1(s) = \|\theta^{(1)}(s)\|_{B^{0,\gamma}_{\infty,1}} + \|\omega^{(1)}(s)\|_{B^{0,\gamma}_{\infty,1}} +\|\omega^{(2)}(s)\|_{B^{0,\gamma}_{\infty,1}},
$ and $D_2(s)= \|\omega^{(1)}(s)\|_{L^2} + \|\omega^{(2)}(s)\|_{L^2}$. Since $D_1$ and $D_2$ are integrable, we obtain by Osgood's inequality that $Y(t)\equiv 0$. A statement of Osgood's theorem is provided in the Appendix. This completes the proof of Theorem \ref{major1uni}.
\end{proof}

\vskip .1in
\begin{lemma} \label{ggo1}
Assume that $\theta$ satisfies
\begin{equation} \label{theq}
\partial_t \theta + u\cdot\nabla \theta^{(1)} + u^{(2)} \cdot\nabla \theta =0,
\end{equation}
where $u$, $\theta^{(1)}$ and $u^{(2)}$ are as defined in the proof of Theorem \ref{major1uni}. Then, for any $t>0$,
\begin{eqnarray} \label{ggi}
\|\theta(t)\|_{B^{-1}_{2,\infty}}&\le& \|\theta(0)\|_{B^{-1}_{2,\infty}} + \, C\, \int_0^t \|v(s)\|_{L^2}\, \|\theta^{(1)}(s)\|_{B^{0,\gamma}_{\infty,1}} \,ds\nonumber\\
&&  + \,C\,\int_0^t \|\omega^{(2)}(s)\|_{B^{0,\gamma}_{\infty,1}}  \|\theta(s)\|_{B^{-1}_{2,\infty}}\,ds.
\end{eqnarray}
where $v$ is as defined in the proof of Theorem \ref{major1uni}.
\end{lemma}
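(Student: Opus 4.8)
The plan is to run a frequency-localized $L^2$ energy estimate directly on the transport equation \eqref{theq}, treating $u^{(2)}$ as the transport velocity and $u\cdot\nabla\theta^{(1)}$ as a forcing term, and to assemble the $B^{-1}_{2,\infty}$ norm only at the very end by multiplying the $j$-th block estimate by $2^{-j}$ and taking the supremum over $j$. Concretely, I apply $\Delta_j$ to \eqref{theq}, giving
$$
\partial_t\Delta_j\theta + u^{(2)}\cdot\nabla\Delta_j\theta = -[\Delta_j,\,u^{(2)}\cdot\nabla]\theta - \Delta_j(u\cdot\nabla\theta^{(1)}),
$$
pair with $\Delta_j\theta$ in $L^2$, and use $\nabla\cdot u^{(2)}=0$ to annihilate the transport term. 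This produces the differential inequality
$$
\frac{d}{dt}\|\Delta_j\theta\|_{L^2} \le \|[\Delta_j,\,u^{(2)}\cdot\nabla]\theta\|_{L^2} + \|\Delta_j(u\cdot\nabla\theta^{(1)})\|_{L^2}.
$$
Multiplying by $2^{-j}$, integrating in time, and taking $\sup_{j\ge-1}$ (bounding the supremum of the time integrals by the time integral of the suprema) reduces the lemma to the two bounds $\sup_j 2^{-j}\|[\Delta_j,u^{(2)}\cdot\nabla]\theta\|_{L^2}\le C\|\omega^{(2)}\|_{B^{0,\gamma}_{\infty,1}}\|\theta\|_{B^{-1}_{2,\infty}}$ and $\sup_j 2^{-j}\|\Delta_j(u\cdot\nabla\theta^{(1)})\|_{L^2}\le C\|v\|_{L^2}\|\theta^{(1)}\|_{B^{0,\gamma}_{\infty,1}}$, which are exactly the integrands in \eqref{ggi}.

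For the commutator term I follow the Bony-decomposition argument of Proposition \ref{pp1}, representing $[\Delta_j,u^{(2)}\cdot\nabla]\theta$ through the integral kernel of $\Delta_j$ and invoking the mean-value bound of Lemma \ref{newl}. The point is that every derivative of the velocity that appears is a block $\Delta_k\nabla u^{(2)}$, and since $u^{(2)}=\nabla^\perp\Delta^{-1}(\log(I-\Delta))^\gamma\omega^{(2)}$ one has $\|\Delta_k\nabla u^{(2)}\|_{L^\infty}\lesssim(1+|k|)^\gamma\|\Delta_k\omega^{(2)}\|_{L^\infty}$; summing these blocks against the weight $(1+|k|)^\gamma$ reproduces precisely $\|\omega^{(2)}\|_{B^{0,\gamma}_{\infty,1}}$ while the $\theta$ factors assemble into $\|\theta\|_{B^{-1}_{2,\infty}}$. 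Thus the logarithmic loss in passing from $\omega^{(2)}$ to $\nabla u^{(2)}$ is exactly absorbed by the logarithmic weight of the $B^{0,\gamma}_{\infty,1}$ norm, and this term is routine.

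The source term is the delicate one, and the decisive choice is to expand $u\cdot\nabla\theta^{(1)}$ by Bony's formula \emph{keeping the gradient on $\theta^{(1)}$}, rather than transferring the divergence onto the whole product $u\theta^{(1)}$. Recalling $u=\nabla^\perp\Delta^{-1}(\log(I-\Delta))^\gamma\omega$ and $v=\nabla^\perp\Delta^{-1}\omega$, each block satisfies $\|\Delta_k u\|_{L^2}\lesssim(1+|k|)^\gamma\|\Delta_k v\|_{L^2}$, so $u$ carries a genuine logarithmic loss at high frequency. In the low-$u$/high-$\theta^{(1)}$ paraproduct, the factor $(1+|j|)^\gamma$ from $S_{j-1}u$ pairs with $(1+|j|)^\gamma\|\Delta_j\theta^{(1)}\|_{L^\infty}\le\|\theta^{(1)}\|_{B^{0,\gamma}_{\infty,1}}$; in the high-$u$/low-$\theta^{(1)}$ paraproduct the weighted sum $\|S_{j-1}\nabla\theta^{(1)}\|_{L^\infty}\lesssim 2^{j}(1+|j|)^{-\gamma}\|\theta^{(1)}\|_{B^{0,\gamma}_{\infty,1}}$ concentrates near frequency $2^{j}$, so the gain $(1+|j|)^{-\gamma}$ cancels the loss $(1+|j|)^\gamma$ from $\Delta_j u$; for the high-high remainder I use $\nabla\cdot u=0$ to restore a divergence, convert it into the $2^{-j}$ weight of $B^{-1}_{2,\infty}$, and sum $\sum_k(1+|k|)^\gamma\|\widetilde{\Delta}_k\theta^{(1)}\|_{L^\infty}=\|\theta^{(1)}\|_{B^{0,\gamma}_{\infty,1}}$ against $\sup_k\|\Delta_k v\|_{L^2}\le\|v\|_{L^2}$. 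Collecting the three pieces yields the claimed $C\|v\|_{L^2}\|\theta^{(1)}\|_{B^{0,\gamma}_{\infty,1}}$.

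The main obstacle I anticipate is exactly this bookkeeping of the logarithmic weights in the source term. A careless integration by parts, moving $\nabla$ onto the full product $u\theta^{(1)}$, leaves the uncompensated factor $(1+|j|)^\gamma\|\Delta_j v\|_{L^2}$ in the high-$u$/low-$\theta^{(1)}$ interaction, which is \emph{not} controlled by $\|v\|_{L^2}$ alone and would force an unwanted $\|v\|_{H^1}$. The remedy — retaining the derivative on $\theta^{(1)}$ so that the $2^{m}$-weighted low-frequency sum localizes near $2^{j}$ and exposes the compensating $(1+|j|)^{-\gamma}$, together with the divergence trick for the remainder — is the crux; the negative regularity of $B^{-1}_{2,\infty}$ and the summability of the weighted $B^{0,\gamma}_{\infty,1}$ norm of $\theta^{(1)}$ are precisely what make the cancellation exact.
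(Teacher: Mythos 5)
Your proposal is correct and follows essentially the same route as the paper: a frequency-localized $L^2$ estimate on \eqref{theq}, Bony decomposition of both $\Delta_j(u^{(2)}\cdot\nabla\theta)$ (with the divergence-free transport block annihilated and the rest treated as a commutator gaining $\nabla u^{(2)}$, hence $\|\omega^{(2)}\|_{B^{0,\gamma}_{\infty,1}}$) and of $\Delta_j(u\cdot\nabla\theta^{(1)})$ with the gradient kept on $\theta^{(1)}$, so that each logarithmic loss $(1+|j|)^\gamma$ from $u$ versus $v$ is absorbed by the weight in $\|\theta^{(1)}\|_{B^{0,\gamma}_{\infty,1}}$, followed by multiplying by $2^{-j}$ and taking the supremum. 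The bookkeeping of the weights you single out as the crux is exactly the computation carried out in the paper's estimates of $J_1$, $J_2$, $J_3$ and $K_1$ through $K_5$.
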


\begin{proof}
Let $j\ge -1$. Applying $\Delta_j$ to (\ref{theq}), taking the inner product of $\Delta_j \theta$ with the resulting equation and applying H\"{o}lder's inequality, we obtain
\begin{eqnarray} \label{mmm}
\frac12 \frac{d}{dt} \|\Delta_j \theta\|^2_{L^2} \le \|\Delta_j (u\cdot\nabla \theta^{(1)}\|_{L^2} \|\Delta_j \theta\|_{L^2} - \int \Delta_j \theta \Delta_j (u^{(2)}\cdot\nabla \theta)\,dx.
\end{eqnarray}
To estimate the first term, we write
\begin{equation}\label{dd00}
\Delta_j (u\cdot\nabla \theta^{(1)}) = J_1 + J_2 + J_3,
\end{equation}
where $J_1$, $J_2$ and $J_3$ are given by
\begin{eqnarray*}
&& J_1 = \sum_{|j-k|\le 2} \Delta_j (S_{k-1}u\cdot\nabla \Delta_k \theta^{(1)}), \\
&& J_2 = \sum_{|j-k|\le 2} \Delta_j (\Delta_k u\cdot\nabla  S_{k-1}\theta^{(1)}), \\
&& J_3 = \sum_{k\ge j-1} \Delta_j (\Delta_k u\cdot\nabla \widetilde{\Delta}_k\theta^{(1)}).
\end{eqnarray*}
$J_1$, $J_2$ and $J_3$ can be estimated as follows.
\begin{eqnarray*}
\|J_1\|_{L^2} &\le& C\, 2^j \|S_{j-1} u\|_{L^2} \|\Delta_j \theta^{(1)}\|_{L^\infty} \\
&\le& C\, 2^j \|v\|_{L^2} (1+|j|)^\gamma \|\Delta_j \theta^{(1)}\|_{L^\infty} \\
&\le& C\, 2^j \,\|v\|_{L^2} \,\|\theta^{(1)}\|_{B^{0, \gamma}_{\infty, \infty}}.
\end{eqnarray*}
\begin{eqnarray*}
\|J_2\|_{L^2} &\le& C\, \|\Delta_j u\|_{L^2} \|S_{j-1} \nabla \theta\|_{L^\infty} \\
&\le& C\, \|\Delta_j v\|_{L^2} (1+|j|)^\gamma \sum_{m\le j-2} 2^m  \|\Delta_m \theta^{(1)}\|_{L^\infty} \\
&\le& C\, 2^j \,\|v\|_{B^0_{2,\infty}} \sum_{m\le j-2} \frac{2^m (1+|m|)^{-\gamma}}{2^j (1+|j|)^{-\gamma}}(1+|m|)^\gamma \|\Delta_m \theta^{(1)}\|_{L^\infty} \\
&\le& C\, 2^j \,\|v\|_{B^0_{2,\infty}} \sum_{m\le j-2} (1+|m|)^\gamma \|\Delta_m \theta^{(1)}\|_{L^\infty}\\
&\le& C\, 2^j \,\|v\|_{B^0_{2,\infty}} \, \|\theta^{(1)}\|_{B^{0,\gamma}_{\infty,1}}.
\end{eqnarray*}
\begin{eqnarray*}
\|J_3\|_{L^2} &\le& C\, 2^j \sum_{k\ge j-1} (1+|k|)^\gamma \|\Delta_k v\|_{L^2} \, \|\Delta_k \theta^{(1)}\|_{L^\infty} \\
&\le& C\, 2^j \,\|v\|_{B^0_{2,\infty}} \, \|\theta^{(1)}\|_{B^{0,\gamma}_{\infty,1}}.
\end{eqnarray*}
To estimate the second term in (\ref{mmm}), we write
\begin{equation}\label{ddee}
\Delta_j (u^{(2)}\cdot\nabla \theta) =K_1 + K_2 + K_3 + K_4 + K_5,
\end{equation}
where
\begin{eqnarray*}
&& K_1 = \sum_{|j-k|\le 2} [\Delta_j, S_{k-1}u^{(2)}\cdot\nabla] \Delta_k \theta, \\ && K_2 = \sum_{|j-k|\le 2} (S_{k-1}u^{(2)} -S_j u^{(2)})\cdot\nabla \Delta_j \Delta_k\theta, \\
&& K_3= S_j  u^{(2)} \cdot\nabla \Delta_j \theta,\\
&& K_4= \sum_{|j-k|\le 2} \Delta_j ( \Delta_k u^{(2)} \cdot\nabla S_{k-1}\theta),\\
&& K_5 = \sum_{k\ge j-1} \Delta_j (\Delta_k u^{(2)}\cdot\nabla \widetilde{\Delta}_k\theta).
\end{eqnarray*}
Correspondingly the second term in (\ref{mmm}) can be decomposed into five integrals. Since $\nabla\cdot u^{(2)} =0$,
$$
\int \Delta_j \theta  K_3 \,dx =0.
$$
Therefore, by H\"{o}lder's inequality,
$$
\left| \int \Delta_j \theta \Delta_j (u^{(2)}\cdot\nabla \theta)\,dx \right| \le \|\Delta_j \theta\|_{L^2} \left(\|K_1\|_{L^2} + \|K_2\|_{L^2} + \|K_4\|_{L^2} + \|K_5\|_{L^2} \right).
$$
By a standard commutator estimate,
\begin{eqnarray*}
\|K_1\|_{L^2} &\le& C\, \|x \Phi_j(x)\|_{L^1} \, \|\nabla S_{j-1} u^{(2)}\|_{L^\infty} \|\nabla \Delta_j \theta\|_{L^2} \\
&\le& C\, \|x \Phi_0(x)\|_{L^1} \,\|\omega^{(2)}\|_{B^{0,\gamma}_{\infty,1}} \| \Delta_j \theta\|_{L^2}.
\end{eqnarray*}
For $j\ge j_0$ with $j_0= 2$, we apply Berstein's inequality to obtain
\begin{eqnarray*}
\|K_2\|_{L^2} &\le& C\, \|\Delta_j u^{(2)}\|_{L^\infty} \, \|\nabla \Delta_j \theta\|_{L^2} \\
&\le& C\, \|\Delta_j \nabla u^{(2)}\|_{L^\infty} \, \|\Delta_j \theta\|_{L^2}\\
&\le& C\, \|\omega^{(2)}\|_{B^{0,\gamma}_{\infty,1}} \| \Delta_j \theta\|_{L^2}.
\end{eqnarray*}
Again, for $j\ge j_0$ with $j_0= 2$, we have
\begin{eqnarray*}
\|K_4\|_{L^2} &\le& C\, \|\Delta_j u^{(2)}\|_{L^\infty} \, \|S_{j-1} \nabla \theta\|_{L^2} \\
&\le& C\, 2^j\, \|\Delta_j \nabla u^{(2)}\|_{L^\infty} \, \sum_{m\le j-2} 2^{2(m-j)} 2^{-m} \|\Delta_m \theta\|_{L^2} \\
&\le& C\, 2^j\, \|\omega^{(2)}\|_{B^{0,\gamma}_{\infty,1}}  \|\theta\|_{B^{-1}_{2,\infty}}.
\end{eqnarray*}
\begin{eqnarray*}
\|K_5\|_{L^2} &\le& C\, 2^j \sum_{k\ge j-1} \|\Delta_k u^{(2)}\|_{L^\infty} \, \|\Delta_k \theta\|_{L^2} \\
&\le& C\, 2^j \sum_{k\ge j-1} 2^{-k} \|\Delta_k \nabla u^{(2)}\|_{L^\infty} \|\Delta_k \theta\|_{L^2} \\
&\le& C\, 2^j  \|\omega^{(2)}\|_{B^{0,\gamma}_{\infty,1}}  \|\theta\|_{B^{-1}_{2,\infty}}.
\end{eqnarray*}
Inserting the estimates in (\ref{mmm}), we find
\begin{eqnarray*}
\frac{d}{dt} \|\Delta_j \theta\|_{L^2} \le C\, 2^j \,\|v\|_{L^2}\, \|\theta^{(1)}\|_{B^{0,\gamma}_{\infty,1}}+ C\, 2^j  \|\omega^{(2)}\|_{B^{0,\gamma}_{\infty,1}}  \|\theta\|_{B^{-1}_{2,\infty}}.
\end{eqnarray*}
Integrating in time leads to
\begin{eqnarray*}
2^{-j}\|\Delta_j \theta(t)\|_{L^2} &\le& 2^{-j}\|\Delta_j \theta(0)\|_{L^2} + \, C\, \int_0^t \|v(s)\|_{L^2}\, \|\theta^{(1)}(s)\|_{B^{0,\gamma}_{\infty,1}} \,ds\\
&&  + \,C\,\int_0^t \|\omega^{(2)}(s)\|_{B^{0,\gamma}_{\infty,1}}  \|\theta(s)\|_{B^{-1}_{2,\infty}}\,ds.
\end{eqnarray*}
Taking the supremum with respect to $j$ yields (\ref{ggi}).
\end{proof}

\vskip .1in
\begin{lemma} \label{ggo2}
Assume that $v$ satisfies
\begin{equation} \label{veq}
\partial_t v + u^{(2)}\cdot\nabla v + u\cdot \nabla v^{(1)} + \sum_{j=1}^2 \left(u_j^{(2)} \nabla v_j + u_j \nabla v_j^{(1)}\right) + \Lambda v = -\nabla p + \theta e_2,
\end{equation}
Then
\begin{eqnarray*}
\|v(t)\|_{B^0_{2,\infty}} &\le& \|v(0)\|_{B^0_{2,\infty}}
+\|\theta(s)\|_{B^{-1}_{2,\infty}} \\
&& + \,C\, \int_0^t \|v(s)\|_{L^2} \, \left(\|\omega^{(1)}(s)\|_{B^{0,\gamma}_{\infty,1}} +\|\omega^{(2)}(s)\|_{B^{0,\gamma}_{\infty,1}}\right)\,ds.
\end{eqnarray*}
\end{lemma}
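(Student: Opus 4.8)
The plan is to carry out a Littlewood--Paley energy estimate on \eqref{veq} frequency block by frequency block and then take the supremum over $j$ to recover the $B^0_{2,\infty}$ norm of $v$, following the scheme of Lemma \ref{ggo1} but now exploiting both the dissipation $\Lambda v$ and the fact that $v$, $v^{(1)}$ and $u^{(2)}$ all sit one derivative below the vorticities. First I would apply $\Delta_j$ to \eqref{veq}, pair with $\Delta_j v$ in $L^2$, and dispose of two terms at once: the pressure drops out because $\int \Delta_j v\cdot\nabla\Delta_j p = -\int(\nabla\cdot\Delta_j v)\,\Delta_j p = 0$ by $\nabla\cdot v=0$, and the dissipative term obeys the lower bound $\int\Delta_j v\cdot\Lambda\Delta_j v\ge c\,2^j\|\Delta_j v\|_{L^2}^2$. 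The buoyancy term is estimated by $\|\Delta_j v\|_{L^2}\|\Delta_j\theta\|_{L^2}$ with $\|\Delta_j\theta\|_{L^2}\le 2^j\|\theta\|_{B^{-1}_{2,\infty}}$; the extra $2^j$ here is precisely what the dissipation can absorb, and after integrating the resulting scalar inequality in time this is the mechanism producing the $\|\theta\|_{B^{-1}_{2,\infty}}$ contribution in the statement.

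The heart of the matter is the four advection/stretching terms. The transport term $u^{(2)}\cdot\nabla v$ I would treat with a commutator splitting $K_1,\dots,K_5$ as in Lemma \ref{ggo1}: the diagonal piece $S_j u^{(2)}\cdot\nabla\Delta_j v$ integrates to zero against $\Delta_j v$ by $\nabla\cdot u^{(2)}=0$, while the commutator kernel carries a factor $2^{-j}$ that cancels the derivative on $v$, so that only $\|\nabla S_{j-1}u^{(2)}\|_{L^\infty}\le C\|\omega^{(2)}\|_{B^{0,\gamma}_{\infty,1}}$ remains. The genuinely delicate term is the stretching term $\sum_i u_i^{(2)}\nabla v_i$, which carries a gradient on the \emph{difference} $v$. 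Here the key algebraic observation is $u_i^{(2)}\partial_l v_i = \partial_l(u_i^{(2)}v_i) - (\partial_l u_i^{(2)})v_i$: after applying $\Delta_j$ and pairing with $\Delta_j v_l$, summation over $l$ turns the first piece into an integral against $\nabla\cdot\Delta_j v = 0$ and hence it vanishes, leaving $-\int \Delta_j v_l\,\Delta_j((\partial_l u_i^{(2)})v_i)$, which no longer differentiates $v$ and is bounded by $\|v\|_{L^2}\|\omega^{(2)}\|_{B^{0,\gamma}_{\infty,1}}$. The remaining production terms $u\cdot\nabla v^{(1)}$ and $\sum_i u_i\nabla v_i^{(1)}$ place the gradient on the reference field $v^{(1)}=\nabla^\perp\Delta^{-1}\omega^{(1)}$, so $\nabla v^{(1)}$ is a zeroth-order operator on $\omega^{(1)}$ and no derivative is lost; a routine paraproduct split bounds them by $\|v\|_{L^2}\|\omega^{(1)}\|_{B^{0,\gamma}_{\infty,1}}$.

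Throughout I must carry the logarithmic weight implicit in $u^{(j)}=(\log(I-\Delta))^\gamma v^{(j)}$, which follows from $\nabla\times v^{(j)}=\omega^{(j)}$ and yields the two bounds $\|S_{j-1}u\|_{L^2}\le C(1+|j|)^\gamma\|v\|_{L^2}$ and $\|\Delta_m\nabla u^{(2)}\|_{L^\infty}\le C(1+|m|)^\gamma\|\Delta_m\omega^{(2)}\|_{L^\infty}$ already used in Lemma \ref{ggo1}. The strategy is to route every such weight and every frequency summation onto the reference vorticities, summing them into $\|\omega^{(1)}\|_{B^{0,\gamma}_{\infty,1}}$ and $\|\omega^{(2)}\|_{B^{0,\gamma}_{\infty,1}}$, while the difference is always pulled out cleanly through $\|\Delta_k v\|_{L^2}\le\|v\|_{L^2}$ (the Besov embeddings of Lemma \ref{ggd} handle the passage between the weighted norms). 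Crucially, since $u^{(2)}$ and $v^{(1)}$ are one derivative smoother than $\omega^{(2)}$ and $\omega^{(1)}$, every gradient landing on $v$ or $v^{(1)}$ is compensated and no net power of $2^j$ survives in the nonlinear terms; this is what lets the estimate close at the $B^0_{2,\infty}$ level with a bare $\|v\|_{L^2}$ on the right, leaving the single $2^j$ from the buoyancy term to be neutralised by the $\Lambda v$ dissipation. Collecting the block estimates gives, for each $j\ge -1$, $\frac{d}{dt}\|\Delta_j v\|_{L^2}+c\,2^j\|\Delta_j v\|_{L^2}\le C\,\|v\|_{L^2}(\|\omega^{(1)}\|_{B^{0,\gamma}_{\infty,1}}+\|\omega^{(2)}\|_{B^{0,\gamma}_{\infty,1}})+2^j\|\theta\|_{B^{-1}_{2,\infty}}$, and integrating in time and taking the supremum over $j$ yields the asserted inequality.

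I expect the main obstacle to be the stretching term $\sum_i u_i^{(2)}\nabla v_i$: unlike any term in Lemma \ref{ggo1}, it differentiates the unknown difference $v$ directly, and the estimate would fail at the $B^0_{2,\infty}$ level were the derivative kept. The identity $u_i^{(2)}\partial_l v_i=\partial_l(u_i^{(2)}v_i)-(\partial_l u_i^{(2)})v_i$ together with $\nabla\cdot v=0$ is precisely the device that removes it; checking that the same incompressibility cancellation also disposes of the pressure, and that the logarithmic weights land on $\omega^{(1)},\omega^{(2)}$ rather than on $v$, is the bookkeeping one must get exactly right in order to deliver the clean factor $\|v\|_{L^2}$ on which the Osgood argument in Theorem \ref{major1uni} depends.
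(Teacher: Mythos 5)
Your proposal follows essentially the same route as the paper: a blockwise $L^2$ energy estimate on $\Delta_k v$, the $K_1$--$K_5$ commutator splitting for $u^{(2)}\cdot\nabla v$, the product-rule/divergence-free manipulation turning $\sum_i u_i^{(2)}\nabla v_i$ into $\sum_i v_i\nabla u_i^{(2)}$ (which is exactly the paper's integration by parts for $L_3$), the bound $\|\Delta_k\theta\|_{L^2}\le 2^k\|\theta\|_{B^{-1}_{2,\infty}}$ with the factor $2^k$ absorbed by the dissipation after time integration, and the routing of the logarithmic weights onto $\omega^{(1)},\omega^{(2)}$. The argument is correct and matches the paper's proof in all essential respects.
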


\begin{proof}
Let $k\ge -1$. After applying $\Delta_k$ to (\ref{veq}), taking the inner product with $\Delta_k v$ and integrating by parts, we find
\begin{eqnarray} \label{mdm}
\frac12 \frac{d}{dt}\|\Delta_k v\|_{L^2}^2 + 2^k \|\Delta_k v\|_{L^2}^2 = L_1 + L_2 + L_3 +L_4 + L_5,
\end{eqnarray}
where
\begin{eqnarray*}
&& L_1 = -\int \Delta_k v \cdot \Delta_k (u^{(2)}\cdot\nabla v) , \qquad L_2 = -\int \Delta_k v \cdot \Delta_k (u\cdot \nabla v^{(1)}), \\
&& L_3 = -\sum_{j=1}^2\int \Delta_k v \cdot \Delta_k (u_j^{(2)} \nabla v_j), \qquad L_4 = -\sum_{j=1}^2\int \Delta_k v \cdot \Delta_k (u_j \nabla v_j^{(1)}),\\
&& L_5 = -\int \Delta_k v_2 \cdot \Delta_k \theta.
\end{eqnarray*}
To estimate $L_1$, we decompose $\Delta_k (u^{(2)}\cdot\nabla v)$ as in (\ref{ddee}) and bound the components in a similar fashion as in the proof of Lemma \ref{ggo1}. We obtain after applying H\"{o}lder's inequality
$$
|L_1| \le C\, \|\Delta_k v\|_{L^2} \, \|v\|_{L^2} \|\omega^{(2)}\|_{B^{0,\gamma}_{\infty,1}}
$$
To handle $L_2$, we decompose $\Delta_k (u\cdot \nabla v^{(1)})$ as in (\ref{dd00}) and obtain
$$
|L_2| \le C\, \|\Delta_k v\|_{L^2} \, \|v\|_{L^2} \, \|\omega^{(1)}\|_{B^{0,\gamma}_{\infty,1}}.
$$
For $L_3$, we integrate by parts and use the divergence-free condition to obtain
$$
L_3 = \sum_{j=1}^2\int \Delta_k v \cdot \Delta_k (v_j  \nabla u_j^{(2)}).
$$
Decomposing $\Delta_k (v_j  \nabla u_j^{(2)})$ as in (\ref{dd00}) and estimate the resulting components as in the proof of Lemma \ref{ggo1}, we obtain
$$
|L_3| \le C\, \|\Delta_k v\|_{L^2} \, \|v\|_{L^2} \, \|\omega^{(2)}\|_{B^{0,\gamma}_{\infty,1}}.
$$
Clearly $L_4$ admits the same bound as $L_2$. $L_5$ can be bounded by applying H\"{o}lder's inequality
$$
|L_5| \le \|\Delta_k v\|_{L^2} \, \|\Delta_k \theta\|_{L^2} \, \le 2^k \|\Delta_k v\|_{L^2} \, \|\theta\|_{B^{-1}_{2,\infty}}.
$$
Inserting the estimates above in (\ref{mdm}), we find
$$
\frac{d}{dt}\|\Delta_k v\|_{L^2} + 2^k \|\Delta_k v\|_{L^2}
\le C\, \|v\|_{L^2} \, \left(\|\omega^{(1)}\|_{B^{0,\gamma}_{\infty,1}}
+ \|\omega^{(2)}\|_{B^{0,\gamma}_{\infty,1}}\right)
+ 2^k  \|\theta\|_{B^{-1}_{2,\infty}}.
$$
Integrating in time yields
\begin{eqnarray*}
\|\Delta_k v(t)\|_{L^2} &\le&  e^{-2^k t} \|\Delta_k v(0)\|_{L^2} + \int_0^t e^{-2^k (t-s)} 2^k  \|\theta(s)\|_{B^{-1}_{2,\infty}}\,ds  \\
&& + \,C \int_0^t e^{-2^k (t-s)} \|v(s)\|_{L^2} \, \left(\|\omega^{(1)}(s)\|_{B^{0,\gamma}_{\infty,1}} +\|\omega^{(2)}(s)\|_{B^{0,\gamma}_{\infty,1}}\right)\,ds.
\end{eqnarray*}
Therefore,
\begin{eqnarray*}
\|v(t)\|_{B^0_{2,\infty}} &\le& \|v(0)\|_{B^0_{2,\infty}} +\|\theta(s)\|_{B^{-1}_{2,\infty}} \\
&& + \,C\, \int_0^t \|v(s)\|_{L^2} \, \left(\|\omega^{(1)}(s)\|_{B^{0,\gamma}_{\infty,1}} +\|\omega^{(2)}(s)\|_{B^{0,\gamma}_{\infty,1}}\right)\,ds.
\end{eqnarray*}
This completes the proof of Lemma \ref{ggo2}.
\end{proof}

\vskip .4in
\appendix

\section{Besov spaces and Osgood inequality}
\label{Besov}

This appendix provides the definitions of some of the function spaces and related facts used in the previous sections. In addition, the Osgood inequality used in Sectioon \ref{uniq} is also provided here for the convenience of readers. Materials presented in this appendix can be found in several books and many papers (see, e.g., \cite{BCD,BL,RS,Tri}).

\vskip .1in
We start with several notation. $\mathcal{S}$ denotes
the usual Schwarz class and ${\mathcal S}'$ its dual, the space of
tempered distributions. ${\mathcal S}_0$ denotes a subspace of ${\mathcal
S}$ defined by
$$
{\mathcal S}_0 = \left\{ \phi\in {\mathcal S}: \,\, \int_{\mathbb{R}^d}
\phi(x)\, x^\gamma \,dx =0, \,|\gamma| =0,1,2,\cdots \right\}
$$
and ${\mathcal S}_0'$ denotes its dual. ${\mathcal S}_0'$ can be identified
as
$$
{\mathcal S}_0' = {\mathcal S}' / {\mathcal S}_0^\perp = {\mathcal S}' /{\mathcal P}
$$
where ${\mathcal P}$ denotes the space of multinomials.

\vspace{.1in} To introduce the Littlewood-Paley decomposition, we
write for each $j\in \mathbb{Z}$
\begin{equation}\label{aj}
A_j =\left\{ \xi \in \mathbb{R}^d: \,\, 2^{j-1} \le |\xi| <
2^{j+1}\right\}.
\end{equation}
The Littlewood-Paley decomposition asserts the existence of a
sequence of functions $\{\Phi_j\}_{j\in {\Bbb Z}}\in {\mathcal S}$ such
that
$$
\mbox{supp} \widehat{\Phi}_j \subset A_j, \qquad
\widehat{\Phi}_j(\xi) = \widehat{\Phi}_0(2^{-j} \xi)
\quad\mbox{or}\quad \Phi_j (x) =2^{jd} \Phi_0(2^j x),
$$
and
$$
\sum_{j=-\infty}^\infty \widehat{\Phi}_j(\xi) = \left\{
\begin{array}{ll}
1&,\quad \mbox{if}\,\,\xi\in {\Bbb R}^d\setminus \{0\},\\
0&,\quad \mbox{if}\,\,\xi=0.
\end{array}
\right.
$$
Therefore, for a general function $\psi\in {\mathcal S}$, we have
$$
\sum_{j=-\infty}^\infty \widehat{\Phi}_j(\xi)
\widehat{\psi}(\xi)=\widehat{\psi}(\xi) \quad\mbox{for $\xi\in {\Bbb
R}^d\setminus \{0\}$}.
$$
In addition, if $\psi\in {\mathcal S}_0$, then
$$
\sum_{j=-\infty}^\infty \widehat{\Phi}_j(\xi)
\widehat{\psi}(\xi)=\widehat{\psi}(\xi) \quad\mbox{for any $\xi\in
{\Bbb R}^d $}.
$$
That is, for $\psi\in {\mathcal S}_0$,
$$
\sum_{j=-\infty}^\infty \Phi_j \ast \psi = \psi
$$
and hence
$$
\sum_{j=-\infty}^\infty \Phi_j \ast f = f, \qquad f\in {\mathcal S}_0'
$$
in the sense of weak-$\ast$ topology of ${\mathcal S}_0'$. For
notational convenience, we define
\begin{equation}\label{del1}
\Delta_j f = \Phi_j \ast f, \qquad j \in {\Bbb Z}.
\end{equation}

\begin{define}
For $s\in {\Bbb R}$ and $1\le p,q\le \infty$, the homogeneous Besov
space $\mathring{B}^s_{p,q}$ consists of $f\in {\mathcal S}_0' $
satisfying
$$
\|f\|_{\mathring{B}^s_{p,q}} \equiv \|2^{js} \|\Delta_j
f\|_{L^p}\|_{l^q} <\infty.
$$
\end{define}

\vspace{.1in}
We now choose $\Psi\in {\mathcal S}$ such that
$$
\widehat{\Psi} (\xi) = 1 - \sum_{j=0}^\infty \widehat{\Phi}_j (\xi),
\quad \xi \in {\Bbb R}^d.
$$
Then, for any $\psi\in {\mathcal S}$,
$$
\Psi \ast \psi + \sum_{j=0}^\infty \Phi_j \ast \psi =\psi
$$
and hence
\begin{equation}\label{sf}
\Psi \ast f + \sum_{j=0}^\infty \Phi_j \ast f =f
\end{equation}
in ${\mathcal S}'$ for any $f\in {\mathcal S}'$. To define the inhomogeneous Besov space, we set
\begin{equation} \label{del2}
\Delta'_j f = \left\{
\begin{array}{ll}
0,&\quad \mbox{if}\,\,j\le -2, \\
\Psi\ast f,&\quad \mbox{if}\,\,j=-1, \\
\Phi_j \ast f, &\quad \mbox{if} \,\,j=0,1,2,\cdots.
\end{array}
\right.
\end{equation}
\begin{define}
The inhomogeneous Besov space $B^s_{p,q}$ with $1\le p,q \le \infty$
and $s\in {\Bbb R}$ consists of functions $f\in {\mathcal S}'$
satisfying
$$
\|f\|_{B^s_{p,q}} \equiv \|2^{js} \|\Delta'_j f\|_{L^p} \|_{l^q}
<\infty.
$$
\end{define}

\vskip .1in
The Besov spaces $\mathring{B}^s_{p,q}$ and $B^s_{p,q}$ with  $s\in (0,1)$ and $1\le p,q\le \infty$ can be equivalently defined by the norms
$$
\|f\|_{\mathring{B}^s_{p,q}}  = \left(\int_{\mathbb{R}^d} \frac{(\|f(x+t)-f(x)\|_{L^p})^q}{|t|^{d+sq}} dt\right)^{1/q},
$$
$$
\|f\|_{B^s_{p,q}}  = \|f\|_{L^p} + \left(\int_{\mathbb{R}^d} \frac{(\|f(x+t)-f(x)\|_{L^p})^q}{|t|^{d+sq}} dt\right)^{1/q}.
$$
When $q=\infty$, the expressions are interpreted in the normal way. Sometimes it is also necessary to generalize the Besov spaces to include an algebraic part of the modes.
\begin{define}
For $s, \gamma\in {\Bbb R}$ and $1\le p,q \le \infty$, the generalized Besov spaces $\mathring{B}^{s,\gamma}_{p,q}$ and $B^{s,\gamma}_{p,q}$ are defined by
$$
\|f\|_{\mathring{B}^{s,\gamma}_{p,q}} \equiv \|2^{js} (1+|j|)^\gamma \|\Delta_j f\|_{L^p} \|_{l^q}
<\infty,
$$
$$
\|f\|_{B^{s,\gamma}_{p,q}} \equiv \|2^{js} (1+|j|)^\gamma \|\Delta'_j f\|_{L^p} \|_{l^q}
<\infty.
$$
\end{define}

\vskip .1in
We have also used the space-time spaces defined below.
\begin{define}
For $t>0$, $s, \gamma\in {\Bbb R}$ and $1\le p,q,r \le \infty$, the space-time spaces
$\widetilde{L}^r_t \mathring{B}^{s,\gamma}_{p,q}$ and $\widetilde{L}^r_t B^{s,\gamma}_{p,q}$ are defined though the norms
$$
\|f\|_{\widetilde{L}^r_t\mathring{B}^{s,\gamma}_{p,q}} \equiv \|2^{js} (1+|j|)^\gamma \|\Delta_j f\|_{L^r_t L^p} \|_{l^q},
$$
$$
\|f\|_{\widetilde{L}^r_t B^{s,\gamma}_{p,q}} \equiv \|2^{js} (1+|j|)^\gamma \|\Delta'_j f\|_{L^r_t L^p} \|_{l^q}.
$$
\end{define}
These spaces are related to the classical space-time spaces $L^r_t \mathring{B}^{s,\gamma}_{p,q}$ and $L^r_t B^{s,\gamma}_{p,q}$ via the Minkowski inequality.

\vskip .1in
Many frequently used function spaces are special cases of Besov spaces. The following proposition lists some useful equivalence and embedding relations.
\begin{prop}
For any $s\in \mathbb{R}$,
$$
\mathring{H}^s \sim \mathring{B}^s_{2,2}, \quad H^s \sim B^s_{2,2}.
$$
For any $s\in \mathbb{R}$ and $1<q<\infty$,
$$
\mathring{B}^{s}_{q,\min\{q,2\}} \hookrightarrow \mathring{W}_{q}^s \hookrightarrow \mathring{B}^{s}_{q,\max\{q,2\}}.
$$
In particular, $\mathring{B}^{0}_{q,\min\{q,2\}} \hookrightarrow L^q \hookrightarrow \mathring{B}^{0}_{q,\max\{q,2\}}$.
\end{prop}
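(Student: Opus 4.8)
The plan is to dispatch the two $L^2$ equivalences by a direct Plancherel computation and to reduce the $L^q$ embeddings to the Littlewood--Paley square function characterization of Sobolev spaces. For $\mathring H^s\sim\mathring B^s_{2,2}$ I would use Plancherel to write $\|\Delta_j f\|_{L^2}^2=\int|\widehat\Phi_0(2^{-j}\xi)|^2|\widehat f(\xi)|^2\,d\xi$. Since $\widehat\Phi_0$ is supported in a fixed annulus, $2^{2js}\sim|\xi|^{2s}$ on $\mathrm{supp}\,\widehat\Phi_j$; summing in $j$ and using that the annuli $A_j$ overlap only finitely (so that $\sum_j|\widehat\Phi_j(\xi)|^2$ is bounded above and below for $\xi\ne0$) gives $\sum_j 2^{2js}\|\Delta_j f\|_{L^2}^2\sim\int|\xi|^{2s}|\widehat f(\xi)|^2\,d\xi=\|f\|_{\mathring H^s}^2$. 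The inhomogeneous statement $H^s\sim B^s_{2,2}$ is the same computation, with the low--frequency block $\Delta'_{-1}f=\Psi\ast f$ absorbing the region $|\xi|\lesssim1$, on which $(1+|\xi|^2)^s\sim1$.

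For the second display, write $\mathring W^s_q$ for the homogeneous Sobolev space with norm $\|f\|_{\mathring W^s_q}=\|\Lambda^s f\|_{L^q}$ and set $g_j=2^{js}\Delta_j f$. The key input is the square function theorem: for $1<q<\infty$,
\[
\|f\|_{\mathring W^s_q}\sim\Big\|\Big(\sum_{j}2^{2js}|\Delta_j f|^2\Big)^{1/2}\Big\|_{L^q}=\|g_j\|_{L^q_x\ell^2_j},
\]
which follows from the Mikhlin--H\"ormander multiplier theorem (equivalently, the $L^q$-boundedness of the Littlewood--Paley $g$-function). This is the one genuinely nontrivial ingredient, and it is precisely where $1<q<\infty$ enters, the square function being unbounded on $L^1$ and $L^\infty$. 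Granting it, the Besov norm is $\|f\|_{\mathring B^s_{q,r}}=\|g_j\|_{\ell^r_j L^q_x}$, so both claimed embeddings become comparisons between the iterated norms $\|g_j\|_{\ell^r_j L^q_x}$ and $\|g_j\|_{L^q_x\ell^2_j}$.

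I would carry out these comparisons with two elementary facts: the nesting $\|\,\cdot\,\|_{\ell^b}\le\|\,\cdot\,\|_{\ell^a}$ for $a\le b$, and Minkowski's inequality for mixed norms, $\|F\|_{L^b_w L^a_z}\le\|F\|_{L^a_z L^b_w}$ whenever $a\le b$ (the smaller exponent placed outside yields the larger norm), valid for Lebesgue and counting measures alike. Splitting into $q\ge2$ (so $\min\{q,2\}=2$, $\max\{q,2\}=q$) and $1<q\le2$ (so $\min\{q,2\}=q$, $\max\{q,2\}=2$), each of the four one--sided inclusions is a single application. For instance, when $q\ge2$ the lower embedding is $\|g_j\|_{L^q_x\ell^2_j}\le\|g_j\|_{\ell^2_j L^q_x}$ by Minkowski, while the upper embedding is $\|g_j\|_{\ell^q_j L^q_x}=\|g_j\|_{L^q_x\ell^q_j}\le\|g_j\|_{L^q_x\ell^2_j}$ by $\ell^2\hookrightarrow\ell^q$; when $1<q\le2$ the two roles are interchanged. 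The ``in particular'' clause is the case $s=0$, for which $\mathring W^0_q=L^q$. Beyond quoting the square function theorem I do not expect any serious difficulty; the only point requiring care is keeping straight, in each of the four cases, which exponent sits inside and which outside, since the inequalities point in opposite directions for $q\ge2$ and for $q\le2$. If a self-contained treatment were demanded, establishing the square function equivalence, hence the underlying multiplier theorem, would be the substantive task.
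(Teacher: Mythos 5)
The paper does not prove this proposition: it is stated in the Appendix as standard material, with the reader referred to the texts cited there (Bahouri--Chemin--Danchin, Bergh--L\"ofstr\"om, Runst--Sickel, Triebel). So there is no in-paper argument to compare against; what can be said is that your proof is the standard textbook one and is correct. The Plancherel computation for $\mathring{H}^s\sim\mathring{B}^s_{2,2}$ and $H^s\sim B^s_{2,2}$ is complete as written (for the lower bound on $\sum_j|\widehat{\Phi}_j(\xi)|^2$ you implicitly use $\sum_j\widehat{\Phi}_j\equiv 1$ together with the finite overlap, via Cauchy--Schwarz --- worth saying explicitly, but not a gap). For the $L^q$ embeddings you correctly isolate the Littlewood--Paley square function theorem as the one nontrivial input, and the reduction of all four one-sided inclusions to Minkowski's inequality for mixed norms and the nesting of $\ell^r$ spaces is carried out with the exponents in the right places in both regimes $q\ge 2$ and $1<q\le 2$. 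The only caveat, which applies equally to the paper's bare statement, is that the homogeneous identifications are to be read modulo polynomials, i.e.\ in $\mathcal{S}'_0$; this does not affect the argument.
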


\vskip .1in
For notational convenience, we write $\Delta_j$ for
$\Delta'_j$. There will be no confusion if we keep in mind that
$\Delta_j$'s associated the homogeneous Besov spaces is defined in
(\ref{del1}) while those associated with the inhomogeneous Besov
spaces are defined in (\ref{del2}). Besides the Fourier localization operators $\Delta_j$, the partial sum $S_j$ is also a useful notation. For an integer $j$,
$$
S_j \equiv \sum_{k=-1}^{j-1} \Delta_k,
$$
where $\Delta_k$ is given by (\ref{del2}). For any $f\in \mathcal{S}'$, the Fourier transform of $S_j f$ is supported on the ball of radius $2^j$.

\vskip .1in
Bernstein's inequalities is a useful tool on Fourier localized functions and these inequalities trade integrability for derivatives. The following proposition provides Bernstein type inequalities for fractional derivatives.
\begin{prop}\label{bern}
Let $\alpha\ge0$. Let $1\le p\le q\le \infty$.
\begin{enumerate}
\item[1)] If $f$ satisfies
$$
\mbox{supp}\, \widehat{f} \subset \{\xi\in \mathbb{R}^d: \,\, |\xi|
\le K 2^j \},
$$
for some integer $j$ and a constant $K>0$, then
$$
\|(-\Delta)^\alpha f\|_{L^q(\mathbb{R}^d)} \le C_1\, 2^{2\alpha j +
j d(\frac{1}{p}-\frac{1}{q})} \|f\|_{L^p(\mathbb{R}^d)}.
$$
\item[2)] If $f$ satisfies
\begin{equation*}\label{spp}
\mbox{supp}\, \widehat{f} \subset \{\xi\in \mathbb{R}^d: \,\, K_12^j
\le |\xi| \le K_2 2^j \}
\end{equation*}
for some integer $j$ and constants $0<K_1\le K_2$, then
$$
C_1\, 2^{2\alpha j} \|f\|_{L^q(\mathbb{R}^d)} \le \|(-\Delta)^\alpha
f\|_{L^q(\mathbb{R}^d)} \le C_2\, 2^{2\alpha j +
j d(\frac{1}{p}-\frac{1}{q})} \|f\|_{L^p(\mathbb{R}^d)},
$$
where $C_1$ and $C_2$ are constants depending on $\alpha,p$ and $q$
only.
\end{enumerate}
\end{prop}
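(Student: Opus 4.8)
The plan is to realize the multiplier $(-\Delta)^\alpha$ as convolution against an explicit kernel, to track the frequency parameter $j$ by a scaling change of variables, and then to apply Young's convolution inequality. Throughout I set $s=2\alpha$, so that $\widehat{(-\Delta)^\alpha f}(\xi)=|\xi|^s\widehat f(\xi)$, and I reduce everything to estimating an $L^r$ norm of a fixed kernel.

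For part 1), since $\widehat f$ is supported in $\{|\xi|\le K2^j\}$, I would pick $\phi\in C_c^\infty(\mathbb{R}^d)$ with $\phi\equiv1$ on $\{|\xi|\le K\}$ and insert the cutoff $\phi(2^{-j}\xi)$, which equals $1$ on the support of $\widehat f$. Setting $\psi(\xi)=|\xi|^s\phi(\xi)$ and using homogeneity, $|\xi|^s\phi(2^{-j}\xi)=2^{sj}\psi(2^{-j}\xi)$, so that $(-\Delta)^\alpha f=g_j\ast f$ with $g_j(x)=2^{sj}2^{jd}h(2^jx)$ and $h=\mathcal{F}^{-1}\psi$. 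A change of variables gives $\|g_j\|_{L^r}=2^{sj+jd(1-\frac1r)}\|h\|_{L^r}$ for every $r\in[1,\infty]$. Choosing $r$ by $1+\frac1q=\frac1r+\frac1p$, equivalently $1-\frac1r=\frac1p-\frac1q$, Young's inequality produces exactly $\|(-\Delta)^\alpha f\|_{L^q}\le 2^{2\alpha j+jd(\frac1p-\frac1q)}\|h\|_{L^r}\,\|f\|_{L^p}$, which is the asserted bound once $h\in L^r$ is known. The admissible range $r\in[1,\infty]$ follows from $p\le q$.

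The step I expect to be the main obstacle is the bound $h\in L^r$ for all $1\le r\le\infty$; the endpoint $r=1$, forced when $p=q$, is the binding case and amounts to $h\in L^1$. Since $\psi$ is compactly supported, $h$ is bounded, so the issue is purely the decay of $h$ at infinity, which is limited only by the non-smoothness of $|\xi|^s$ at $\xi=0$. I would establish $|h(x)|\lesssim|x|^{-d-s}$ as $|x|\to\infty$: this is the standard decay rate for the Fourier transform of a homogeneous symbol $|\xi|^s$ localized by a smooth bump, the exponent $-d-s$ being governed by the order of the singularity at the origin. Because $s=2\alpha>0$, this decay is faster than $|x|^{-d}$ and hence integrable, giving $h\in L^1$; together with $h\in L^\infty$ this yields $h\in L^r$ for every $r$. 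When $\alpha=0$ the symbol $\psi=\phi$ is smooth, $h$ is Schwartz, and the claim is immediate.

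Finally, for part 2) the annulus support lets me replace $\phi$ by $\Phi\in C_c^\infty$ supported in $\{K_1/2\le|\xi|\le 2K_2\}$ and equal to $1$ on $\{K_1\le|\xi|\le K_2\}$; now $|\xi|^s$ is smooth on the relevant region, the corresponding kernel is Schwartz, and the upper bound follows verbatim with no origin issue. For the lower bound I would run the argument in reverse: on the annulus I write $f=\tilde g_j\ast(-\Delta)^\alpha f$, where $\tilde g_j$ is the kernel of the smooth multiplier $|\xi|^{-s}\Phi(2^{-j}\xi)$, and the same scaling computation gives $\|\tilde g_j\|_{L^1}\le C2^{-sj}$. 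Young's inequality with $r=1$ then yields $\|f\|_{L^q}\le C2^{-2\alpha j}\|(-\Delta)^\alpha f\|_{L^q}$, which is the desired lower bound and completes the plan.
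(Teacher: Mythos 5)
Your proposal is correct. Note that the paper itself offers no proof of this proposition: it is stated in the appendix as a standard fact, with the reader referred to the books cited there (Bahouri--Chemin--Danchin, Bergh--L\"ofstr\"om, etc.), so there is no in-paper argument to compare against. Your argument is the canonical one: realize $(-\Delta)^\alpha$ composed with a frequency cutoff as convolution with a rescaled kernel, compute $\|g_j\|_{L^r}=2^{2\alpha j+jd(1-\frac1r)}\|h\|_{L^r}$ by scaling, and conclude by Young's inequality with $1-\frac1r=\frac1p-\frac1q$. You correctly identify the only genuinely delicate point, namely that in part 1) the symbol $|\xi|^{2\alpha}\phi(\xi)$ is not smooth at the origin when $2\alpha$ is not an even integer, so $h=\mathcal{F}^{-1}\bigl(|\xi|^{2\alpha}\phi\bigr)$ is not Schwartz and its membership in $L^1$ must be argued separately; the decay $|h(x)|\lesssim |x|^{-d-2\alpha}$ (obtained, e.g., by a dyadic decomposition of the symbol near $\xi=0$ and integration by parts on each piece) is exactly what is needed, and it is integrable precisely because $2\alpha>0$. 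The reverse inequality in part 2) via the smooth, origin-avoiding multiplier $|\xi|^{-2\alpha}\Phi(2^{-j}\xi)$ is likewise the standard route. The only cosmetic caveat is that your constants also depend on $d$ and on $K$ (resp.\ $K_1,K_2$), not just on $\alpha,p,q$ as the statement's phrasing suggests, but that is an imprecision inherited from the proposition itself rather than a gap in your argument.
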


\vskip .1in
Finally we recall the Osgood inequality.
\begin{prop}\label{gronwall-2}
Let $\alpha(t)>0$ be a locally integrable function. Assume
$\omega(t)\ge 0$ satisfies
$$
\int_0^\infty \frac{1}{\omega(r)} dr = \infty.
$$
Suppose that $\rho(t)>0$ satisfies
$$
\rho(t)\le a +\int_{t_0}^t \alpha(s)\omega(\rho(s)) ds
$$
for some constant $a\ge 0$. Then if $a=0$, then $\rho\equiv 0$; if
$a>0$, then
$$
-\Omega(\rho(t))+\Omega(a)\le \int_{t_0}^t \alpha(\tau) d\tau,
$$
where
$$
\Omega(x)=\int_x^1 \frac{dr}{\omega(r)}.
$$
\end{prop}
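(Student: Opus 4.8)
The plan is to follow the classical comparison argument for Osgood's lemma, reducing everything to a single differential inequality and then treating the borderline case $a=0$ by a limiting argument. First I would assume, as is standard for a modulus of continuity, that $\omega$ is nondecreasing and strictly positive on $(0,\infty)$, and introduce the majorant
$$
R(t) = a + \int_{t_0}^t \alpha(s)\,\omega(\rho(s))\,ds,
$$
so that $\rho(t)\le R(t)$ for all $t\ge t_0$ and $R(t_0)=a$. Since $\alpha$ is locally integrable and $s\mapsto \omega(\rho(s))$ is nonnegative and locally integrable, $R$ is absolutely continuous and nondecreasing, with $R'(t)=\alpha(t)\,\omega(\rho(t))$ for a.e.\ $t$.

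Next I would exploit the function $\Omega(x)=\int_x^1 dr/\omega(r)$, which is $C^1$ and strictly decreasing where $\omega$ is continuous and positive, with $\Omega'(x)=-1/\omega(x)$. Treating $a>0$ first, one has $R(t)\ge a>0$, so $\Omega\circ R$ is absolutely continuous and, using $\rho(t)\le R(t)$ together with the monotonicity of $\omega$,
$$
\frac{d}{dt}\Omega(R(t)) = -\,\alpha(t)\,\frac{\omega(\rho(t))}{\omega(R(t))} \ge -\,\alpha(t)
$$
for a.e.\ $t$. Integrating from $t_0$ to $t$ and recalling $R(t_0)=a$ gives $\Omega(a)-\Omega(R(t))\le \int_{t_0}^t \alpha(\tau)\,d\tau$. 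Because $\Omega$ is decreasing and $\rho(t)\le R(t)$, we have $\Omega(\rho(t))\ge \Omega(R(t))$, hence $-\Omega(\rho(t))\le -\Omega(R(t))$, and therefore
$$
-\Omega(\rho(t)) + \Omega(a) \le \int_{t_0}^t \alpha(\tau)\,d\tau,
$$
which is exactly the claimed estimate for $a>0$.

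Finally, for the case $a=0$ I would argue by a limiting procedure: for any $\epsilon>0$ the hypothesis with $a$ replaced by $\epsilon$ still holds, since $\rho(t)\le 0+\int_{t_0}^t \alpha\,\omega(\rho)\le \epsilon+\int_{t_0}^t \alpha\,\omega(\rho)$, so the $a>0$ conclusion yields $\Omega(\epsilon)-\Omega(\rho(t))\le \int_{t_0}^t \alpha(\tau)\,d\tau$. Suppose, toward a contradiction, that $\rho(t_1)>0$ for some $t_1>t_0$; then $\Omega(\rho(t_1))$ is a finite number and $\int_{t_0}^{t_1}\alpha$ is finite, whereas the divergence hypothesis forces $\Omega(\epsilon)=\int_\epsilon^1 dr/\omega(r)\to+\infty$ as $\epsilon\to 0^+$. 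Letting $\epsilon\to0^+$ then sends the left-hand side to $+\infty$ while the right-hand side stays finite, a contradiction; hence $\rho\equiv 0$. I expect the only genuinely delicate point to be this last step: the whole uniqueness conclusion hinges on the blow-up $\Omega(0^+)=+\infty$, i.e.\ on the divergence of $\int dr/\omega(r)$ at the lower endpoint (the regime relevant to the application in Theorem~\ref{major1uni}, where the unknown $\rho$ stays bounded), so care is needed to record that it is the behavior of $\omega$ near $0$, and not near $\infty$, that is being invoked. A secondary technical point is the justification of the chain rule for $\Omega\circ R$, which follows from the absolute continuity of $R$ and the local Lipschitz character of $\Omega$ away from $0$.
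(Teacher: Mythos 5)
Your proof is correct, and a direct comparison is not possible here: the paper offers no proof of Proposition~\ref{gronwall-2} at all --- it is recalled in the appendix as a classical fact, with pointers to the standard references. Your argument is precisely the canonical comparison proof of Osgood's lemma (as in Bahouri--Chemin--Danchin): majorize $\rho$ by the absolutely continuous function $R(t)=a+\int_{t_0}^t\alpha(s)\,\omega(\rho(s))\,ds$, differentiate $\Omega\circ R$ using $\omega(\rho)\le\omega(R)$, integrate, and handle $a=0$ by applying the $a=\epsilon>0$ case and letting $\epsilon\to0^+$; all the steps check out, including the chain-rule justification via the local Lipschitz character of $\Omega$ on $(0,\infty)$. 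Two of your side remarks are genuinely valuable and worth recording. First, the proof does require $\omega$ to be nondecreasing (and positive and continuous on $(0,\infty)$), since the comparison step $\omega(\rho(t))\le\omega(R(t))$ fails otherwise; the paper's statement omits these standard hypotheses, and you were right to make them explicit. Second, you correctly flagged that, read literally, the hypothesis $\int_0^\infty dr/\omega(r)=\infty$ is weaker than what the $a=0$ conclusion needs, namely divergence at the lower endpoint, $\int_0^1 dr/\omega(r)=\infty$: for instance $\omega(r)=\sqrt{r}$ satisfies the literal condition (divergence at infinity) yet $\rho(t)=(t-t_0)^2/4$ with $\alpha\equiv1$ satisfies the integral inequality with $a=0$ and is not identically zero. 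So the intended reading --- the one actually used in the uniqueness argument of Theorem~\ref{major1uni}, where $\rho$ stays bounded --- is divergence of $1/\omega$ at $0$, exactly as you observed. (A minor cosmetic point you could also have noted: the paper's hypothesis $\rho(t)>0$ is in tension with the conclusion $\rho\equiv0$ when $a=0$; the intended assumption is $\rho\ge0$.)
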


\vskip .4in
\section*{Acknowledgements}
Chae's research was partially supported by NRF grant No.2006-0093854.
Wu's research was partially supported by NSF grant DMS 0907913 and the AT\&T
Foundation at Oklahoma State University.

\vskip .4in

\end{document}